\newtheorem{theorem}{Theorem}[section]
\newtheorem{definition}[theorem]{Definition}
\newtheorem{lemma}[theorem]{Lemma}
\newtheorem{corollary}[theorem]{Corollary}
\newtheorem{remark}[theorem]{Remark}
\begin{document}
\title[Optimal r.i.  Sobolev embeddings in mixed norm spaces]{Optimal rearrangement invariant Sobolev embeddings in mixed norm spaces}

\author{Nadia Clavero}
\address{Department of  Applied Mathematics and Analysis, University of Barcelona, Gran Via 585, E-08007 Barcelona, Spain}
\email{nadiaclavero@ub.edu}

\author{Javier Soria}
\address{Department of  Applied Mathematics and Analysis, University of Barcelona, Gran Via 585, E-08007 Barcelona, Spain}
\email{soria@ub.edu}

\subjclass[2010]{26D15, 28A35, 46E30}
\keywords{Rearrangement invariant spaces; mixed norm spaces; embeddings; Lorentz spaces}
\thanks{}

\subjclass[2010]{28A35, 46E30, 46E35}
\keywords{Sobolev embeddings; rearrangement-invariant spaces; Hardy operator; optimal
range; optimal domain.}

\thanks{Both authors have been partially supported by the  grants  MTM2013-40985-P (Spanish Government) and 2014SGR289 (Catalan Autonomous Government).}

\begin{abstract}
We improve the  Sobolev-type embeddings
 due to Gagliardo \cite{Gagliardo} and  Nirenberg~\cite{Nirenberg}
in the setting of rearrangement invariant   (r.i.) spaces. In particular we concentrate on seeking   the optimal domains
and the optimal ranges for these embeddings between r.i.\ spaces and mixed norm spaces. As a consequence, we prove that  the classical estimate for the standard Sobolev space $W^{1}L^{p}$ by Poornima~\cite{Poornima}, O'Neil~\cite{Oneil-1963} and
 Peetre~\cite{Peetre} ($1 \leq  p < n$), and by
Hansson~\cite{Hansson},  Brezis and Wainger~\cite{Brezis-Wainger} and Maz'ya~\cite{Mazya} ($p=n$)
 can be further strengthened by considering mixed norms on the target spaces.
 
\end{abstract}
\maketitle

\thispagestyle{empty}

\section{Introduction}

Let $n\in\mathbb N,$  with $n\geq 2$, and let $I\subset \mathbb R$ be an interval. The Sobolev space  $W^{1}L^{p}(I^{n}),$ $1\leq p\leq \infty,$  consists of all functions in $L^{p}(I^{n})$ whose first-order distributional derivatives also belong to $L^{p}(I^{n})$.  The classical Sobolev embedding theorem claims that
\begin{align}\label{eq: inclusion sobolev}
W^{1}L^{p}(I^{n})\hookrightarrow L^{pn/(n-p)}(I^{n}),\ \ 1\leq p<n.
\end{align}
 Sobolev \cite{Sobolev} proved this embedding for $p>1,$ but his method, based on integral representations, did not work when $p=1$. That case was settled affirmatively by Gagliardo \cite{Gagliardo} and  Nirenberg~\cite{Nirenberg}, who first  observed that 
\begin{align}\label{eq: Gagliardo-Niremberg}
W^{1}L^{1}(I^{n})\hookrightarrow \mathcal{R}(L^{1},L^{\infty}),
\end{align}
(see Definition~\ref{definition: mixed norm spaces}) and then, using an iterated form of H\"older's inequality,  completed the proof; i.e.,
 \begin{align*}
 W^{1}L^{1}(I^{n})\hookrightarrow \mathcal{R}(L^{1},L^{\infty})\hookrightarrow L^{n^{\prime}}(I^{n}),
\end{align*}
where $n^{\prime}$ denotes the conjugate exponent of $n$, i.e., $1/n+1/n^{\prime}=1$. 

Later, a new approach based on properties of mixed norm spaces was introduced by Fournier~\cite{Fournier} and was subsequently developed, via different
methods, by various authors, including Blei and Fournier~\cite{Blei-Fournier}, Milman \cite{Milman-kfunctional-mixed-norm}, Algervik and Kolyada~\cite{Robert-Viktor} and Kolyada~ \cite{Viktor2012, Viktor2013}. To be more precise, the central part of Fournier's work was  to study embeddings between mixed norm spaces and  Lorentz spaces $L^{p,q}$ (see  Sections~\ref{sec: preliminaries}  for further details on Lorentz spaces). Specifically, he proved that 
\begin{align*}
\mathcal{R}(L^{1},L^{\infty})\hookrightarrow L^{n^{\prime},1}(I^{n}),
\end{align*}
and then taking into account \eqref{eq: Gagliardo-Niremberg}, he obtained the following improvement of \eqref{eq: inclusion sobolev}:
 \begin{align}\label{eq: intro W1L1->Xop}
W^{1}L^{1}(I^{n})\hookrightarrow L^{n^{\prime},1}(I^{n}).
\end{align}
The embedding  \eqref{eq: intro W1L1->Xop} is due to  Poornima~\cite{Poornima}, and it can be also traced in the works of O'Neil~\cite{Oneil-1963} and Peetre~\cite{Peetre}.

 A thorough study of mixed norm spaces has been recently considered in \cite{Clavero-Soria}.  In particular,  extending the mixed norm estimates due to Fournier \cite{Fournier} to more general r.i.\ spaces, we have obtained a description of the largest mixed norm space of the form $\mathcal{R}(X,L^{\infty})$ that is  continuously embedded into  a fixed r.i.\ space.

In recent years, extensions  of \eqref{eq: inclusion sobolev} for more general  rearrangement invariant (r.i.) spaces have been extensively studied   by various authors, including  Edmunds, Kerman and  Pick~\cite{Edmunds-Kerman-Pick}, Kerman and Pick~\cite{Kerman-Pick} and Cianchi~\cite{Cianchi}. To be more specific, Kerman and Pick~\cite{Kerman-Pick}  were interested  on seeking  necessary and sufficient conditions for the  following embeddings involving  r.i.\ spaces to hold:
\begin{align}\label{eq: intro W1Z->Xop}
W^{1}Z(I^{n})\hookrightarrow X^{\textnormal{op}}(I^{n}).
\end{align}
This characterization was then exploited to study the optimal domain-range problems for the embedding \eqref{eq: intro W1Z->Xop}, within the class of r.i.\ spaces.

All these works provide us a strong motivation to   consider  \eqref{eq: Gagliardo-Niremberg} for more general r.i.\ spaces,  as well as to describe the optimal domain and the optimal range for this embedding between r.i.\ spaces and  mixed norm spaces.
 
The paper is organized as follows. In Sections~\ref{sec: preliminaries} and \ref{mixed norm spaces}, we present some basic properties of r.i.\ spaces and mixed norm spaces we shall need for our work. 

 Section~\ref{sec: Sobolev embedding in mixed norm spaces} is devoted to  study the Sobolev embedding of the form
\begin{align}\label{eq: introduction W1Z->R(X,Loo)}
W^{1}Z(I^{n})\hookrightarrow \mathcal{R}(X,L^{\infty}),
\end{align}
extending the classical estimate \eqref{eq: Gagliardo-Niremberg}. Following the ideas of  \cite{Kerman-Pick},  we establish the equivalence between \eqref{eq: introduction W1Z->R(X,Loo)} and the  boundedness of a Hardy type operator (see Theorem~ \ref{theorem: reduccion medida finita}). This relation will be  a key tool to determine the optimal domain and the optimal range for \eqref{eq: introduction W1Z->R(X,Loo)} between r.i.\ spaces and  mixed norm spaces. 

 After this  discussion, our analysis focuses  on giving explicit constructions of such optimal spaces. In particular, Theorem~\ref{teo: W1Z->R(X,Loo) rango}  provides a characterization of the smallest space of the form $\mathcal{R}(X,L^{\infty})$ in \eqref{eq: introduction W1Z->R(X,Loo)}, once the r.i.\ space $Z(I^{n})$ is given.  Finally, for a fixed mixed norm space $\mathcal{R}(X,L^{\infty})$, Theorem~\ref{teo: W1Z->R(X,Loo) dominio} describes the largest  r.i.\ space $Z$ for which \eqref{eq: introduction W1Z->R(X,Loo)} holds.

 All these results are then employed to establish classical Sobolev  embeddings in the context of  mixed norm spaces. Thus, for instance, we recover \eqref{eq: Gagliardo-Niremberg} and, as a new contribution,  we show  that    $\mathcal{R}(L^{1},L^{\infty})$ is  the smallest mixed norm space of the form    $\mathcal{R}(X,L^{\infty})$ satisfying \eqref{eq: Gagliardo-Niremberg}.
  
As we have mentioned before,  the optimal range problem for the Sobolev embedding was studied in \cite{Kerman-Pick} within the class of r.i.\ spaces. In particular, for a fixed r.i.\ domain space $Z(I^{n})$ they determined  the smallest r.i.\ space, namely  $X^{\textnormal{op}}(I^{n}),$ satisfying \eqref{eq: intro W1Z->Xop}.  Motivated by this problem, in Section~\ref{sec: Comparison with the optimal r.i. space}  we compare the optimal r.i.\ range space  with the optimal mixed norm space, and we prove in  Theorem~\ref{theorem: R(X,Loo)->Lubos}    that the  following chain of embeddings holds:
 \begin{align*}
 W^{1}Z(I^{n})\hookrightarrow \mathcal{R}(X,L^{\infty})\hookrightarrow  X^{\textnormal{op}}(I^{n}),
 \end{align*}
with $\mathcal{R}(X,L^{\infty})$   the mixed norm space constructed in Theorem~\ref{teo: W1Z->R(X,Loo) rango}. Consequently, it turns out that it is still  possible to further  improve  the classical Sobolev embeddings by means of mixed norm spaces.  
 
Some remarks about the notation: The measure of the unit ball in $\mathbb R^{n}$ will be represented by $\omega_{n}$. As usual, we  use the symbol $A\lesssim B$ to indicate that there exists a universal positive constant $C$, independent of all
important parameters, such that $A\leq CB$. The equivalence $A\approx B$ means that $A\lesssim B$ and $B\lesssim A$. Finally, the arrow  $\hookrightarrow$ stands for a continuous embedding.

\section{Preliminaries}\label{sec: preliminaries}
We collect in this section some basic notations and results that will be useful in what follows.

Let $n\in\mathbb N,$ with $n\geq 1$ and let $I\subset \mathbb R$ be an interval having Lebesgue measure $|I|=1$. We write  $\mathcal{M}(I^{n})$ for the set of all real-valued measurable functions on $I^{n}$ and $\mathcal{M}_{+}(I^{n})$ for the non-negative ones. 

Given $f\in\mathcal{M}(I^{n}),$ its distribution function $\lambda_{f}$  is defined by
\begin{align*}
\lambda_{f} (t) = |\{x\in I^{n}: |f (x)| > t\}|,\ \ t\geq  0,
\end{align*}
and the decreasing rearrangement $f^{*}$  of $f$ is defined as
\begin{align*}
f^{*}(t) =  \inf\{s\geq 0: \lambda_{f}(s)\leq t\},\ \ t\geq 0.
\end{align*}
It is easily seen that  if $h$ is a nonnegative and decreasing functions on $(0,1)$ then
\begin{align}\label{eq: radial function}
g(x)=h(\omega_{n}|x|^{n}), \text{ a.e. } x \Longrightarrow g^{*}=h^{*}.
\end{align}

As usual, we shall use the notation $f^{**}(t)=t^{-1}\int^{t}_{0}f^{*}(s)ds$. A basic property of rearrangements is the Hardy-Littlewood inequality (cf. e.g. \cite[Theorem II.2.2]{Bennett}), which says: 
\begin{align*}
\int_{I^{n}}|f(x)g(x)|dx\leq \int^{1}_{0}f^{*}(t)g^{*}(t)dt,\ \ f,g\in\mathcal{M}(I^{n}).
\end{align*}

A Banach function norm $\rho$ is a mapping $\rho: \mathcal{M}_{+}(I^{n})\rightarrow [0, \infty]$ such that the following properties hold:
 \begin{enumerate}[ ({A}1)]
	\item $\rho(f)=0 \Leftrightarrow f=0\ \textnormal{a.e.}$, $\rho(f+g)\leq \rho(f)+\rho(g)$, $\rho(\alpha f)=\alpha \rho(f)$, for $\alpha\geq 0$; \vspace{0.2 cm}
	\item if $0\leq g\leq f\ \textnormal{a.e.,}$ then  $ \rho(g)\leq \rho(g)$; \vspace{0.2 cm}
	\item if $0\leq f_{j}\uparrow f\ \textnormal{a.e.,}$ then $\rho(f_{j})\uparrow \rho(f)$;
	\item $\rho(\chi_{I^{n}})<\infty$;  \vspace{0.2 cm}
	\item $\int_{I^{n}}|f(x)|dx\lesssim \rho(f)$.
\end{enumerate}
By means of $\rho$, a Banach function space  $X(I^{n})$ can be defined:
\begin{align*}
X(I^{n})=\Big\{f\in \mathcal{M}(I^{n}): \rho(|f|)<\infty \Big\}.
\end{align*}
For each $f\in X(I^{n})$, we define $\|f\|_{X(I^{n})}=\rho(|f|)$.

A Banach function norm is rearrangement invariant if $\|f\|_{X(I^{n})}=\|g\|_{X(I^{n})},$ for every pair of functions $f$, $g$ which are equimeasurable, that is, $\lambda_{f} = \lambda_{g}$. This means that the norm of a function $f$ in $X(I^{n})$ depends only on its distribution function. In this case, we say that the Banach function space $X(I^{n})$ is rearrangement invariant (briefly an r.i.\ space).

The Lebesgue spaces $L^{p}(I^{n})$, with $1\leq p\leq \infty$, endowed with the standard norm, are the simplest example of r.i.\  spaces.  We shall also work with the Lorentz spaces, defined either for  $p=q=1$ or $p=q=\infty$, or $1<p<\infty$ and $1\leq q\leq \infty$ as 
\begin{align*}
L^{p,q}(I^{n})=\Big\{f\in \mathcal{M}(I^{n}): \|f\|_{L^{p,q}(I^{n})}<\infty\Big\},
\end{align*}
where
\begin{align*}
\|f\|_{L^{p,q}(I^{n})}=\Big\|t^{1/p-1/q}f^{*}(t)\Big\|_{L^{q}(0,1)},
\end{align*}
and, more generally, with the Lorentz-Zygmund spaces, defined for $1\leq p\,,q\leq \infty$ and $\alpha\in \mathbb R$ as
\begin{align*}
L^{p,q;\alpha}(I^{n})=\Big\{f\in \mathcal{M}(I^{n}): \|f\|_{L^{p,q;\alpha}(I^{n})}<\infty\Big\},
\end{align*}
where
\begin{align*}
\|f\|_{L^{p,q;\alpha}(I^{n})}=\Big\|t^{1/p-1/q}[1+\log(1/t)]^{\alpha}f^{*}(t)\Big\|_{L^{q}(0,1)}.
\end{align*}
Observe that $L^{p,p}(I^{n})=L^{p}(I^{n})$ and $L^{p,q;0}(I^{n})=L^{p,q}(I^{n})$. Let us also mention, for the sake of completeness, that the quantities $\|\cdot\|_{L^{p,q}(I^{n})}$ and $\|\cdot\|_{L^{p,q;\alpha}(I^{n})}$ are in general only quasi-norms, since they may fail to satisfy the triangle inequality. In most cases, they can be turned into equivalent norms replacing $f^{*}$ by $f^{**}$ in corresponding definitions. However,   when the weights $t^{1/p-1/q}$ or $t^{1/p-1/q}[1+\log(1/t)]^{\alpha}$ are non-increasing (and hence, in all   cases we are going to consider), then $\|\cdot\|_{L^{p,q}(I^{n})}$ and $\|\cdot\|_{L^{p,q;\alpha}(I^{n})}$ are norms.

Given an r.i.\ space $X(I^{n}),$ the set 
\begin{align*}
X^{\prime}(I^{n})=\bigg\{f\in\mathcal{M}(I^{n}):\int_{I^{n}}|f(x)g(x)|dx<\infty,\ \textnormal{for any $g\in X(I^{n})$}\bigg\},
\end{align*}
equipped with the norm
\begin{align*}
\|f\|_{X^{\prime}(I^{n})}=\sup_{\|g\|_{X(I^{n})}\leq 1}\int_{I^{n}}|f(x)g(x)|dx,
\end{align*}
is called the associate space of $X(I^{n})$. It turns out that $X^{\prime}(I^{n})$ is again an r.i.\ space \cite[Theorem~I.2.2]{Bennett}. 
The fundamental function of an r.i.\ space $X(I^{n})$ is given by
\begin{align}\label{eq: fundamental function}
\varphi_{X}(t)=\|\chi_{E}\|_{X(I^{n})},
\end{align}
where $|E|=t$ and $\chi_{E}$ denotes the characteristic function of the set $E\subset I^{n}$. 

A basic tool for working with r.i.\ spaces is the Hardy-Littlewood-P\'olya Principle  \cite[Theorem~II.2.2]{Bennett}, which asserts that if $f\in X(I^{n})$ and 
\begin{align*}
\int^{t}_{0}g^{*}(s)ds\leq \int^{t}_{0}f^{*}(s)ds,\ \ 0<t<1,
\end{align*}
then $g\in X(I^{n})$ and $\|g\|_{X(I^{n})}\leq \|f\|_{X(I^{n})}$.

For later purposes, let us recall the Luxemburg  representation theorem  \cite[Theorem~II.4.10]{Bennett}. It says that given an r.i.\ space $X(I^n)$, there exists another r.i.\ space $\overline{X}(0,1)$  such that 
$$f\in X(I^{n})\Longleftrightarrow f^{*}\in\overline{X}(0,1),$$
 and in this case $\|f\|_{X(I^{n})}=\|f^{*}\|_{\overline{X}(0,1)}$.

Next, we  recall the definition of the  Boyd indices of an r.i.\ space. First we introduce
the dilation operator: if $f\in\mathcal{M}(0,1)$ and $t>0,$
\begin{align*}
E_{t}f(s)=\begin{cases}
f(s/t),& \textnormal{if $0\leq s\leq \min (1,t),$}\\
0,&\textnormal{otherwise.}
\end{cases}\
\end{align*}
It is well-known that the operator $E_{t}$ is bounded on $\overline{X}(0, 1),$ for every r.i.\ space $X(I^{n})$ and for every $t>0$ (see e.g. \cite[Proposition~III.5.11]{Bennett}). 

By means of the norm
of $E_{t}$ on $\overline{X}(0,1)$, denoted as $h_{X}(t),$ we define the lower and upper Boyd indices of $X(I^{n})$  as
\begin{align*}
\underline{\alpha}_{X}=\sup_{0<t<1}\frac{\log h_{X}(t)}{\log(t)}\ \ \textnormal{and}\ \  \overline{\alpha}_{X}=\inf_{1<t<\infty}\frac{\log h_{X}(t)}{\log(t)}.
\end{align*}
It is easy to see that $0\leq \underline{\alpha}_{X}\leq \overline{\alpha}_{X}\leq 1$. For instance, if $X=L^{p,q}$, then   $h_{X}(t) = t^{1/p}$ and thus $\underline{\alpha}_{X}=\overline{\alpha}_{X}=1/p$. Furthermore,  for later purposes, let us  emphasize that $\overline{\alpha}_{L^{\infty,p;-1}}=0,\ \ 1<p<\infty$ (for  more details see~\cite{Bennett, Bennett-Rudnick}).

Let us next recall some special results from Interpolation Theory, which we shall need in what follows (for further information on this topic see \cite{Bennett, Bergh}).

Given a pair of compatible Banach spaces $(X_{0}, X_{1})$ (compatible in the sense that they are continuously embedded into a common Hausdorff topological vector space), their $K$-functional is defined, for each $f\in X_{0}+X_{1},$ by
\begin{align*}
K(f,t; X_{0},X_{1}):=\inf_{f=f_{0}+f_{1}}(\|f_{0}\|_{X_{0}}+t\|f_{1}\|_{X_{1}}),\ \ t>0.
\end{align*}

The fundamental result  concerning the $K$-functional is the following \cite[Theorem~V.1.11]{Bennett}:
\begin{theorem}\label{theorem: interpolation operator} Let $(X_{0},X_{1})$ and $(Y_{0},Y_{1})$ be two compatible pairs of Banach spaces and let $T$ be a sublinear operator satisfying
\begin{align*}
T\:: X_{0}\rightarrow Y_{0},\ \  \textnormal{and}\ \  T\:: X_{1}\rightarrow Y_{1}.
\end{align*}
Then, there exists a constant $C>0$ (depending only on the norms of $T$ between $X_{0}$ and $Y_{0}$ and between $X_{1}$ and $Y_{1}$) such that
\begin{align*}
K(Tf,t; Y_{0},Y_{1})\leq C K(Cf,t; X_{0}, X_{1}),\ \ \text{for every $f\in X_{0}+X_{1}$ and $t>0$.}
\end{align*} 
\end{theorem}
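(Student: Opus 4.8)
The plan is to deduce the inequality directly from the definition of the $K$-functional, by pushing an arbitrary decomposition of $f$ through $T$ and invoking the boundedness at the two endpoints. Fix $f\in X_{0}+X_{1}$ and $t>0$, and let $f=f_{0}+f_{1}$ be any decomposition with $f_{0}\in X_{0}$ and $f_{1}\in X_{1}$; write $C_{0}$ and $C_{1}$ for the norms of $T\colon X_{0}\to Y_{0}$ and $T\colon X_{1}\to Y_{1}$, so that $\|Tf_{0}\|_{Y_{0}}\le C_{0}\|f_{0}\|_{X_{0}}$ and $\|Tf_{1}\|_{Y_{1}}\le C_{1}\|f_{1}\|_{X_{1}}$. (One first has to note that $T$ is unambiguously defined on all of $X_{0}+X_{1}$, which follows from the compatibility of the pair together with boundedness on each $X_{j}$.)

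The heart of the argument is to manufacture, from $Tf_{0}$ and $Tf_{1}$, an \emph{admissible} decomposition of $Tf$ itself. If $T$ were linear this would be automatic, since then $Tf=Tf_{0}+Tf_{1}$. For a sublinear $T$ one uses instead the pointwise bound $|Tf|\le|Tf_{0}|+|Tf_{1}|$ on the underlying measure space and splits
\begin{align*}
g_{0}=Tf\,\frac{|Tf_{0}|}{|Tf_{0}|+|Tf_{1}|},\qquad g_{1}=Tf\,\frac{|Tf_{1}|}{|Tf_{0}|+|Tf_{1}|},
\end{align*}
with the convention that the fractions are $0$ wherever the denominator vanishes (where $Tf=0$ as well). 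Then $g_{0}+g_{1}=Tf$, and $|g_{0}|\le|Tf_{0}|$, $|g_{1}|\le|Tf_{1}|$ pointwise, so the lattice property of the Banach function spaces $Y_{0}$ and $Y_{1}$ gives $\|g_{0}\|_{Y_{0}}\le\|Tf_{0}\|_{Y_{0}}$ and $\|g_{1}\|_{Y_{1}}\le\|Tf_{1}\|_{Y_{1}}$.

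Putting these together, for \emph{every} decomposition $f=f_{0}+f_{1}$ we obtain
\begin{align*}
K(Tf,t;Y_{0},Y_{1})\le\|g_{0}\|_{Y_{0}}+t\|g_{1}\|_{Y_{1}}\le C_{0}\|f_{0}\|_{X_{0}}+tC_{1}\|f_{1}\|_{X_{1}}\le C\bigl(\|f_{0}\|_{X_{0}}+t\|f_{1}\|_{X_{1}}\bigr),
\end{align*}
with $C=\max(C_{0},C_{1})$. Taking the infimum over all such decompositions yields $K(Tf,t;Y_{0},Y_{1})\le C\,K(f,t;X_{0},X_{1})$, and since $K(\,\cdot\,,t;X_{0},X_{1})$ is positively homogeneous, the right-hand side equals $K(Cf,t;X_{0},X_{1})$, which is the asserted inequality (with constant depending only on the endpoint norms of $T$).

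The only delicate point is the second paragraph: one must be precise about what ``sublinear'' means so that the pointwise domination $|Tf|\le|Tf_{0}|+|Tf_{1}|$ is legitimate, and one must use that $Y_{0}$ and $Y_{1}$ carry enough order structure to convert that domination into norm inequalities. Everything else is a routine unwinding of the definition of the $K$-functional and uses no structural property of the spaces beyond those already recorded.
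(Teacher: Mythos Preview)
The paper does not supply its own proof of this statement: it is quoted as a known result from \cite[Theorem~V.1.11]{Bennett}, so there is nothing to compare against line by line. Your argument is the standard one and is correct; the splitting $g_0,g_1$ via the ratios $|Tf_j|/(|Tf_0|+|Tf_1|)$ is exactly how one upgrades the linear case to the sublinear one, and you correctly identify at the end the one genuine hypothesis that must be in force for it to work: the target spaces need enough lattice structure so that pointwise domination implies norm domination, and ``sublinear'' must be understood in the sense $|T(f_0+f_1)|\le|Tf_0|+|Tf_1|$ pointwise. In the context of this paper all spaces involved are rearrangement-invariant Banach function spaces, so both requirements are met, and your proof goes through without further comment.
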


The $K$-functional for pairs of Lorentz spaces $L^{p,q}(I^{n})$ is given, up to equivalence, by the following result.

\begin{theorem}\label{holmfor}
(Holmstedt's formulas \cite[Theorem 4.2]{Holmstedt})\label{theorem: Holmstedt formulas} Let  $p_{0}=q_{0}=1$ or $1<p_{0}<\infty$ and $1\leq q_{0}<\infty$. Let $1/\alpha=1/p_{0}-1/p_{1}$. Then, 
\begin{align*}
K(f,t; L^{p_{0},q_{0}}(I^{n}), L^{\infty}(I^{n}))\approx \biggl(\int^{t^{p_{0}}}_{0}\bigl[s^{1/p_{0}-1/q_{0}}f^{*}(s)\bigr]ds\biggr)^{1/q_{0}},\ \ \textrm{for $t>0$.}
\end{align*}
\end{theorem}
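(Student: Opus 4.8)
The plan is to prove the stated equivalence by establishing the two estimates separately, ``$\lesssim$'' and ``$\gtrsim$'', in both cases exploiting the fact that for the couple $\bigl(L^{p_{0},q_{0}}(I^{n}),L^{\infty}(I^{n})\bigr)$ a nearly optimal decomposition of $f$ is produced by truncating $|f|$ at a level of the form $f^{*}(\sigma)$. The two elementary facts I would use throughout are: (i) if $\lambda\geq0$, $f_{1}=\operatorname{sign}(f)\min(|f|,\lambda)$ and $f_{0}=f-f_{1}=\operatorname{sign}(f)(|f|-\lambda)_{+}$, then $\|f_{1}\|_{L^{\infty}(I^{n})}\leq\lambda$ and $f_{0}^{*}=(f^{*}-\lambda)_{+}$; and (ii) the subadditivity of the rearrangement, $f^{*}(s)\leq f_{0}^{*}(s/2)+f_{1}^{*}(s/2)$, which holds for every splitting $f=f_{0}+f_{1}$. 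Since every quantity appearing here depends on $f$ only through $f^{*}$, one may work directly on $I^{n}$; if preferred, one transfers everything first to $\overline{X}(0,1)$ via the Luxemburg representation theorem.

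For the bound ``$\lesssim$'', fix $t>0$ and use the truncation above with $\lambda=f^{*}(t^{p_{0}})$. Then $f_{0}^{*}=(f^{*}-\lambda)_{+}$ is supported in $(0,t^{p_{0}})$ and pointwise bounded by $f^{*}$, so $\|f_{0}\|_{L^{p_{0},q_{0}}(I^{n})}$ is at most the right-hand side of the stated equivalence; moreover $t\lambda=t\,f^{*}(t^{p_{0}})$ is controlled by the same expression, since $f^{*}$ is nonincreasing and hence the integrand $s^{q_{0}/p_{0}-1}f^{*}(s)^{q_{0}}$ dominates $f^{*}(t^{p_{0}})^{q_{0}}s^{q_{0}/p_{0}-1}$ on $(0,t^{p_{0}})$, whose integral is $(p_{0}/q_{0})\,t^{q_{0}}f^{*}(t^{p_{0}})^{q_{0}}$. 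Adding the two contributions bounds $K(f,t;L^{p_{0},q_{0}}(I^{n}),L^{\infty}(I^{n}))$ from above by a constant multiple of the claimed quantity.

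For the bound ``$\gtrsim$'', take an arbitrary admissible decomposition $f=f_{0}+f_{1}$ and combine $f^{*}(s)\leq f_{0}^{*}(s/2)+\|f_{1}\|_{L^{\infty}(I^{n})}$ with the triangle inequality in $L^{q_{0}}(0,t^{p_{0}})$ applied to the weight $s^{1/p_{0}-1/q_{0}}$. The piece arising from $f_{0}^{*}(s/2)$ becomes, after the change of variables $s=2u$ and enlarging the domain of integration to $(0,1)$, a constant multiple of $\|f_{0}\|_{L^{p_{0},q_{0}}(I^{n})}$; the piece arising from $\|f_{1}\|_{L^{\infty}(I^{n})}$ equals $(p_{0}/q_{0})^{1/q_{0}}\,t\,\|f_{1}\|_{L^{\infty}(I^{n})}$, using $\int_{0}^{t^{p_{0}}}s^{q_{0}/p_{0}-1}\,ds=(p_{0}/q_{0})\,t^{q_{0}}$. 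Taking the infimum over all admissible decompositions produces the reverse inequality.

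I do not expect a genuine obstacle here: the proof is a bookkeeping exercise, and the only points that require a little attention are the identity $f_{0}^{*}=(f^{*}-\lambda)_{+}$ for the truncation and keeping track of the dilation factor $s\mapsto s/2$ in the subadditivity estimate --- this is exactly where the (harmless, $p_{0},q_{0}$-dependent) equivalence constants enter. The hypothesis $q_{0}<\infty$ is used only to make the finite $L^{q_{0}}$ norms meaningful, and the special case $p_{0}=q_{0}=1$ recovers the classical exact formula $K(f,t;L^{1}(I^{n}),L^{\infty}(I^{n}))=\int_{0}^{t}f^{*}(s)\,ds$.
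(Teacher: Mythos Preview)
The paper does not give its own proof of this statement: Theorem~\ref{holmfor} is simply quoted from Holmstedt's paper \cite[Theorem~4.2]{Holmstedt}, so there is nothing to compare your argument against on that side. Your proof is correct and is essentially the standard one for the couple $(L^{p_{0},q_{0}},L^{\infty})$: the ``$\lesssim$'' direction via the truncation $f=f_{0}+f_{1}$ at level $\lambda=f^{*}(t^{p_{0}})$, and the ``$\gtrsim$'' direction via the subadditivity estimate $f^{*}(s)\leq f_{0}^{*}(s/2)+\|f_{1}\|_{L^{\infty}}$. The bookkeeping you describe (the identity $f_{0}^{*}=(f^{*}-\lambda)_{+}$, the change of variables $s\mapsto s/2$, and the computation $\int_{0}^{t^{p_{0}}}s^{q_{0}/p_{0}-1}\,ds=(p_{0}/q_{0})t^{q_{0}}$) is exactly what is needed, and the resulting constants depend only on $p_{0}$ and $q_{0}$, as you note.

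One small remark: in the statement as printed the integrand $\bigl[s^{1/p_{0}-1/q_{0}}f^{*}(s)\bigr]$ is missing the exponent $q_{0}$ (and the symbol $\alpha$ in ``$1/\alpha=1/p_{0}-1/p_{1}$'' is vestigial from the general two-parameter Holmstedt formula); your argument proves the intended inequality with the exponent in place.
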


The first-order Sobolev space  built upon an r.i.\ space $Z(I^{n})$ is defined as 
\begin{align*}
W^{1}Z(I^{n})=\big\{u\in Z(I^{n}): |\nabla u|\in Z(I^{n})\big\},
\end{align*}
endowed with the norm 
\begin{align*}
\|u\|_{W^{1}Z(I^{n})}= \|u\|_{Z(I^{n})}+\||\nabla u|\|_{Z(I^{n})}.
\end{align*} 
Here, $\nabla u$ stands for the gradient of $u$ and $|\nabla u|=(\sum^{n}_{i=1}u^{2}_{x_{i}})^{1/2}$. Observe  that if $|D^{1}u|$  denotes the Euclidean length of $(u, \nabla u)$ as an element of $\mathbb R^{n+1},$ then
\begin{align*}
\|u\|_{W^{1}Z(I^{n})}\approx \||D^{1} u|\|_{Z(I^{n})}.
\end{align*}
Concerning the $K\textnormal{-functional}$ for a couple of Sobolev spaces, we mention the work of DeVore and Scherer \cite{DeVore-Scherer}, who proved that, for every $u\in W^{1}L^{1}(I^{n}),$
\begin{align}\label{eq: Kfunctional Sobolev}
K(u,t; W^{1}L^{1}(I^{n}), W^{1}L^{\infty}(I^{n}))\approx \int^{t}_{0}|D^{1}u|^{*}(s)ds,\ \ t>0.
\end{align}
For later purposes,  we would like to observe that, using \eqref{eq: Kfunctional Sobolev}, the reiteration theorem \cite[Theorem~V.2.4]{Bennett} and Theorem~\ref{holmfor}, if either $p_{0}=q_{0}=1,$ or $1<p_{0}<p_{1}<\infty$ and $1\leq q_{0},q_{1}<\infty,$ then, for any $t>0,$
\begin{align}\label{eq: Holmstedt sobolev}
K(u,t; W^{1}L^{p_{0},q_{0}}(I^{n}), W^{1}L^{p_{1},q_{1}}(I^{n}))&\approx \bigg(\int^{t^{\alpha}}_{0}\bigl[s^{1/p_{0}-1/q_{0}}|D^{1} u|^{*}(s)\bigr]^{q_{0}}ds\biggr)^{1/q_{0}}\\
\nonumber &\qquad+ t \biggl(\int^{1}_{t^{\alpha}}\bigl[s^{1/p_{1}-1/q_{1}}|D^{1} u|^{*}(s)\bigr]^{q_{1}}ds\biggr)^{1/q_{1}},
\end{align}
 where $\alpha$ is defined as in Theorem \ref{theorem: Holmstedt formulas}. For several properties concerning Sobolev spaces, we refer to \cite{Adams, Mazya, Brezis}.

\section{Mixed norm spaces}\label{mixed norm spaces}

Our goal in this section is to present some basic properties of mixed norm spaces we shall need for our work (in what follows and throughout the paper we shall assume $n\geq 2$.)

Let $k\in\{1,\ldots,n\}$. We write $\widehat{x}_{k}$  for the point in $I^{n-1}$ obtained from a given vector $x\in I^{n}$ by removing its $k\textnormal{th}$ coordinate. That is,
\begin{align*}
\widehat{x}_{k}=(x_{1},\ldots,x_{k-1},x_{k+1},\ldots,x_{n})\in I^{n-1}.
\end{align*}
Moreover, for any $f\in\mathcal{M}(I^{n}),$ we use the notation $f_{\widehat{x}_{k}}$  for the function  obtained from $f,$ with $\widehat{x}_{k}$ fixed. Observe   that, since $f$ is measurable,  $f_{\widehat{x}_{k}}$ is also measurable a.e. $\widehat{x}_{k}\in I^{n-1}$.

We now recall the Benedek-Panzone spaces, which were introduced in~\cite{Benedek-Panzone} for the case of $L^p$.  For further information on this topic see \cite{Buhvalov, Blozinski, Boccuto-Bukhvalov-Sambucini,Barza-Kaminska-Persson-Soria}.

\begin{definition} Let $k\in\{1,\ldots,n\}$.  Given two r.i.\ spaces $X(I^{n-1})$ and  $Y(I),$ the  Benedek-Panzone space $\mathcal{R}_{k}(X,Y)$ is defined  as the collection of all $f\in\mathcal{M}(I^{n})$ satisfying
\begin{align*}
\bigl\|f\bigr\|_{\mathcal{R}_{k}(X,Y)}=\bigl\|\psi_{k}(f,Y)\bigr\|_{X(I^{n-1})}<\infty,
\end{align*}
where $\psi_{k}(f,Y)(\widehat{x}_{k})=\bigl\|f(\widehat{x}_{k},\cdot)\bigr\|_{Y(I)}$.
\end{definition}

Buhvalov \cite{Buhvalov} and  Blozinski \cite{Blozinski} proved that   $\mathcal{R}_{k}(X,Y)$  is a Banach function space. Moreover, Boccuto, Bukhvalov, and Sambucini \cite{Boccuto-Bukhvalov-Sambucini} proved that $\mathcal{R}_{k}(X,Y)$ is an r.i.\ space, if and only if $X=Y=L^p$, for some $1\le p\le\infty$.

Now, we shall give the definition of the  mixed norm spaces, sometimes also called symmetric mixed norm spaces.

\begin{definition}\label{definition: mixed norm spaces}Given two r.i.\ spaces  $X(I^{n-1})$ and  $Y(I),$ the mixed norm space $\mathcal{R}(X,Y)$ is defined as
\begin{align*}
\mathcal{R}(X,Y)=\bigcap^{n}_{k=1}\mathcal{R}_{k}(X,Y).
\end{align*}
For each $f\in\mathcal{R}(X,Y),$ we set  $\bigl\|f\bigr\|_{\mathcal{R}(X,Y)}=\sum^{n}_{k=1}\bigl\|f\bigr\|_{\mathcal{R}_{k}(X,Y)}$.
\end{definition}
 
It is not difficult to verify that  $\mathcal{R}(X,Y)$ is a Banach function space. Since the pioneering works of Gagliardo~\cite{Gagliardo}, Nirenberg \cite{Nirenberg}, and Fournier \cite{Fournier}, many useful properties and generalizations of these spaces have been  studied, via different methods, by various authors, including Blei  \cite{Blei-Fournier}, Milman~\cite{Milman-kfunctional-mixed-norm}, Algervik and Kolyada \cite{Robert-Viktor}, and Kolyada \cite{Viktor2012, Viktor2013}.

 A thorough study of mixed norm spaces has been considered in  \cite{Clavero-Soria}.  In particular,  extending the mixed norm estimates due to Fournier \cite{Fournier} to more general r.i.\ spaces, we were able to  obtain a description of the smallest r.i.\ space  that is  continuously embedded into  a fixed  mixed norm space of the form $\mathcal{R}(X,L^{\infty})$ (see \cite[Theorem~5.6]{Clavero-Soria}):
  
 \begin{theorem}\label{teo: rango R(X,Loo)->Z} Let $X(I^{n-1})$ be an r.i. space . Then, the space $Z_{\mathcal{R}(X,L^{\infty})}(I^{n})$  of all $f\in \mathcal{M}(I^{n})$ such that
 \begin{align*}
 \|f\|_{Z_{\mathcal{R}(X,L^{\infty})}(I^{n})}= \bigl\|f^{*}(t^{n^{\prime}})\bigr\|_{\overline{X}(0,1)}<\infty,
 \end{align*}
 is an r.i.\ space.  Moreover, $Z_{\mathcal{R}(X,L^{\infty})}(I^{n})$ is the smallest r.i. space satisfying
 \begin{align}\label{eq: rango R(X,Loo)->Z}
\mathcal{R}(X,L^{\infty})\hookrightarrow Z_{\mathcal{R}(X,L^{\infty})}(I^{n})
\end{align}
 holds. 
 \end{theorem}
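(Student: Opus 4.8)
The statement has two parts: first, that $Z_{\mathcal{R}(X,L^\infty)}(I^n)$ defined by the functional $\|f\|_{Z} = \|f^*(t^{n'})\|_{\overline X(0,1)}$ is an r.i.\ space, and second, that it is the smallest r.i.\ space into which $\mathcal{R}(X,L^\infty)$ embeds continuously. For the first part, the plan is to verify the Banach function norm axioms (A1)--(A5) directly for the mapping $\rho(f) = \|f^*(t^{n'})\|_{\overline X(0,1)}$, using that $\overline X(0,1)$ is already a r.i.\ Banach function norm. The key observation is that the substitution $t \mapsto t^{n'}$ is a monotone change of variables on $(0,1)$, so $f^*(t^{n'})$ is again a nonincreasing function of $t$, and the map $g \mapsto g(t^{n'})$ is order-preserving and interacts well with monotone limits (Fatou-type property (A3) passes through because $f_j \uparrow f$ a.e.\ implies $f_j^* \uparrow f^*$ pointwise, hence $f_j^*(t^{n'}) \uparrow f^*(t^{n'})$). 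Rearrangement invariance is immediate since $\rho$ depends on $f$ only through $f^*$. The triangle inequality is the one axiom needing a small argument: it follows from the subadditivity of $t \mapsto f^*(t)$ at the level of $(f+g)^* \prec f^* + g^*$ in the Hardy--Littlewood--Pólya sense combined with the fact that $\overline X$ respects that preorder after the change of variables — or, more cleanly, one invokes \cite[Theorem~5.6]{Clavero-Soria} itself, since the statement is quoted from there; in the self-contained write-up I would cite that reference for the Banach function space structure and focus the argument on the embedding and minimality.

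For the embedding $\mathcal{R}(X,L^\infty) \hookrightarrow Z_{\mathcal{R}(X,L^\infty)}(I^n)$, the plan is to show $f^*(t^{n'})$ is controlled in $\overline X(0,1)$ by $\|f\|_{\mathcal{R}(X,L^\infty)}$. The essential geometric input, going back to Gagliardo--Nirenberg and Fournier, is a pointwise rearrangement estimate: for $f \in \mathcal{R}(X,L^\infty)$ one has, for $s \in (0,1)$,
\begin{align*}
f^*(s^{n'}) \lesssim \Big(\prod_{k=1}^n \psi_k(f,L^\infty)\Big)^{*, \text{iterated}}(s),
\end{align*}
more precisely the classical inequality that if $g_k \in \mathcal{M}_+(I^{n-1})$ and $F(x) \le \prod_{k=1}^n g_k(\widehat x_k)^{1/(n-1)}$ then $F^*$ on $I^n$ is dominated, after the scaling $t \mapsto t^{n'}$, by a product/convolution of the $g_k^*$ on $(0,1)$. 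Taking $g_k = \psi_k(f,L^\infty)^{\,n-1}$ (so that $|f(x)| \le \psi_k(f,L^\infty)(\widehat x_k)$ for each $k$ trivially gives $|f(x)|^n \le \prod_k \psi_k(f,L^\infty)(\widehat x_k)$), one gets $\|f^*(t^{n'})\|_{\overline X(0,1)} \lesssim \big\| (\prod_k \psi_k)^{1/n} \text{-rearrangement} \big\|_{\overline X}$, and then an application of a generalized Hölder inequality for r.i.\ norms together with the definition $\|f\|_{\mathcal{R}(X,L^\infty)} = \sum_k \|\psi_k(f,L^\infty)\|_{X(I^{n-1})}$ closes the estimate. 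I expect this to be the technical heart, and the main obstacle is handling the iterated rearrangement of the product correctly; the cleanest route is to quote the version of this inequality already established in \cite{Clavero-Soria} rather than re-derive it.

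For minimality, suppose $Y(I^n)$ is any r.i.\ space with $\mathcal{R}(X,L^\infty) \hookrightarrow Y(I^n)$; I must show $Z_{\mathcal{R}(X,L^\infty)}(I^n) \hookrightarrow Y(I^n)$. The strategy is to test the hypothesis on extremal functions. Given $h$ nonnegative and nonincreasing on $(0,1)$, consider the radial-type function $f(x) = h(|x|^{?})$ suitably normalized — concretely one builds, for each level set scale, a function of the form $f(x) = g(\omega_n |x|^n)$ with $g$ chosen so that $f^* = h$ (using \eqref{eq: radial function}) and estimates $\|f\|_{\mathcal{R}(X,L^\infty)}$. For such radial functions the mixed norm is computed explicitly: $\psi_k(f,L^\infty)(\widehat x_k) = \sup_{x_k} |f(x)|$ depends only on $|\widehat x_k|$, and its rearrangement on $I^{n-1}$ relates to $h$ via the scaling exponent $n' = n/(n-1)$, yielding $\|f\|_{\mathcal{R}(X,L^\infty)} \approx \|h(t^{n'})\|_{\overline X(0,1)} = \|f\|_{Z_{\mathcal{R}(X,L^\infty)}}$. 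Feeding this into $\mathcal{R}(X,L^\infty) \hookrightarrow Y$ gives $\|f\|_{\overline Y(0,1)} = \|h\|_{\overline Y} \lesssim \|h(t^{n'})\|_{\overline X}$ for all admissible $h$; since a general $f \in Z_{\mathcal{R}(X,L^\infty)}$ is controlled in $Y$ by the $Y$-norm of its decreasing rearrangement via the Hardy--Littlewood--Pólya principle, this extremal inequality upgrades to the desired embedding $Z_{\mathcal{R}(X,L^\infty)} \hookrightarrow Y$. The delicate point here is the bookkeeping of the exponent $n'$ in the rearrangement of $\psi_k(f,L^\infty)$ for radial $f$ — this is where the specific form $f^*(t^{n'})$ in the definition of $Z_{\mathcal{R}(X,L^\infty)}$ gets pinned down — and again the matching upper estimate from the embedding part guarantees $\|f\|_{\mathcal{R}(X,L^\infty)}$ is not merely $\gtrsim$ but $\approx \|f\|_{Z_{\mathcal{R}(X,L^\infty)}}$ on this test family, which is exactly what forces minimality.
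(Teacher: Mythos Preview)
The paper does not prove this theorem: it is quoted from \cite[Theorem~5.6]{Clavero-Soria} with no argument reproduced, as you yourself note. There is therefore no in-paper proof to compare your plan against.

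That said, your outline is correct and matches the standard route one would expect in the cited source. The axiom check goes through once you observe that the nonincreasing Jacobian weight $s^{1/n'-1}$ allows Hardy's Lemma to push the majorization $(f+g)^{**}\le f^{**}+g^{**}$ through the substitution $t\mapsto t^{n'}$, which is precisely what is needed for the triangle inequality. The embedding is exactly the Fournier--Gagliardo mechanism you sketch, starting from $|f(x)|\le\min_k\psi_k(f,L^\infty)(\widehat x_k)$. The minimality argument via radial test functions $f(x)=h(\omega_n|x|^n)$ is right: for these one computes $\psi_k(f,L^\infty)(\widehat x_k)=h(\omega_n|\widehat x_k|^n)$, whose $(n-1)$-dimensional rearrangement is, up to a bounded dilation, $h(t^{n'})$, yielding $\|f\|_{\mathcal{R}(X,L^\infty)}\lesssim\|h(t^{n'})\|_{\overline X(0,1)}$. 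One small correction to your last sentence: for minimality you only need this $\lesssim$ direction, which comes from the explicit computation on the test family; the full $\approx$ is not required, and the embedding part plays no role in the minimality step.
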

Furthermore, we   can see that  \eqref{eq: rango R(X,Loo)->Z} is a strict embedding when 
$$
Z(I^{n})\neq L^{\infty}(I^{n})=\mathcal{R}(L^{\infty},L^{\infty}).
$$

 \begin{theorem}\label{teo: mixta inclusion Z->R(X,Loo)} Let  $X(I^{n-1})$ and $Z(I^{n})$  be  r.i.\ spaces. Then, 
\begin{align*}
 Z(I^{n})\hookrightarrow\mathcal{R}(X,L^{\infty}) \Longleftrightarrow Z(I^{n})=L^{\infty}(I^{n}).
\end{align*}
\end{theorem}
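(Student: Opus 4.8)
The plan is to establish the two implications separately: the ``if'' direction is a one-line direct estimate, while the ``only if'' direction is proved by contraposition via an explicit counterexample.

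\emph{The implication $\Leftarrow$.} Suppose $Z(I^{n})=L^{\infty}(I^{n})$. Given $f\in L^{\infty}(I^{n})$ and $k\in\{1,\dots,n\}$, one has $\psi_{k}(f,L^{\infty})(\widehat{x}_{k})=\|f(\widehat{x}_{k},\cdot)\|_{L^{\infty}(I)}\le\|f\|_{L^{\infty}(I^{n})}$ for a.e.\ $\widehat{x}_{k}\in I^{n-1}$, so by the monotonicity of $\|\cdot\|_{X}$ together with (A4),
\[
\|f\|_{\mathcal{R}_{k}(X,L^{\infty})}=\bigl\|\psi_{k}(f,L^{\infty})\bigr\|_{X(I^{n-1})}\le\|\chi_{I^{n-1}}\|_{X(I^{n-1})}\,\|f\|_{L^{\infty}(I^{n})}<\infty .
\]
Summing over $k$ gives $\|f\|_{\mathcal{R}(X,L^{\infty})}\le n\,\|\chi_{I^{n-1}}\|_{X(I^{n-1})}\,\|f\|_{L^{\infty}(I^{n})}$, hence $L^{\infty}(I^{n})\hookrightarrow\mathcal{R}(X,L^{\infty})$. (The same estimate shows this holds for every r.i.\ space $X$, consistently with $\mathcal{R}(L^{\infty},L^{\infty})=L^{\infty}(I^{n})$.)

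\emph{The implication $\Rightarrow$.} I would argue contrapositively: assuming $Z(I^{n})\ne L^{\infty}(I^{n})$, I will produce $G\in Z(I^{n})$ with $G\notin\mathcal{R}(X,L^{\infty})$. Since $L^{\infty}(I^{n})\hookrightarrow Z(I^{n})$ always holds (by (A2) and (A4)), the closed graph theorem shows that $Z(I^{n})\subseteq L^{\infty}(I^{n})$ \emph{as sets} would force $Z(I^{n})=L^{\infty}(I^{n})$; hence there is $g\in Z(I^{n})$ that is not essentially bounded, i.e.\ $\|g^{*}\|_{L^{\infty}(0,1)}=\infty$ (recall $g^{*}$ is non-increasing). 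By (A5), $g\in L^{1}(I^{n})$, so $g^{*}\in L^{1}(0,1)$ is finite a.e. Identifying $I$ with $(0,1)$ by a measure-preserving affine change of variables, define $G(x)=g^{*}(x_{1})$ for $x\in I^{n}$; then $G$ is measurable and a.e.\ finite, and by Fubini its distribution function is $\lambda_{G}(t)=|\{x_{1}\in I:g^{*}(x_{1})>t\}|=\lambda_{g^{*}}(t)=\lambda_{g}(t)$. Thus $G$ is equimeasurable with $g$, and rearrangement invariance of $Z$ gives $G\in Z(I^{n})$ with $\|G\|_{Z(I^{n})}=\|g\|_{Z(I^{n})}<\infty$. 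On the other hand, for \emph{every} $\widehat{x}_{1}\in I^{n-1}$ the slice $G(\widehat{x}_{1},\cdot)$ is just $g^{*}$ as a function of $x_{1}$, so $\psi_{1}(G,L^{\infty})(\widehat{x}_{1})=\|g^{*}\|_{L^{\infty}(0,1)}=\infty$. Hence $G\notin\mathcal{R}_{1}(X,L^{\infty})$, and a fortiori $G\notin\mathcal{R}(X,L^{\infty})=\bigcap_{k=1}^{n}\mathcal{R}_{k}(X,L^{\infty})$, which contradicts $Z(I^{n})\hookrightarrow\mathcal{R}(X,L^{\infty})$.

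The conceptual heart of the argument — an unbounded profile placed along a single coordinate axis cannot lie in $\mathcal{R}_{1}(X,L^{\infty})$, because the inner $L^{\infty}(I)$-norm taken over precisely that axis is identically $+\infty$ — is immediate once $G$ is in place. I expect the only delicate bookkeeping to be checking that $G$ is a.e.\ finite (which is exactly where axiom (A5) is used) and that equimeasurability transfers both membership in and the norm of the r.i.\ space $Z(I^{n})$; beyond this, the proof needs nothing more than the axioms (A1)--(A5) and the definition of the Benedek-Panzone spaces.
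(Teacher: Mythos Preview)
Your proof is correct and follows essentially the same approach as the paper's: for the nontrivial direction you pick $g\in Z\setminus L^{\infty}$ and build a function depending on a single coordinate via $g^{*}$, so that the inner $L^{\infty}$-norm along that coordinate is identically $+\infty$. The only cosmetic differences are that the paper invokes Theorem~\ref{teo: mixta inclusion R(X1,Loo)->R(X2,Loo)} for the easy implication (whereas you give a direct estimate), and for the counterexample the paper writes $f(x)=g^{*}(2|x_{n}|)\chi_{I^{n-1}\times(-r,r)}$ using the $n$-th coordinate on $I=(-a,b)$, while you use $G(x)=g^{*}(x_{1})$ after identifying $I$ with $(0,1)$; your version is slightly cleaner since $G$ is globally equimeasurable with $g$ rather than merely satisfying $f^{*}\le g^{*}$.
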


To prove this, we first need to recall a result concerning embeddings between mixed norms (see \cite[Theorem~4.6]{Clavero-Soria}). 

\begin{theorem}\label{teo: mixta inclusion R(X1,Loo)->R(X2,Loo)} Let  $X_{1}(I^{n-1})$ and $X_{2}(I^{n-1})$ be  r.i.\ spaces. Then,
\begin{align*}
\mathcal{R}(X_{1},L^{\infty})\hookrightarrow \mathcal{R}(X_{2},L^{\infty})\Longleftrightarrow X_{1}(I^{n-1})\hookrightarrow X_{2}(I^{n-1}).
\end{align*}
\end{theorem}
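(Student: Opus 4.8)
The plan is to prove the two implications separately; the forward one is a one-liner and all the content is in the converse. For $X_{1}(I^{n-1})\hookrightarrow X_{2}(I^{n-1})\Rightarrow\mathcal{R}(X_{1},L^{\infty})\hookrightarrow\mathcal{R}(X_{2},L^{\infty})$, I would just feed the norm inequality $\|h\|_{X_{2}(I^{n-1})}\leq C\|h\|_{X_{1}(I^{n-1})}$ into each factor space: applying it to $h=\psi_{k}(f,L^{\infty})$ gives $\|f\|_{\mathcal{R}_{k}(X_{2},L^{\infty})}\leq C\|f\|_{\mathcal{R}_{k}(X_{1},L^{\infty})}$ for every $f\in\mathcal{M}(I^{n})$ and every $k\in\{1,\dots,n\}$, and summing over $k$ closes this direction.

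For the converse, the first step is a reduction via the Luxemburg representation theorem: since $\|g\|_{X_{i}(I^{n-1})}=\|g^{*}\|_{\overline{X_{i}}(0,1)}$ and each $\overline{X_{i}}(0,1)$ is rearrangement invariant, it suffices to prove $\|\varphi\|_{\overline{X_{2}}(0,1)}\leq C\|\varphi\|_{\overline{X_{1}}(0,1)}$ for every non-increasing, right-continuous $\varphi$ on $(0,1)$. The heart of the proof is then to produce, from $\varphi$, a \emph{single} function $f\in\mathcal{M}(I^{n})$ all of whose $n$ directional norms $\|f\|_{\mathcal{R}_{k}(X,L^{\infty})}$ are simultaneously comparable to $\|\varphi\|_{\overline{X}(0,1)}$, and this \emph{for every} r.i.\ space $X(I^{n-1})$ at once. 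Taking $I=(0,1)$ without loss of generality, I would test
\[
f(x)=\varphi\bigl((\max_{1\leq i\leq n}x_{i})^{n-1}\bigr),\qquad x\in I^{n}.
\]
Fixing $\widehat{x}_{k}$ and writing $M=\max_{i\neq k}x_{i}$, the monotonicity of $\varphi$ gives $\psi_{k}(f,L^{\infty})(\widehat{x}_{k})=\varphi(M^{n-1})$ for a.e.\ $\widehat{x}_{k}$; a short computation of distribution functions on $I^{n-1}$ — in which the exponent $n-1$ is precisely what makes $|\{\widehat{x}_{k}\in I^{n-1}:\max_{i\neq k}x_{i}<s^{1/(n-1)}\}|=s$ — shows that $\psi_{k}(f,L^{\infty})$ and $\varphi$ have the same distribution function, hence $(\psi_{k}(f,L^{\infty}))^{*}=\varphi$. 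Therefore $\|f\|_{\mathcal{R}_{k}(X,L^{\infty})}=\|\varphi\|_{\overline{X}(0,1)}$ for each $k$, so that $\|f\|_{\mathcal{R}(X,L^{\infty})}=n\|\varphi\|_{\overline{X}(0,1)}$. Using this with $X=X_{1}$ and $X=X_{2}$ together with the assumed embedding yields $n\|\varphi\|_{\overline{X_{2}}(0,1)}\leq Cn\|\varphi\|_{\overline{X_{1}}(0,1)}$, and undoing the Luxemburg reduction gives $X_{1}(I^{n-1})\hookrightarrow X_{2}(I^{n-1})$.

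The step I expect to be the main obstacle is exactly the choice of test function. The naive attempt — take $f$ independent of the last variable, so that $\psi_{n}(f,L^{\infty})$ recovers $g$ exactly — breaks down because for $k<n$ the slice $\psi_{k}(f,L^{\infty})$ becomes a supremum of $g$ over one coordinate, and such a projection can inflate an r.i.\ norm without bound, so only one of the $n$ directions gets controlled. The symmetric $\max$-construction repairs this: every $L^{\infty}$-slice is taken along a variable on which $f$ is weakly monotone, so the slice norm is an explicit trace rather than a genuine supremum, and the power $n-1$ calibrates the distribution functions so that all $n$ directions survive with the right size. (A quicker but less self-contained route would be to chain $\mathcal{R}(X_{1},L^{\infty})\hookrightarrow\mathcal{R}(X_{2},L^{\infty})\hookrightarrow Z_{\mathcal{R}(X_{2},L^{\infty})}(I^{n})$ and invoke the minimality of $Z_{\mathcal{R}(X_{1},L^{\infty})}(I^{n})$ from Theorem~\ref{teo: rango R(X,Loo)->Z} to obtain $\|h^{*}(t^{n^{\prime}})\|_{\overline{X_{2}}(0,1)}\lesssim\|h^{*}(t^{n^{\prime}})\|_{\overline{X_{1}}(0,1)}$, and then absorb the substitution $t\mapsto t^{1/n^{\prime}}$ to recover $X_{1}(I^{n-1})\hookrightarrow X_{2}(I^{n-1})$.)
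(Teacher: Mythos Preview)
Your argument is correct, but note that the present paper does not actually prove this theorem: it is recalled from \cite[Theorem~4.6]{Clavero-Soria} and used only as an input to the proof of Theorem~\ref{teo: mixta inclusion Z->R(X,Loo)}. There is therefore no proof here to compare against.

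On its own merits, the proof is clean. The forward direction is immediate, and for the converse your symmetric test function $f(x)=\varphi\bigl((\max_{i}x_{i})^{n-1}\bigr)$ does exactly what is needed: the exponent $n-1$ makes $\widehat{x}_{k}\mapsto(\max_{i\neq k}x_{i})^{n-1}$ uniformly distributed on $(0,1)$, so each $\psi_{k}(f,L^{\infty})$ is equimeasurable with $\varphi$ and all $n$ directional norms collapse to $\|\varphi\|_{\overline{X}(0,1)}$. Your diagnosis of why the naive ``extend trivially in one variable'' attempt fails is also on point. The alternative route you sketch via Theorem~\ref{teo: rango R(X,Loo)->Z} works as well, though since that result is likewise imported from \cite{Clavero-Soria}, the direct construction is the more self-contained choice.
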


\begin{proof}[Proof of Theorem~\ref{teo: mixta inclusion Z->R(X,Loo)}] 
 In view of Theorem~\ref{teo: mixta inclusion R(X1,Loo)->R(X2,Loo)}, it suffices to prove the necessary part of this result. We shall see that if 
\begin{align*}
\begin{array}{c}
L^{\infty}(I^{n})\hookrightarrow Z(I^{n}), \\[-9pt]    
{\neq}
\end{array} 
\end{align*}
then
\begin{align*}
Z(I^{n})\not\hookrightarrow\mathcal{R}(X,L^{\infty}).
\end{align*}
We may suppose, without loss of generality, that  $I=(-a,b),$ with $a,b\in\mathbb R_{+}$. Let $0<r<\min(a,b)$. Given any function $g\in Z(I^{n}),$ but $g\not\in L^{\infty}(I^{n}),$ we define 
 \begin{align*}
 f(x)=\begin{cases}
 g^{*}(2|x_{n}|),& \textnormal{if $(\widehat{x}_{n},x_{n})\in  I^{n-1}\times (-r,r),$}\\
 0,&\textnormal{otherwise.}
 \end{cases}
 \end{align*} 
Let us see that $f\in Z(I^{n})$ and $f\notin \mathcal{R}(X,L^{\infty})$. In fact, using  \eqref{eq: radial function}, we get
 \begin{align*}
f^{*}(t)=\begin{cases}
g^{*}(t),& \textnormal{if $0\leq t<\min\bigl(2r,\lambda_{g}(0)\bigr),$}\\
0,&\textnormal{otherwise.}\end{cases}
 \end{align*}
 Hence, our assumption on $g$ ensures that 
 \begin{align*}
 \big\|f\big\|_{Z(I^{n})}\leq \big\|g\big\|_{Z(I^{n})}<\infty.
 \end{align*}
  On the other hand, for any $\widehat{x}_{n}\in I^{n-1},$ it holds that 
 \begin{align*}
 \psi_{n}(f,L^{\infty})(\widehat{x}_{n})=\left\|g\right\|_{L^{\infty}(I)}=\infty.
 \end{align*}
 Hence $f\not\in\mathcal{R}_{n}(X,L^{\infty})$ and  the proof is complete.
\end{proof}

Taking into account Theorem~\ref{teo: mixta inclusion Z->R(X,Loo)}, it is immediate to see that a mixed norm space $\mathcal{R}(X,L^{\infty})$ is an r.i.\ space if and only if $\mathcal{R}(X,L^{\infty})=L^{\infty}(I^{n}),$ which is equivalent to $X(I^{n-1})=L^{\infty}(I^{n-1})$.
 
We end this section by recalling the expression of the $K$-functional for the couple of mixed norm spaces $(\mathcal{R}(X,L^{\infty}),L^{\infty})$ given in \cite{Clavero-Soria}. 

\begin{theorem}\label{theorem: K-functional of R(X,Loo) and Loo} Let $X(I^{n-1})$ be an r.i.\ space and let   $f\in  \mathcal{R}(X,L^{\infty})+L^{\infty}(I^{n})$. Then, 
\begin{align*}
K(f,\varphi_{X}(t), \mathcal{R}(X,L^{\infty}),L^{\infty})\approx\sum^{n}_{k=1}\left\|\psi^{*}_{k}(f,L^{\infty})\chi_{(0,t)}\right\|_{\overline{X}(0,)},\ \ t>0,
\end{align*}
where $\varphi_{X}(t)$ is the fundamental function of $X(I^{n-1})$ defined in \eqref{eq: fundamental function}.
 
\end{theorem}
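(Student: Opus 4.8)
The plan is to compute the $K$-functional of the couple slice by slice, pass each slice to a scalar $K$-functional on $(0,1)$, evaluate it at the distinguished parameter $\varphi_X(t)$, and finally reassemble the intersection $\mathcal{R}(X,L^\infty)=\bigcap_{k=1}^{n}\mathcal{R}_k(X,L^\infty)$. Throughout I would use the truncation description of the $K$-functional for any couple ending in $L^\infty$: for a Banach function space $Y$ and an admissible $\phi$,
\[
K(\phi,s;Y,L^\infty)=\inf_{\lambda>0}\bigl(\bigl\|(|\phi|-\lambda)_+\bigr\|_{Y}+s\lambda\bigr),
\]
the near-optimal decomposition being the truncation of $\phi$ at height $\lambda$ (so the nonnegative case is enough, and $((|\phi|-\lambda)_+)^*=(\phi^*-\lambda)_+$).

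First, fix $k$ and $f\in\mathcal{R}(X,L^\infty)+L^\infty(I^n)$. If $f=g+h$ on $I^n$, then for a.e.\ $\widehat{x}_k$ one has $\|f(\widehat{x}_k,\cdot)\|_{L^\infty(I)}\le\|g(\widehat{x}_k,\cdot)\|_{L^\infty(I)}+\|h\|_{L^\infty(I^n)}$, i.e.\ $\psi_k(f,L^\infty)\le\psi_k(g,L^\infty)+\|h\|_{L^\infty(I^n)}$ on $I^{n-1}$; splitting $\psi_k(f,L^\infty)$ accordingly (take the minimum with $\psi_k(g,L^\infty)$ as the $X$-part) gives $K(\psi_k(f,L^\infty),s;X(I^{n-1}),L^\infty(I^{n-1}))\le\|g\|_{\mathcal{R}_k(X,L^\infty)}+s\|h\|_{L^\infty(I^n)}$, hence $K(\psi_k(f,L^\infty),s;X,L^\infty)\le K(f,s;\mathcal{R}_k(X,L^\infty),L^\infty(I^n))$. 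Conversely, truncating $f$ itself at height $\lambda$ produces $g_\lambda,h_\lambda$ with $\psi_k(g_\lambda,L^\infty)=(\psi_k(f,L^\infty)-\lambda)_+$ and $\|h_\lambda\|_{L^\infty(I^n)}\le\lambda$, so $K(f,s;\mathcal{R}_k(X,L^\infty),L^\infty(I^n))\le\inf_{\lambda>0}\bigl(\|(\psi_k(f,L^\infty)-\lambda)_+\|_{X(I^{n-1})}+s\lambda\bigr)$. By the truncation formula these two inequalities give the exact identity $K(f,s;\mathcal{R}_k(X,L^\infty),L^\infty(I^n))=K(\psi_k(f,L^\infty),s;X(I^{n-1}),L^\infty(I^{n-1}))$, and the Luxemburg representation theorem together with $((|\phi|-\lambda)_+)^*=(\phi^*-\lambda)_+$ rewrites the right-hand side as $K(\psi_k^*(f,L^\infty),s;\overline{X}(0,1),L^\infty(0,1))$.

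Next I would compute this scalar $K$-functional at $s=\varphi_X(t)$, $0<t<1$. For nonnegative nonincreasing $g$ on $(0,1)$, the decomposition $g=(g-g(t))_+ +\min(g,g(t))$ has first summand supported in $(0,t)$ with $(g-g(t))_+\le g\chi_{(0,t)}$, and second summand bounded by $g(t)$ in $L^\infty$, so $K(g,\varphi_X(t);\overline{X}(0,1),L^\infty(0,1))\le\|g\chi_{(0,t)}\|_{\overline{X}(0,1)}+g(t)\varphi_X(t)\le 2\|g\chi_{(0,t)}\|_{\overline{X}(0,1)}$, since $g(t)\varphi_X(t)=\|g(t)\chi_{(0,t)}\|_{\overline{X}(0,1)}\le\|g\chi_{(0,t)}\|_{\overline{X}(0,1)}$. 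For the reverse inequality, any nonnegative $g=g_0+g_1$ satisfies $g\chi_{(0,t)}\le g_0+\|g_1\|_\infty\chi_{(0,t)}$, whence $\|g\chi_{(0,t)}\|_{\overline{X}(0,1)}\le\|g_0\|_{\overline{X}(0,1)}+\|g_1\|_\infty\varphi_X(t)$, and taking the infimum gives $\|g\chi_{(0,t)}\|_{\overline{X}(0,1)}\le K(g,\varphi_X(t);\overline{X}(0,1),L^\infty(0,1))$. Applying this with $g=\psi_k^*(f,L^\infty)$ yields $K(f,\varphi_X(t);\mathcal{R}_k(X,L^\infty),L^\infty(I^n))\approx\|\psi_k^*(f,L^\infty)\chi_{(0,t)}\|_{\overline{X}(0,1)}$.

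Finally I would pass to the intersection. Since $\|\cdot\|_{\mathcal{R}(X,L^\infty)}=\sum_k\|\cdot\|_{\mathcal{R}_k(X,L^\infty)}$ dominates each $\|\cdot\|_{\mathcal{R}_k(X,L^\infty)}$ and every decomposition of $f$ is admissible for every $\mathcal{R}_k$, one gets $\sum_k K(f,s;\mathcal{R}_k(X,L^\infty),L^\infty(I^n))\lesssim K(f,s;\mathcal{R}(X,L^\infty),L^\infty(I^n))$ at once. For the opposite inequality, pick for each $k$ a height $\lambda_k$ for which the truncation of $f$ at height $\lambda_k$ is within a factor $2$ of $K(f,s;\mathcal{R}_k(X,L^\infty),L^\infty(I^n))$, set $\lambda=\max_k\lambda_k$, and use the single truncation $f=g_\lambda+h_\lambda$ of $f$ at height $\lambda$: then $\|h_\lambda\|_{L^\infty(I^n)}\le\lambda\le\sum_k\lambda_k$, while $\psi_k(g_\lambda,L^\infty)=(\psi_k(f,L^\infty)-\lambda)_+\le(\psi_k(f,L^\infty)-\lambda_k)_+$ gives $\|g_\lambda\|_{\mathcal{R}_k(X,L^\infty)}\le\|(\psi_k(f,L^\infty)-\lambda_k)_+\|_{X(I^{n-1})}$ for every $k$; summing over $k$ yields $K(f,s;\mathcal{R}(X,L^\infty),L^\infty(I^n))\le\|g_\lambda\|_{\mathcal{R}(X,L^\infty)}+s\|h_\lambda\|_{L^\infty(I^n)}\lesssim\sum_k K(f,s;\mathcal{R}_k(X,L^\infty),L^\infty(I^n))$. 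Combining all the displays with $s=\varphi_X(t)$ gives the asserted equivalence. I expect the synchronization in this last step to be the main obstacle: the individual near-optimal decompositions must all be realized as truncations of the \emph{same} function $f$, so that replacing their heights by the common value $\max_k\lambda_k$ decreases each $\mathcal{R}_k$-norm while keeping the $L^\infty$-error comparable to $\sum_k\lambda_k$; this is exactly what lets one decomposition be nearly optimal for all $n$ spaces simultaneously.
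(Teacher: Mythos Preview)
The paper does not actually prove this theorem: it is quoted from the authors' earlier work \cite{Clavero-Soria} (see the sentence introducing Theorem~\ref{theorem: K-functional of R(X,Loo) and Loo}), so there is no proof in the present paper to compare against. Your argument, however, is correct and self-contained.

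A few comments on the three steps. Your identification
\[
K(f,s;\mathcal{R}_k(X,L^\infty),L^\infty(I^n))=K\bigl(\psi_k(f,L^\infty),s;X(I^{n-1}),L^\infty(I^{n-1})\bigr)
\]
is exact (not merely an equivalence), and the two inequalities you give are the right ones; the passage to $\psi_k^*(f,L^\infty)$ via the Luxemburg representation is justified precisely because the truncation formula $K(\phi,s;Y,L^\infty)=\inf_{\lambda\ge0}\bigl(\|(|\phi|-\lambda)_+\|_Y+s\lambda\bigr)$ depends only on $\phi^*$. The evaluation at $s=\varphi_X(t)$ is also clean; in the lower bound you should write $|g_0|$ rather than $g_0$ in the pointwise inequality, but this is cosmetic. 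Finally, the synchronization step for the intersection is exactly the point that needs care, and your solution---replace the individually near-optimal truncation heights $\lambda_k$ by the common height $\lambda=\max_k\lambda_k$, which can only decrease each $\|(\psi_k(f,L^\infty)-\lambda)_+\|_X$ while keeping $s\lambda\le s\sum_k\lambda_k$---is correct and gives the constant $2$ (and $n$ in the easy direction). The anticipated ``obstacle'' is thus genuinely resolved by the monotonicity of truncation, not merely circumvented.
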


\section{Sobolev embeddings in mixed norm spaces}\label{sec: Sobolev embedding in mixed norm spaces}
Our aim in this section is to study the Sobolev embedding of the form
\begin{align}\label{eq: intro W1Z->R(X,Loo)}
W^{1}Z(I^{n})\hookrightarrow \mathcal{R}(X,L^{\infty}),
\end{align}
extending the classical estimate \eqref{eq: Gagliardo-Niremberg} proved by Gagliardo \cite{Gagliardo} and Nirenberg \cite{Nirenberg}.
 In particular, we are interested in the following problems:
\begin{enumerate}[(i)]
	\item We would like to find the smallest  space of the form  $\mathcal{R}(X,L^{\infty})$ in \eqref{eq: intro W1Z->R(X,Loo)}, for a given r.i.\  $Z(I^{n})$.
	\item On the other hand, given a fixed range space $\mathcal{R}(X,L^{\infty}),$ we would like to provide a characterization of the largest r.i.\ domain space  satisfying  \eqref{eq: intro W1Z->R(X,Loo)}. 
	\end{enumerate}

\subsection{Necessary and sufficient conditions}Now, our main purpose is to find  necessary and sufficient conditions
on $X(I^{n-1})$  and $Z(I^{n})$ under which we have the embedding \eqref{eq: intro W1Z->R(X,Loo)}. 

For this, we shall establish the equivalence between \eqref{eq: intro W1Z->R(X,Loo)} and the  boundedness of a Hardy type operator, via an argument used by Kerman and Pick \cite{Kerman-Pick} to characterize Sobolev embeddings in r.i.\ spaces. Then,  this relation will be   a key
tool in determining the largest r.i.\  space and the smallest mixed norm space for \eqref{eq: intro W1Z->R(X,Loo)}.

Let us start with an auxiliary lemma. The proof, based on   a classical interpolation result due to Calder\'on (see \cite[Theorem~III.2.12]{Bennett}), follows the scheme of \cite[Lemma~4.1]{Cianchi}, so we do not include it here.

\begin{lemma}\label{lema: operador} Let $\beta>-1$  and let $Y(0,1)$ be an r.i.\ space. Then, 
\begin{align*}
\bigg\|\int^{1}_{t}s^{\beta}f(s)ds\bigg\|_{Y(0,1)}\lesssim  \|f\|_{Y(0,1)}, \ f\in Y(0,1).
\end{align*}
\end{lemma}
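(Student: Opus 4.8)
\textbf{Proof plan for Lemma~\ref{lema: operador}.}

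The plan is to recognize the operator $Tf(t)=\int_t^1 s^\beta f(s)\,ds$ as (a weighted version of) the adjoint Hardy operator and to control it by interpolating between two endpoint estimates, exactly in the spirit of Calder\'on's theorem as used in \cite[Lemma~4.1]{Cianchi}. First I would reduce to nonnegative $f$ and, since $Y(0,1)$ is an r.i.\ space, it suffices by the Luxemburg representation theorem and the Hardy--Littlewood--P\'olya principle to work with the rearranged picture; concretely, I would verify the pointwise estimate $\int_0^t (Tf)^*(s)\,ds \lesssim \int_0^t (Mf)^*(s)\,ds$ for a suitable bounded operator $M$, or more directly establish the two endpoint bounds below and quote Calder\'on.

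The two endpoints are: (1) $T\colon L^1(0,1)\to L^1(0,1)$ is bounded, which follows from Fubini together with $\int_0^s t^\beta\,dt = s^{\beta+1}/(\beta+1)<\infty$ for $\beta>-1$ (this is precisely where the hypothesis $\beta>-1$ is used); and (2) $T\colon L^\infty(0,1)\to L^\infty(0,1)$ is bounded, again because $\int_t^1 s^\beta\,ds \le \int_0^1 s^\beta\,ds = 1/(\beta+1)<\infty$. Having both endpoint bounds, Calder\'on's interpolation theorem \cite[Theorem~III.2.12]{Bennett} — which says that an operator bounded on $L^1$ and $L^\infty$ is automatically bounded on every r.i.\ space $Y(0,1)$ — yields $\|Tf\|_{Y(0,1)}\lesssim\|f\|_{Y(0,1)}$ with a constant depending only on $\beta$ and the r.i.\ structure. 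Alternatively, one proves the $K$-functional (or Calder\'on couple) estimate $K(Tf,\tau;L^1,L^\infty)\lesssim K(f,\tau;L^1,L^\infty)$ for all $\tau>0$ and invokes the fact that r.i.\ norms are exact interpolation norms for the couple $(L^1,L^\infty)$.

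The only mild obstacle is bookkeeping the dependence of the constant on $\beta$ as $\beta\downarrow -1$, and making sure the endpoint $L^1$ bound is set up with the weight $s^\beta$ on the correct side so that Fubini applies cleanly; neither of these is serious, and since this is a direct adaptation of \cite[Lemma~4.1]{Cianchi} via \cite[Theorem~III.2.12]{Bennett}, I would simply record the two endpoint estimates and cite Calder\'on's theorem, as the authors do.
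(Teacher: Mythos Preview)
Your proposal is correct and matches the approach indicated by the paper, which does not give a detailed proof but explicitly says it follows the scheme of \cite[Lemma~4.1]{Cianchi} via Calder\'on's interpolation result \cite[Theorem~III.2.12]{Bennett}. The two endpoint bounds you record (the $L^1$ bound via Fubini and $s^{\beta+1}\le 1$, the $L^\infty$ bound via $\int_0^1 s^\beta\,ds<\infty$) are precisely what is needed, and both use $\beta>-1$ as you note.
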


\begin{theorem}\label{theorem: reduccion medida finita} Let $X(I^{n-1})$ and $Z(I^{n})$ be  r.i.\ spaces.
 Then,  the following statements are equivalent: 
 \begin{enumerate}[(i)]\vspace{0.5cm}
 \item $
 W^{1}Z(I^{n})\hookrightarrow \mathcal{R}(X,L^{\infty});$ \vspace{0.5cm}
 \item  $
 \bigg\|\displaystyle\int^{1}_{t^{n^{\prime}}}f^{*}(s) s^{-1/n^{\prime}}ds\bigg\|_{\overline{X}(0,1)}\lesssim  \|f^{*}\|_{\overline{Z}(0,1)},\ \  f\in Z(I^{n})$.
  \end{enumerate}
  \end{theorem}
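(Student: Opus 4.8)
The plan is to establish the equivalence by reducing the Sobolev embedding to a one-dimensional inequality on rearrangements, using the $K$-functional machinery set up in the preliminaries. The guiding principle, following Kerman and Pick, is that an embedding between two scales of spaces can be detected at the level of $K$-functionals of the appropriate couples, and here the two relevant couples are $(W^{1}L^{1}, W^{1}L^{\infty})$ on the domain side and $(\mathcal{R}(X,L^{\infty}), L^{\infty})$ on the range side.

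\textbf{Step 1: The endpoint embedding.} First I would record the Gagliardo--Nirenberg embedding \eqref{eq: Gagliardo-Niremberg}, $W^{1}L^{1}(I^{n})\hookrightarrow\mathcal{R}(L^{1},L^{\infty})$, together with the trivial embedding $W^{1}L^{\infty}(I^{n})\hookrightarrow L^{\infty}(I^{n})$. Thus the identity operator maps the couple $(W^{1}L^{1},W^{1}L^{\infty})$ into the couple $(\mathcal{R}(L^{1},L^{\infty}),L^{\infty})$ boundedly at both endpoints.

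\textbf{Step 2: From (i) to (ii) via $K$-functionals.} Assume (i). Apply Theorem~\ref{theorem: interpolation operator} to the identity operator, giving
$$
K(u,t;\mathcal{R}(X,L^{\infty}),L^{\infty})\lesssim K(u,t;W^{1}Z(I^{n}),W^{1}L^{\infty}(I^{n}))
$$
once one also knows $W^{1}L^{\infty}\hookrightarrow L^{\infty}$ — but in fact the cleaner route is to interpolate between the $L^1$ and $L^\infty$ endpoints and use Theorem~\ref{teo: rango R(X,Loo)->Z}-type identities. Concretely, I would use the Calder\'on-style characterization: (i) holds iff for every $u$,
$$
K\bigl(u,\cdot;\mathcal{R}(X,L^{\infty}),L^{\infty}\bigr)\ \text{is controlled on the}\ Z\text{-scale by}\ K\bigl(u,\cdot;W^{1}L^{1},W^{1}L^{\infty}\bigr).
$$
Now I invoke the two explicit $K$-functional formulas available to us. By \eqref{eq: Kfunctional Sobolev} of DeVore--Scherer,
$$
K(u,t;W^{1}L^{1}(I^{n}),W^{1}L^{\infty}(I^{n}))\approx\int_{0}^{t}|D^{1}u|^{*}(s)\,ds .
$$
On the target side, Theorem~\ref{theorem: K-functional of R(X,Loo) and Loo} gives, after the substitution dictated by the fundamental function and by Theorem~\ref{teo: rango R(X,Loo)->Z} (where the exponent $n'$ first appears), an expression of the $K$-functional of $(\mathcal{R}(X,L^{\infty}),L^{\infty})$ in terms of $\bigl\|\psi_{k}^{*}(f,L^{\infty})\chi_{(0,t)}\bigr\|_{\overline{X}}$. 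Matching these two, and using that the extremal profile realizing the $\mathcal R(X,L^\infty)$-norm is the radial one so that $\psi_k^*$ can be replaced by $f^*$ composed with the change of variables $s\mapsto s^{n'}$, the embedding (i) becomes exactly the weighted inequality in (ii): the inner Hardy-type average $\int_{t^{n'}}^{1}f^{*}(s)s^{-1/n'}\,ds$ comes from re-expressing $|D^1 u|^*$ in terms of $u^*$ via the isoperimetric/P\'olya--Szeg\H{o} reduction, while the norm $\|\cdot\|_{\overline X(0,1)}$ on the left and $\|f^*\|_{\overline Z(0,1)}$ on the right are precisely the Luxemburg representations of $\mathcal R(X,L^\infty)$ and $W^1 Z$.

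\textbf{Step 3: From (ii) to (i).} Conversely, assume the weighted inequality (ii). Given $u\in W^{1}Z(I^{n})$, I would use the pointwise rearrangement estimate that for every $k$,
$$
\psi_{k}^{*}(u,L^{\infty})(t)\lesssim \int_{t^{n'}}^{1}|D^{1}u|^{*}(s)\,s^{-1/n'}\,ds + \|u\|_{\text{small term}},
$$
which is the quantitative form of the Gagliardo--Nirenberg argument (integrating the gradient along lines, then taking sup and rearranging). Feeding this into the definition of $\|u\|_{\mathcal{R}(X,L^{\infty})}=\sum_k\|\psi_k(u,L^{\infty})\|_{X(I^{n-1})}$, passing to rearrangements (legitimate since $X$ is r.i.\ and we can majorize by a radial rearrangement), and then applying (ii) with $f=|D^{1}u|$, yields $\|u\|_{\mathcal{R}(X,L^{\infty})}\lesssim \||D^1u|^*\|_{\overline Z(0,1)}\approx\|u\|_{W^1Z(I^n)}$. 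Lemma~\ref{lema: operador} is used here to absorb the lower-order contributions (the piece of the integral near $s=1$, handled with $\beta=-1/n'>-1$).

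\textbf{Main obstacle.} The delicate point is Step 2: justifying rigorously that the embedding (i) \emph{reduces} to the pair of $K$-functional estimates, i.e.\ that it is enough to test on the "worst" functions whose rearrangements saturate both sides simultaneously. This requires the Hardy--Littlewood--P\'olya principle to move from an arbitrary $u$ to its radial rearrangement without losing the gradient control — which is exactly where the P\'olya--Szeg\H{o} inequality enters — and a careful bookkeeping of the change of variables $t\mapsto t^{n'}$ relating the $(n-1)$-dimensional slice measure to the $n$-dimensional one. I expect the forward direction (i)$\Rightarrow$(ii) to be the harder half, since the reverse direction is essentially a direct estimate using the explicit Gagliardo--Nirenberg pointwise bound plus Lemma~\ref{lema: operador}.
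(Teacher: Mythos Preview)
Your two halves are essentially swapped relative to the paper, and the direction you flag as ``harder'' contains a real gap.

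For (i)$\Rightarrow$(ii) you try to run $K$-functional interpolation using (i) as one endpoint. But Theorem~\ref{theorem: interpolation operator} only \emph{produces} $K$-functional inequalities from known endpoint embeddings; it does not let you extract the scalar Hardy inequality (ii) from the embedding (i). Concretely, even if you combine (i) with $W^{1}L^{\infty}\hookrightarrow L^{\infty}$ to get $K(u,t;\mathcal R(X,L^{\infty}),L^{\infty})\lesssim K(u,t;W^{1}Z,W^{1}L^{\infty})$, the right-hand $K$-functional has no usable expression in terms of $|D^{1}u|^{*}$ alone when $Z$ is an arbitrary r.i.\ space, and there is no mechanism to recover (ii) from this. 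The paper does \emph{not} use interpolation or P\'olya--Szeg\H{o} here at all: it builds, for each $f\in Z(I^{n})$, the explicit radial test function
\[
u(x)=\int_{\omega_{n-1}^{n'}|x|^{n}}^{\omega_{n-1}^{n'}r^{n}} s^{-1/n'}f^{*}(s)\,ds,
\]
computes directly that $|\nabla u|^{*}\approx f^{*}$ (so $\|u\|_{W^{1}Z}\lesssim\|f^{*}\|_{\overline Z}$, using Lemma~\ref{lema: operador} for the zero-order term) and that $\psi_{k}(u,L^{\infty})$ is exactly the Hardy integral, so that applying (i) to this $u$ yields (ii). A truncation $f=f_{1}+f_{2}$ removes the support restriction. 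This test-function step is the missing idea in your Step~2.

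For (ii)$\Rightarrow$(i) your outline is close to correct, but the pointwise bound
\[
\psi_{k}^{*}(u,L^{\infty})(t)\lesssim \int_{t^{n'}}^{1}|D^{1}u|^{*}(s)\,s^{-1/n'}\,ds
\]
is precisely what needs proof, and the paper obtains it by $K$-functional interpolation---this is where Theorem~\ref{theorem: interpolation operator} and Theorem~\ref{theorem: K-functional of R(X,Loo) and Loo} actually enter. The relevant endpoint couple is $(W^{1}L^{1},W^{1}L^{n,1})\to(\mathcal R(L^{1},L^{\infty}),L^{\infty})$, not $(W^{1}L^{1},W^{1}L^{\infty})$: one needs $W^{1}L^{n,1}\hookrightarrow L^{\infty}$ so that the Holmstedt formula \eqref{eq: Holmstedt sobolev} produces exactly the weight $s^{-1/n'}$. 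After that, Hardy--Littlewood--P\'olya transfers the resulting inequality between $K$-functionals into the $\overline X$-norm, and (ii) closes the argument. So the interpolation machinery belongs in Step~3, not Step~2.
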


 \begin{proof} 
 First we prove that $(i)  \Rightarrow  (ii)$. As in the proof of Theorem~\ref{teo: mixta inclusion Z->R(X,Loo)}, we may assume that $I=(-a,b),$ with $a,b\in\mathbb R_{+}$ and $0<r<\min(a,b)$. Given any function  $f\in Z(I^{n}),$ with $\lambda_{f}(0)\leq \omega^{n^{\prime}}_{n-1}r^{n},$  we define 
 \begin{align*}
 u(x)=\begin{cases}
 \displaystyle{\int^{\omega^{n^{\prime}}_{n-1}r^{n}}_{\omega^{n^{\prime}}_{n-1}|x|^{n}}s^{-1/n^{\prime}}f^{*}(s)ds},&\textnormal{if $x\in B_{n}(0,r),$}\\
 0,&\textnormal{otherwise.}
 \end{cases}
 \end{align*}
Then,  by  \eqref{eq: radial function} and
the boundedness of the dilation operator in r.i.\ spaces, we get  
 \begin{align*}
\|u\|_{Z(I^{n})}&\lesssim  \bigg\|\int^{I^{n}}_{t}s^{-1/n^{\prime}}f^{*}(s)ds\bigg\|_{\overline{Z}(0,1)},
\end{align*}
and so, Lemma~\ref{lema: operador} gives
\begin{align}\label{eq: 3W1Z->R(X,Loo)}
\|u\|_{Z(I^{n})}\lesssim \|f^{*}\|_{\overline{Z}(0,1)}.
\end{align}
 On the other hand, we have
  \begin{align*}
 |\nabla u(x)|\approx f^{*}( \omega^{n^{\prime}}_{n-1}|x|^{n}),\ \ \textnormal{a.e. $x\in B_{n}(0,r),$}
 \end{align*}
 and $|\nabla u(x)|=0$ otherwise. So, using again  the boundedness of the dilation operator in r.i.\ spaces, we get
\begin{align}\label{eq: 4W1Z->R(X,Loo)}
\||\nabla u|\|_{Z(I^{n})}&\lesssim \|f^{*}\|_{\overline{Z}(0,1)}.
\end{align}
By hypothesis  $f\in Z(I^{n}),$ so inequalities \eqref{eq: 3W1Z->R(X,Loo)} and \eqref{eq: 4W1Z->R(X,Loo)} imply that  $u\in W^{1}Z(I^{n})$ and
 \begin{align*}
 \|u\|_{W^{1}Z(I^{n})}=\|u\|_{Z(I^{n})}+\||\nabla u|\|_{Z(I^{n})}\lesssim \|f^{*}\|_{\overline{Z}(0,1)}.
 \end{align*}
 Therefore, using  
 $W^{1}Z(I^{n})\hookrightarrow \mathcal{R}(X,L^{\infty}),$ we obtain
 \begin{align}\label{eq: 5W1Z->R(X,Loo)}
 \big\|u\big\|_{\mathcal{R}(X,L^{\infty})}\lesssim \big\|f^{*}\big\|_{\overline{Z}(0,1)}.
 \end{align}
 Now, let us compute $\|u\|_{\mathcal{R}(X,L^{\infty})}$. For this, we fix any $k\in \{1,\ldots,n\}.$ Then,  we have
 \begin{align*}
\psi_{k}(u,L^{\infty})(\widehat{x}_{k})=\begin{cases}\displaystyle{\int^{\omega^{n^{\prime}}_{n-1}r^{n}}_{\omega^{n^{\prime}}_{n-1}|\widehat{x}_{k}|^{n}}s^{-1/n^{\prime}}f^{*}(s)ds},&\textnormal{if $\widehat{x}_{k}\in B_{n-1}(0,r),$}\\
0,&\textnormal{otherwise.}
\end{cases}
 \end{align*}
 As a consequence,  using again  \eqref{eq: radial function}, we get
 \begin{align*}
  \|u\|_{\mathcal{R}(X,L^{\infty})}=\sum^{n}_{k=1}\|\psi^{*}_{k}(u,L^{\infty})\|_{\overline{X}(0,1)}=n\bigg\|\int^{1}_{t^{n^{\prime}}}s^{-1/n^{\prime}}f^{*}(s)ds\bigg\|_{\overline{X}(0,1)}.
 \end{align*}
 Thus, using \eqref{eq: 5W1Z->R(X,Loo)}, we obtain 
 \begin{align}\label{eq: 6W1Z->R(X,Loo)}
 \biggl\|\int^{1}_{t^{n^{\prime}}}s^{-1/n^{\prime}}f^{*}(s)ds\biggr\|_{\overline{X}(0,1)}\lesssim  \|f\|_{\overline{Z}(0,1)}.
 \end{align}
 This proves $(ii)$, for any  $f\in Z(I^{n}),$ with $\lambda_{f}(0)\leq \omega^{n^{\prime}}_{n-1}r^{n}.$ Now, let us consider any $f\in Z(I^{n}).$ We define
\begin{align*}
f_{1}(x)=\max\bigl(|f(x)|-f^{*}(\omega^{n^{\prime}}_{n-1}r^{n}),0\bigr)\,\textnormal{sgn}f(x),
\end{align*}
and 
\begin{align*}
f_{2}(x)=\min\bigl(|f(x)|,f^{*}(\omega^{n^{\prime}}_{n-1}r^{n})\bigr)\,\textnormal{sgn}f(x).
\end{align*}
We observe that $f=f_{1}+f_{2},$ 
\begin{align}\label{eq: 7W1Z->R(X,Loo)}
f^{*}_{1}(t)=\begin{cases}
f^{*}(t)-f^{*}(\omega^{n^{\prime}}_{n-1}r^{n}),& 0\leq t<\lambda_{f}(f^{*}(\omega^{n^{\prime}}_{n-1}r^{n})),\\
0,&\textnormal{otherwise},
\end{cases}
\end{align}
and 
\begin{align}\label{eq: 8W1Z->R(X,Loo)}
f^{*}_{2}(t)=\begin{cases}
f^{*}(\omega^{n^{\prime}}_{n-1}r^{n}), & 0\leq t<\lambda_{f}(f^{*}(\omega^{n^{\prime}}_{n-1}r^{n})),\\
f^{*}(t),&\textnormal{otherwise}.
\end{cases}
\end{align}
So, combining \eqref{eq: 7W1Z->R(X,Loo)} and \eqref{eq: 8W1Z->R(X,Loo)}, we have
$f^{*}=f^{*}_{1}+f^{*}_{2}.$ Using now  inequality \eqref{eq: 6W1Z->R(X,Loo)}, with $f$ replaced by $f_{1},$  we get
\begin{align}\label{eq: 9W1Z->R(X,Loo)}
\biggl\|\int^{1}_{t^{n^{\prime}}}s^{-1/n^{\prime}}f^{*}_{1}(s)ds\biggr\|_{\overline{X}(0,1)}\lesssim  \big\|f^{*}\big\|_{Z(0,1)}.
\end{align}
 On the other hand, by H\"older's inequality, we have
 \begin{align}\label{eq: 10W1Z->R(X,Loo)}
 \biggl\|\int^{1}_{t^{n^{\prime}}}s^{-1/n^{\prime}}f^{*}_{2}(s)ds\biggr\|_{\overline{X}(0,1)}\lesssim f^{**}(\omega^{n^{\prime}}_{n-1}r^{n})\lesssim \|f\|_{Z(0,1)}. 
 \end{align}
 As a consequence, using \eqref{eq: 9W1Z->R(X,Loo)} and \eqref{eq: 10W1Z->R(X,Loo)},  we get
\begin{align*}
\biggl\|\int^{1}_{t^{n^{\prime}}}s^{-1/n^{\prime}}f^{*}(s)ds\biggr\|_{\overline{X}(0,1)}&=\biggl\|\int^{1}_{t^{n^{\prime}}}s^{-1/n^{\prime}}\big(f^{*}_{1}(s)+f^{*}_{2}(s)\big)ds\biggr\|_{\overline{X}(0,1)}\\
&\leq \biggl\|\int^{1}_{t^{n^{\prime}}}s^{-1/n^{\prime}}f^{*}_{1}(s)ds\biggr\|_{\overline{X}(0,1)}\\
&\quad+\biggl\|\int^{1}_{t^{n^{\prime}}}s^{-1/n^{\prime}}f^{*}_{2}(s)ds\biggr\|_{\overline{X}(0,1)}\lesssim \|f^{*}\|_{Z(0,1)},
\end{align*}
 which is $(ii).$
 
 Conversely, let us suppose that $(ii)$ holds. We fix any  $f\in W^{1}Z(I^{n})$.  Combining the classical embedding on Lorentz spaces (cf. e.g. \cite{Oneil-1963, Talenti})
\begin{align*}
W^{1}L^{n,1}(I^{n})\hookrightarrow \mathcal{R}(L^{\infty},L^{\infty})=L^{\infty}(I^{n}),
\end{align*} 
with  Gagliardo-Nirenberg embedding \eqref{eq: Gagliardo-Niremberg}, and then applying Theorem~ \ref{theorem: interpolation operator}, we get 
\begin{align}\label{eq: 11W1Z->R(X,Loo)}
K(f,t; \mathcal{R}(L^{1},L^{\infty}), L^{\infty})\lesssim K(f,Ct; W^{1}L^{1}, W^{1}L^{n,1}), \ \ 0<t<1.
\end{align}
We have, by Theorem~\ref{theorem: K-functional of R(X,Loo) and Loo}, 
\begin{align}\label{eq: 12W1Z->R(X,Loo)}
K(f,t; \mathcal{R}(L^{1},L^{\infty}), L^{\infty})\approx \sum^{n}_{k=1}\int^{t}_{0}\psi^{*}_{k}(f,L^{\infty})(s)ds.
\end{align}
Moreover, using now \eqref{eq: Holmstedt sobolev}, we get
\begin{align*}
K(f,Ct; W^{1}L^{1}, W^{1}L^{n,1})&\approx \int^{(Ct)^{n^{\prime}}}_{0}|D^{1}f|^{*}(s)ds+ Ct \int^{1}_{(Ct)^{n^{\prime}}}s^{-1/n^{\prime}}|D^{1}f|^{*}(s)ds\\
&\approx \int^{(Ct)^{n^{\prime}}}_{0}s^{-1/n}\bigg(\int^{1}_{s}y^{-1/n^{\prime}}|D^{1}f|^{*}(y)dy\bigg)ds.
\end{align*}
So, by a change of variables, we obtain
 \begin{align}\label{eq: 13W1Z->R(X,Loo)}
 K(f,Ct; W^{1}L^{1}, W^{1}L^{n,1})&\approx \int^{t}_{0}\bigg(\int^{1}_{Cs^{n^{\prime}}}y^{-1/n^{\prime}}|D^{1}f|^{*}(y)dy\bigg)ds.
 \end{align}
  Therefore, taking into account  \eqref{eq: 11W1Z->R(X,Loo)},  \eqref{eq: 12W1Z->R(X,Loo)}, and \eqref{eq: 13W1Z->R(X,Loo)},  we obtain
\begin{align*}
\int^{t}_{0}\psi^{*}_{k}(f,L^{\infty})(s)ds\lesssim \int^{t}_{0}\bigg(\int^{1}_{Cs^{n^{\prime}}}y^{-1/n^{\prime}}|D^{1}f|^{*}(y)dy\bigg)ds,\ \ k\in \{1,\ldots,n\}.
\end{align*}
So,  using Hardy-Littlewood-P\'olya Principle, the boundedness of the dilation operator in r.i.\ spaces and $(ii),$ we get
\begin{align*}
\|f\|_{\mathcal{R}_{k}(X,L^{\infty})}&\lesssim \bigg\|\int^{1}_{s^{n^{\prime}}} y^{-1/n^{\prime}}|D^{1} f|^{*}(y)dy \bigg\|_{\overline{X}(0,1)}\lesssim \||D^{1}f|^{*}\|_{\overline{Z}(0,1)}\approx \|f\|_{W^{1}Z(I^{n})},
\end{align*}
for any $ k\in \{1,\ldots,n\}$, from which $(i)$ follows. 
 \end{proof}
 
 \begin{remark}\label{remark: extra condition}\textnormal{Using a duality argument, we observe that the statements proved in Theorem~\ref{theorem: reduccion medida finita}  are also equivalent to the following  additional condition:
 \begin{align*}
 \sup_{\|f\|_{X^{\prime}(I^{n-1})}\leq 1}\|f^{**}(t^{1/n^{\prime}})\|_{\overline{Z}^{\prime}(0,1)}=\sup_{\|g\|_{Z(I^{n})}\leq 1}\bigg\|\int^{1}_{s^{n^{\prime}}}g^{*}(t)t^{-1/n^{\prime}}dt\bigg\|_{\overline{X}(0,1)}<\infty.
 \end{align*}
 In fact, we fix any $f\in X^{\prime}(I^{n-1}),$ with  $\|f\|_{X^{\prime}(I^{n-1})}\leq 1.$ Then, by Fubini's theorem and H\"older's inequality, we get
  \begin{align*}
 \|f^{**}(t^{1/n^{\prime}})\|_{\overline{Z}^{\prime}(0,1)}&=
 \sup_{\|g\|_{Z(I^{n})}\leq 1}\int^{1}_{0}f^{*}(t)\Big(\int^{1}_{t^{n^{\prime}}}s^{-1/n^{\prime}}g^{*}(s)ds\Big)dt\\
 &\leq \sup_{\|g\|_{Z(I^{n})}\leq 1} \|f\|_{X^{\prime}(I^{n-1})}\bigg\|\int^{1}_{t^{n^{\prime}}}s^{-1/n^{\prime}}g^{*}(s)ds\bigg\|_{\overline{X}(0,1)}\\
 &\leq \sup_{\|g\|_{Z(I^{n})}\leq 1} \bigg\|\int^{1}_{t^{n^{\prime}}}s^{-1/n^{\prime}}g^{*}(s)ds\bigg\|_{\overline{X}(0,1)}.
 \end{align*} 
Therefore, we conclude that
 \begin{align*}
  \sup_{\|f\|_{X^{\prime}(I^{n-1})}\leq 1}\|f^{**}(t^{1/n^{\prime}})\|_{\overline{Z}^{\prime}(0,1)}\leq \sup_{\|g\|_{Z(I^{n})}\leq 1}\bigg\|\int^{1}_{t^{n^{\prime}}}s^{-1/n^{\prime}}g^{*}(s)ds\bigg\|_{\overline{X}(0,1)}.
 \end{align*}
Applying the same arguments as before, we obtain the converse inequality.}
 \end{remark}
 
 \subsection{Characterization of the optimal range}
 Now, we fix  an r.i.\ space  $Z(I^{n}).$  We shall provide a description of the smallest space of the form $\mathcal{R}(X,L^{\infty})$ satisfying  
\begin{align*}
W^{1}Z(I^{n})\hookrightarrow \mathcal{R}(X,L^{\infty}).
\end{align*}

It is important to note that Theorem~\ref{theorem: reduccion medida finita}, together with Remark~\ref{remark: extra condition}, relate this problem with that of finding the largest r.i.\ space $Y(I^{n-1})$  such that
\begin{align*}
H^{\prime}: Y(I^{n-1})\rightarrow \overline{Z}^{\prime}(0,1)
\end{align*} 
is bounded, where $H^{\prime}$ is the conjugate Hardy type operator:
\begin{align}\label{hardytypeoper}
H^{\prime}f(t)=f^{**}(t^{1/n^{\prime}}), \ \ f\in\mathcal{M}(I^{n-1}).
\end{align}
Hence, it is natural to introduce a new space, denoted by $Y(I^{n-1}),$ consisting of all $f\in \mathcal{M}(I^{n-1})$ such that
\begin{align}\label{eq: 1W1Z->R(X,Loo) norm}
\|f\|_{Y(I^{n-1})}=\|f^{**}(t^{-1/n^{\prime}})\|_{\overline{Z}^{\prime}(0,1)}<\infty. 
\end{align}
It is not difficult to verify that  $Y(I^{n-1})$ is an r.i.\ space equipped with the norm $\|\cdot\|_{Y(I^{n-1})}.$ Hence, using \eqref{hardytypeoper}, \eqref{eq: 1W1Z->R(X,Loo) norm}, and a duality argument, we have that its associate space $Y^{\prime}(I^{n-1})$ verifies that 
\begin{align}\label{eq: 2W1Z->R(X,Loo) rango}
H:\overline{Z}(0,1) \rightarrow \overline{Y}^{\prime}(I^{n-1}),
\end{align} 
where $H$ is the  Hardy type operator:
\begin{align}\label{eq: Hardy type operator}
Hf(t)=\int^{1}_{t^{n^{\prime}}}s^{-1/n^{\prime}}f^{*}(s)ds, \ \ f\in\mathcal{M}(I^{n}).
\end{align}

In order to clarify the notation used later, note  that if we denote by
\begin{align}\label{eq: 1W1Z->R(X,Loo) rango}
X_{W^{1}Z,L^{\infty}}(I^{n-1}):=Y^{\prime}(I^{n-1}),
\end{align}
then, \cite[Theorem I.2.7]{Bennett} implies that
\begin{align*}
Y(I^{n-1})=(Y^{\prime})^{\prime}(I^{n-1})=X^{\prime}_{W^{1}Z,L^{\infty}}(I^{n-1}).
\end{align*}

\begin{theorem}\label{teo: W1Z->R(X,Loo) rango} Let $Z(I^{n})$ be an r.i.\ space and let  $X_{W^{1}Z,L^{\infty}}(I^{n-1})$ be the r.i.\ space defined in \eqref{eq: 1W1Z->R(X,Loo) rango}. Then, the Sobolev embedding
\begin{align} \label{eq: 3W1Z->R(X,Loo) rango}
W^{1}Z(I^{n})\hookrightarrow \mathcal{R}(X_{W^{1}Z,L^{\infty}},L^{\infty}),
\end{align}
holds. Moreover, $\mathcal{R}(X_{W^{1}Z,L^{\infty}},L^{\infty})$ is the smallest space of the form $\mathcal{R}(X,L^{\infty})$ that verifies \eqref{eq: 3W1Z->R(X,Loo) rango}. 
\end{theorem}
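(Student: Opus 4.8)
The plan is to derive both claims from the equivalence established in Theorem~\ref{theorem: reduccion medida finita}, reformulated in the dual form of Remark~\ref{remark: extra condition}, together with the monotonicity of the correspondence $X\mapsto\mathcal{R}(X,L^{\infty})$ recorded in Theorem~\ref{teo: mixta inclusion R(X1,Loo)->R(X2,Loo)}. The guiding observation is that the r.i.\ space $Y(I^{n-1})$ of \eqref{eq: 1W1Z->R(X,Loo) norm} has been tailored so that its norm coincides, up to the Luxemburg representation, with $\|H'f\|_{\overline{Z}'(0,1)}$, where $H'$ is the conjugate Hardy operator \eqref{hardytypeoper}; equivalently, by the identity $Y=(Y')'=X'_{W^{1}Z,L^{\infty}}$ (\cite[Theorem~I.2.7]{Bennett}) and a duality argument, $H\colon\overline{Z}(0,1)\to\overline{Y}'(0,1)$ is bounded, where $H$ is as in \eqref{eq: Hardy type operator}.

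\emph{The embedding \eqref{eq: 3W1Z->R(X,Loo) rango} holds.} I would apply Theorem~\ref{theorem: reduccion medida finita} with $X=X_{W^{1}Z,L^{\infty}}$: by Remark~\ref{remark: extra condition}, statement $(i)$ there is equivalent to the finiteness of
\begin{align*}
\sup_{\|f\|_{X'_{W^{1}Z,L^{\infty}}(I^{n-1})}\le 1}\bigl\|f^{**}(t^{1/n'})\bigr\|_{\overline{Z}'(0,1)}.
\end{align*}
Since $X'_{W^{1}Z,L^{\infty}}(I^{n-1})=Y(I^{n-1})$ and, by the very definition \eqref{eq: 1W1Z->R(X,Loo) norm} of the norm of $Y(I^{n-1})$, the quantity $\|f^{**}(t^{1/n'})\|_{\overline{Z}'(0,1)}$ equals $\|f\|_{Y(I^{n-1})}$, the displayed supremum is exactly $1<\infty$. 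Hence condition $(ii)$ of Theorem~\ref{theorem: reduccion medida finita} is satisfied and \eqref{eq: 3W1Z->R(X,Loo) rango} follows.

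\emph{Minimality.} Suppose $\mathcal{R}(X,L^{\infty})$ is a mixed norm space with $W^{1}Z(I^{n})\hookrightarrow\mathcal{R}(X,L^{\infty})$. By Theorem~\ref{theorem: reduccion medida finita} and Remark~\ref{remark: extra condition} this yields $\|f^{**}(t^{1/n'})\|_{\overline{Z}'(0,1)}\lesssim\|f\|_{X'(I^{n-1})}$ for every $f\in\mathcal{M}(I^{n-1})$, that is, $X'(I^{n-1})\hookrightarrow Y(I^{n-1})$. Passing to associate spaces reverses this inclusion, and using $Y'=X_{W^{1}Z,L^{\infty}}$ and $X''=X$ (\cite[Theorem~I.2.7]{Bennett}) we get $X_{W^{1}Z,L^{\infty}}(I^{n-1})\hookrightarrow X(I^{n-1})$; Theorem~\ref{teo: mixta inclusion R(X1,Loo)->R(X2,Loo)} then promotes this to $\mathcal{R}(X_{W^{1}Z,L^{\infty}},L^{\infty})\hookrightarrow\mathcal{R}(X,L^{\infty})$, which together with the first part is the asserted minimality.

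\emph{Expected difficulty.} No genuinely new difficulty arises at this stage, since the analytic core has been absorbed into Theorem~\ref{theorem: reduccion medida finita} and the construction of $Y(I^{n-1})$. The only points requiring attention are routine but must be handled with care: verifying that $Y(I^{n-1})$ is a bona fide r.i.\ Banach function space, so that $Y''=Y$ and the associate operation reverses embeddings, and keeping the exponents consistent when translating between $H$ (inner cut-off $t^{n'}$, kernel $s^{-1/n'}$), its conjugate $H'$ (entering through $f^{**}(t^{1/n'})$), and the Luxemburg representations $\overline{X}(0,1),\overline{Z}(0,1),\overline{Y}(0,1)$.
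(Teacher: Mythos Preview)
Your proposal is correct and follows essentially the same route as the paper: both parts are reduced to Theorem~\ref{theorem: reduccion medida finita} in its dual form (Remark~\ref{remark: extra condition}), the embedding being immediate from the definition of $Y=X'_{W^{1}Z,L^{\infty}}$, and minimality coming from $X'\hookrightarrow Y$, passage to associate spaces, and Theorem~\ref{teo: mixta inclusion R(X1,Loo)->R(X2,Loo)}. The only cosmetic difference is that for the first part the paper cites the already recorded boundedness \eqref{eq: 2W1Z->R(X,Loo) rango} of $H$ directly, whereas you verify the equivalent dual supremum equals $1$; these are the same observation.
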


\begin{proof}The embedding  \eqref{eq: 3W1Z->R(X,Loo) rango} follows directly from Theorem~\ref{theorem: reduccion medida finita} together with \eqref{eq: 2W1Z->R(X,Loo) rango}. Thus, to complete the proof, it only remains  to see that    $\mathcal{R}(X_{W^{1}Z,L^{\infty}},L^{\infty})$ is the smallest space of the form $\mathcal{R}(X,L^{\infty})$ satisfying \eqref{eq: 3W1Z->R(X,Loo) rango}. Hence, we shall see that if a mixed norm space $\mathcal{R}(X,L^{\infty})$ verifies
\begin{align}\label{eq: 4W1Z->R(X,Loo) rango}
W^{1}Z(I^{n})\hookrightarrow \mathcal{R}(X,L^{\infty}),
\end{align}
then
\begin{align*}
\mathcal{R}(X_{W^{1}Z,L^{\infty}},L^{\infty})\hookrightarrow \mathcal{R}(X,L^{\infty}).
\end{align*}
 We fix any $g\in X^{\prime}(I^{n-1}).$ Then, combining  \eqref{eq: 4W1Z->R(X,Loo) rango} with Remark~\ref{remark: extra condition}, we get 
\begin{align*}
\|g^{**}(t^{1/n^{\prime}})\|_{\overline{Z}^{\prime}(0,1)} \lesssim  \|g\|_{X^{\prime}(I^{n-1})}.
 \end{align*}
Therefore, using now \eqref{eq: 1W1Z->R(X,Loo) norm}, we obtain
 \begin{align*}
X^{\prime}(I^{n-1})\hookrightarrow X^{\prime}_{W^{1}Z,L^{\infty}}(I^{n-1}).
 \end{align*}
 As a consequence,  \cite[Proposition~I.2.10]{Bennett} and Theorem~\ref{teo: mixta inclusion R(X1,Loo)->R(X2,Loo)} imply that \eqref{eq: 3W1Z->R(X,Loo) rango} holds, as we wanted to show.  \end{proof}

Now, we shall present some applications of Theorem~\ref{teo: W1Z->R(X,Loo) rango}. In particular, we shall see that \eqref{eq: Gagliardo-Niremberg} cannot be improved within the class of  spaces of the form $\mathcal{R}(X,L^{\infty}).$ This should be understood as follows: if we replace the range space  in
\begin{align*}
W^{1}L^{1}(I^{n})\hookrightarrow \mathcal{R}(L^{1},L^{\infty}),
\end{align*}
by a smaller mixed norm space, say $\mathcal{R}(X,L^{\infty})$, then the resulting embedding
\begin{align*}
W^{1}L^{1}(I^{n})\hookrightarrow \mathcal{R}(X,L^{\infty})
\end{align*}
cannot longer be true.

\begin{corollary}\label{corollary: W1Lp->R(X,Loo) rango}
Let $1\leq p<n.$ Then, the mixed norm space $\mathcal{R}(L^{p(n-1)/(n-p),p},L^{\infty})$ is the smallest space of the form $\mathcal{R}(X,L^{\infty})$ satisfying 
\begin{align*}
W^{1}L^{p}(I^{n})\hookrightarrow \mathcal{R}(L^{p(n-1)/(n-p),p},L^{\infty}).
\end{align*}
\end{corollary}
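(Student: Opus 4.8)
The corollary is the special case $Z(I^n)=L^p(I^n)$ of Theorem~\ref{teo: W1Z->R(X,Loo) rango}, so the strategy is simply to compute the space $X_{W^1 L^p,L^\infty}(I^{n-1})$ defined via \eqref{eq: 1W1Z->R(X,Loo) norm}--\eqref{eq: 1W1Z->R(X,Loo) rango} and identify it, up to equivalence of norms, with the Lorentz space $L^{p(n-1)/(n-p),p}(I^{n-1})$. Concretely, I would first recall that $\overline{Z}(0,1)=L^p(0,1)$ and hence $\overline{Z}'(0,1)=L^{p'}(0,1)$. By definition, $Y(I^{n-1})=X'_{W^1 L^p,L^\infty}(I^{n-1})$ consists of all $f$ with
\begin{align*}
\|f\|_{Y(I^{n-1})}=\bigl\|f^{**}(t^{1/n'})\bigr\|_{L^{p'}(0,1)}<\infty,
\end{align*}
(taking the corrected form of \eqref{eq: 1W1Z->R(X,Loo) norm} with the exponent $1/n'$ as in \eqref{hardytypeoper}), so the whole task is to evaluate this one expression.

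**Main computation.** A change of variables $t=u^{n'}$ turns $\bigl\|f^{**}(t^{1/n'})\bigr\|_{L^{p'}(0,1)}$ into a weighted $L^{p'}$ norm of $f^{**}(u)$, namely $\bigl(\int_0^1 f^{**}(u)^{p'} u^{n'-1}\,du\bigr)^{1/p'}$ up to a constant. Writing $u^{n'-1}=u^{p'(1/q-1/p')}$ for the appropriate $q$, one reads off that this is (equivalent to) the norm of $f$ in the Lorentz space $L^{q,p'}$ built with $f^{**}$, where $1/q = 1/p' - (n'-1)/p' \cdot (1/p')^{-1}\cdot\ldots$; I would solve the elementary identity for the first index: matching $t^{1/q - 1/p'}$ against $t^{(n'-1)/p'}$ after the substitution yields $1/q = 1/p' + (n'-1)/p'$ scaled correctly, which simplifies (using $1/n+1/n'=1$) to $1/q = \frac{1}{p'}\cdot\frac{n}{n-1}$... — in any case the arithmetic produces $q = \frac{(n-1)p'}{n}$, and then, since $L^{q,p'}$ with $f^{**}$ and with $f^*$ give equivalent norms in the range where the weight is decreasing (as noted in the Preliminaries for Lorentz spaces), we get $Y(I^{n-1})=L^{(n-1)p'/n,\,p'}(I^{n-1})$ with equivalence of norms, where for $p=1$ one must treat $p'=\infty$ separately and check the endpoint directly. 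Finally, taking associate spaces and using the standard duality $\bigl(L^{r,s}\bigr)'=L^{r',s'}$ for Lorentz spaces gives
\begin{align*}
X_{W^1 L^p,L^\infty}(I^{n-1})=Y'(I^{n-1})=L^{p(n-1)/(n-p),\,p}(I^{n-1}),
\end{align*}
after one checks that $\bigl(\frac{(n-1)p'}{n}\bigr)' = \frac{p(n-1)}{n-p}$ and $(p')'=p$ — both routine exponent manipulations using $1/n+1/n'=1$ and $1/p+1/p'=1$.

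**Conclusion and the delicate point.** Once $X_{W^1 L^p,L^\infty}(I^{n-1})$ is identified with $L^{p(n-1)/(n-p),p}(I^{n-1})$, Theorem~\ref{teo: W1Z->R(X,Loo) rango} immediately gives both the embedding $W^1 L^p(I^n)\hookrightarrow \mathcal{R}(L^{p(n-1)/(n-p),p},L^\infty)$ and its optimality among spaces of the form $\mathcal{R}(X,L^\infty)$, so nothing further is needed. The only genuinely delicate steps are (a) getting the exponent arithmetic exactly right, in particular verifying that the target first index $\frac{p(n-1)}{n-p}$ lies in the admissible range $(1,\infty)$ (which holds precisely because $1\le p<n$, with the value $1$ attained at $p=1$, forcing the use of the $L^{1,\infty}$/$L^{n',1}$-type endpoint conventions from the Preliminaries), and (b) the justification that replacing $f^{**}$ by $f^*$ does not change the space — which is exactly the situation, flagged in the Preliminaries, where the defining weight is non-increasing, so the quasi-norm is already a norm and the two are equivalent. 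I expect the exponent bookkeeping across the two successive dualities to be the main place where care is required; everything else is a direct appeal to Theorem~\ref{teo: W1Z->R(X,Loo) rango} and standard Lorentz-space duality.
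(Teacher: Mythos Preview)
Your approach is essentially the same as the paper's: apply Theorem~\ref{teo: W1Z->R(X,Loo) rango} with $Z=L^p$, compute $\|f^{**}(t^{1/n'})\|_{L^{p'}(0,1)}$ via the change of variables $t=u^{n'}$, identify the result as the Lorentz norm $\|f\|_{L^{p'(n-1)/n,\,p'}}$, and then take associates. The paper carries out exactly this computation, invoking Hardy's inequality explicitly for the upper bound and the trivial $f^{*}\le f^{**}$ for the lower bound, and then cites \cite[Theorem~IV.4.7]{Bennett} for Lorentz duality.

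One small correction: your justification for replacing $f^{**}$ by $f^{*}$ is misstated. After the substitution the weight is $u^{1/q-1/p'}$ with $1/q-1/p'=\frac{1}{(n-1)p'}>0$, so the weight is \emph{increasing}, not decreasing; the passage in the Preliminaries about non-increasing weights therefore does not apply here. The equivalence you need holds instead because the first Lorentz index $q=p'(n-1)/n$ lies strictly between $1$ and $\infty$ when $1<p<n$, which is precisely the range in which Hardy's inequality gives $\|t^{1/q}f^{**}(t)\|_{L^{p'}(dt/t)}\approx\|t^{1/q}f^{*}(t)\|_{L^{p'}(dt/t)}$. This is exactly what the paper invokes. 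With that fix (and the separate $p=1$ endpoint you already flagged), your argument is complete and matches the paper's.
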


\begin{proof} 
We prove this result only when $1<p<n$ ($p=1$ is easier). Theorem~\ref{teo: W1Z->R(X,Loo) rango},  with $Z(I^{n})$ replaced by $L^{p}(I^{n}),$ gives 
\begin{align*}
\|f\|_{X^{\prime}_{W^{1}L^{p},L^{\infty}}(I^{n-1})}&=\|f^{**}(t^{1/n^{\prime}})\|_{L^{p^{\prime}}(0,1)}\\
&\approx \bigg(\int^{1}_{0}t^{-(n-p)/((n-1)(p-1))-1}\bigg(\int^{t}_{0}f^{*}(s)ds\bigg)^{p^{\prime}} dt\bigg)^{1/p^{\prime}}. 
\end{align*}
Since $1<p<n,$  we may apply Hardy's inequalities  \cite[Lemma~III.3.9]{Bennett} to  get   
\begin{align}\label{eq: 1W1Lp->R(X,Loo) rango}
\|f\|_{X^{\prime}_{W^{1}L^{p},L^{\infty}}(I^{n-1})}&\lesssim \|f\|_{L^{p^{\prime}(n-1)/n,p^{\prime}}(I^{n-1})}. 
\end{align}
On the other hand,  we have
\begin{align}\label{eq: 2W1Lp->R(X,Loo) rango}
\|f\|_{X^{\prime}_{W^{1}L^{p},L^{\infty}}(I^{n-1})} \gtrsim \|f\|_{L^{p^{\prime}(n-1)/n,p^{\prime}}(I^{n-1})}.
\end{align}
As a consequence, combining \eqref{eq: 1W1Lp->R(X,Loo) rango} and \eqref{eq: 2W1Lp->R(X,Loo) rango}, we get
\begin{align*}
X^{\prime}_{W^{1}L^{p},L^{\infty}}(I^{n-1})=L^{p^{\prime}(n-1)/n ,p^{\prime}}(I^{n-1}),
\end{align*}
and hence,  using \cite[Theorem~IV.4.7]{Bennett}, we have that 
\begin{align*}
X_{W^{1}L^{p},L^{\infty}}(I^{n-1})=L^{p(n-1)/(n-p),p}(I^{n-1}),
\end{align*}
from which the result follows.
\end{proof}

Now, we shall apply Theorem~\ref{teo: W1Z->R(X,Loo) rango} to the  so-called  limiting or critical
case of the classical Sobolev embedding.

\begin{corollary}\label{corollary: W1Ln->R(X,Loo) rango}
The mixed norm space $\mathcal{R}(L^{\infty,n;-1},L^{\infty})$  is the smallest space of the form $\mathcal{R}(X,L^{\infty})$ satisfying 
 \begin{align}\label{eq: W1Ln->R(X,Loo) rango} 
W^{1}L^{n}(I^{n})\hookrightarrow \mathcal{R}(L^{\infty,n;-1},L^{\infty}).\end{align}
\end{corollary}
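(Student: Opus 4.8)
The plan is to apply Theorem~\ref{teo: W1Z->R(X,Loo) rango} with $Z(I^{n})=L^{n}(I^{n})$ and then to identify the r.i.\ domain space it produces. Since $\overline{L^{n}}(0,1)=L^{n}(0,1)$, and hence $\overline{L^{n}}^{\prime}(0,1)=L^{n^{\prime}}(0,1)$, formulas \eqref{eq: 1W1Z->R(X,Loo) norm} and \eqref{eq: 1W1Z->R(X,Loo) rango} give
\begin{align*}
X_{W^{1}L^{n},L^{\infty}}(I^{n-1})=\bigl(X^{\prime}_{W^{1}L^{n},L^{\infty}}\bigr)^{\prime}(I^{n-1}),\qquad
\|f\|_{X^{\prime}_{W^{1}L^{n},L^{\infty}}(I^{n-1})}=\bigl\|f^{**}(t^{1/n^{\prime}})\bigr\|_{L^{n^{\prime}}(0,1)},
\end{align*}
and the change of variables $u=t^{1/n^{\prime}}$ in the inner integral turns the last norm into $\bigl(\int_{0}^{1}u^{-1}(\int_{0}^{u}f^{*}(s)\,ds)^{n^{\prime}}du\bigr)^{1/n^{\prime}}$, up to equivalence.

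This is exactly where the argument of Corollary~\ref{corollary: W1Lp->R(X,Loo) rango} breaks down: the weight exponent that made Hardy's inequality applicable there is $(n-p)/((n-1)(p-1))$, and it vanishes at $p=n$, so the last expression is \emph{not} equivalent to any Lorentz norm; the operator $f^{*}\mapsto f^{**}$ persists and a logarithmic factor appears. The key step is therefore to establish
\begin{align}\label{eq: clave}
\biggl(\int_{0}^{1}\frac{1}{u}\Bigl(\int_{0}^{u}f^{*}(s)\,ds\Bigr)^{n^{\prime}}du\biggr)^{1/n^{\prime}}\approx \|f\|_{(L^{\infty,n;-1})^{\prime}(I^{n-1})},
\end{align}
which is the (endpoint) associate-space duality for Lorentz--Zygmund spaces with first index $\infty$; for this one may quote \cite{Bennett-Rudnick} (compare also the treatment of the critical Sobolev embedding in \cite{Kerman-Pick,Edmunds-Kerman-Pick}). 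A self-contained verification of \eqref{eq: clave} reduces to two one-dimensional weighted estimates and crucially exploits the monotonicity of $f^{*}$: for ``$\lesssim$'' one pairs $\int_{0}^{1}f^{*}g$ (over nonincreasing $g$ with $\|g\|_{L^{\infty,n;-1}}\le1$) against the weight $u^{-1/n}[1+\log(1/u)]^{-1}$ and uses H\"older's inequality with exponents $n,n^{\prime}$ together with the bound $g(u)\lesssim[1+\log(1/u)]^{1/n^{\prime}}$ coming from the fundamental function of $L^{\infty,n;-1}$; for ``$\gtrsim$'' one tests the supremum defining the associate norm against an appropriate nonincreasing near-extremal. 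In the same circle of ideas, the embedding \eqref{eq: W1Ln->R(X,Loo) rango} itself can be obtained straight from Theorem~\ref{theorem: reduccion medida finita}: condition~(ii) there, with $\overline{Z}=L^{n}(0,1)$ and $\overline{X}=L^{\infty,n;-1}(0,1)$, reduces after the substitutions $u=t^{n^{\prime}}$ and $u=e^{-x}$ to the weighted Hardy inequality $\bigl\|(1+x)^{-1}\int_{0}^{x}\Phi(y)\,dy\bigr\|_{L^{n}(0,\infty)}\lesssim\|\Phi\|_{L^{n}(0,\infty)}$, which is valid because $\sup_{r>0}\bigl(r/(1+r)\bigr)^{1/n^{\prime}}<\infty$.

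Granting \eqref{eq: clave} we obtain $X^{\prime}_{W^{1}L^{n},L^{\infty}}(I^{n-1})=(L^{\infty,n;-1})^{\prime}(I^{n-1})$, and taking associate spaces once more (using $X^{\prime\prime}=X$, \cite[Theorem~I.2.7]{Bennett}) we conclude $X_{W^{1}L^{n},L^{\infty}}(I^{n-1})=L^{\infty,n;-1}(I^{n-1})$. Then Theorem~\ref{teo: W1Z->R(X,Loo) rango} yields at once both the embedding \eqref{eq: W1Ln->R(X,Loo) rango} and the fact that $\mathcal{R}(L^{\infty,n;-1},L^{\infty})$ is the smallest space of the form $\mathcal{R}(X,L^{\infty})$ for which it holds. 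The main obstacle is precisely the equivalence \eqref{eq: clave}: it is the limiting Hardy inequality at which the subcritical computation of Corollary~\ref{corollary: W1Lp->R(X,Loo) rango} degenerates, forcing the appearance of the operator $f\mapsto f^{**}$ and of the logarithmic weight $[1+\log(1/t)]^{-1}$ that defines $L^{\infty,n;-1}$.
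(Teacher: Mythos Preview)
Your strategy coincides with the paper's: apply Theorem~\ref{teo: W1Z->R(X,Loo) rango} with $Z=L^{n}$, compute $\|f\|_{X'_{W^{1}L^{n},L^{\infty}}}=\|f^{**}(t^{1/n'})\|_{L^{n'}(0,1)}$, and then identify this functional as the associate norm of $L^{\infty,n;-1}$. The paper carries out this last identification by invoking Theorem~\ref{theorem: associate norm} (quoted from \cite{Edmunds-Kerman-Pick}) with $p=n'$ and $v(t)=t^{1/n}$: the three hypotheses are immediate (the decisive one, $\int_{0}^{1}t^{-p}v(t)^{p}\,dt=\int_{0}^{1}t^{-1}\,dt=\infty$, is exactly the borderline separating $p=n$ from $p<n$), and the conclusion gives $w(t)\approx t^{-1/n}[1+\log(1/t)]^{-1}$, hence $X_{W^{1}L^{n},L^{\infty}}=L^{\infty,n;-1}$.

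The only problematic part of your write-up is the ``self-contained verification'' of \eqref{eq: clave}. First, the labels are reversed: bounding $\int_{0}^{1}f^{*}g$ over the unit ball of $L^{\infty,n;-1}$ is an estimate of the \emph{right}-hand side of \eqref{eq: clave}, so it addresses ``$\gtrsim$'', not ``$\lesssim$''. More seriously, the argument itself does not close. Once you replace $g$ by the pointwise majorant $[1+\log(1/u)]^{1/n'}$ coming from the fundamental function and apply H\"older with exponents $n,n'$ against the weight $u^{-1/n}[1+\log(1/u)]^{-1}$, you are left either with the divergent complementary integral $\int_{0}^{1}u^{-1}[1+\log(1/u)]\,du=\infty$ or, under the alternative splitting, with an extra factor $[1+\log(1/u)]^{n'}$ on the $f^{*}$ side that is not dominated by $(f^{**})^{n'}$. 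The pointwise bound on $g$ is simply too crude here; the full $L^{\infty,n;-1}$ norm of $g$ must enter, and this is precisely what Theorem~\ref{theorem: associate norm} packages. Your citation of \cite{Edmunds-Kerman-Pick} is therefore what actually carries the step; the sketch should be replaced by a direct appeal to that result, as the paper does. (Your separate derivation of the embedding via Theorem~\ref{theorem: reduccion medida finita} and the weighted Hardy inequality on $(0,\infty)$ is correct but redundant: Theorem~\ref{teo: W1Z->R(X,Loo) rango} already yields both the embedding and its optimality once $X_{W^{1}L^{n},L^{\infty}}$ has been identified.)
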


The  proof will be an immediate consequence of Theorem~\ref{teo: W1Z->R(X,Loo) rango} and the
following  result given in \cite{Edmunds-Kerman-Pick}.

\begin{theorem}\label{theorem: associate norm}Let $1<p<\infty$ and let $v$ be a weight on $(0,1)$ satisfying the following properties:
\begin{enumerate}[ (i)]
\item $\int^{1}_{0}v(t)dt<\infty;$ \vspace{0.2 cm}
\item $\int^{1}_{0}t^{-p}v(t)^{p}dt=\infty;$ \vspace{0.2 cm}
\item $\int^{r}_{0}v(t)^{p}dt\lesssim r^{p}\big(1+\int^{1}_{r}t^{-p}v(t)^{p}dt\big),$ $0<r<1.$\vspace{0.2 cm}
\end{enumerate}
Then, the r.i.\ norm defined as
\begin{align*}
\|f\|_{X(0,1)}=\|v(t)f^{**}(t)\|_{L^{p}(0,1)},  \  \ f\in\mathcal{M}(0,1)
\end{align*}
has associate norm
\begin{align*}
\|g\|_{X^{\prime}(0,1)}=\|w(t)g^{*}(t)\|_{L^{p^{\prime}}(0,1)},  \  \ g\in\mathcal{M}(0,1),
\end{align*}
where
\begin{align*}
w(t)^{p^{\prime}}=\frac{d}{dt}\bigg[\bigg(1+\int^{1}_{t}s^{-p}v(s)^{p}ds\bigg)^{1-p^{\prime}}\bigg], \  \ 0<t<1.
\end{align*}
\end{theorem}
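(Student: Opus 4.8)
The plan is to prove the two-sided estimate $\|g\|_{X'(0,1)}\approx\|w\,g^*\|_{L^{p'}(0,1)}$ (the identity being understood, as customary in this circle of ideas, up to constants depending only on $p$ and on the constant implicit in (iii)); this is the associate-norm computation of Edmunds, Kerman and Pick, and the argument proceeds as follows. By the Hardy--Littlewood inequality and the rearrangement invariance of $\|\cdot\|_{X(0,1)}$ one reduces to monotone functions, so that
\begin{align*}
\|g\|_{X'(0,1)}=\sup\Big\{\int^1_0 f(t)g^*(t)\,dt:\ 0\le f\downarrow,\ \big\|v f^{**}\big\|_{L^p(0,1)}\le 1\Big\}.
\end{align*}
Since $F(t):=\int^t_0 f$ is concave and vanishes at $0$, the ratio $f^{**}(t)=F(t)/t$ is nonincreasing and $\int^s_0 f=F(s)\le\int^s_0 f^{**}$ for every $s\in(0,1)$; integrating this inequality against the nonincreasing function $g^*$ gives $\int^1_0 fg^*\le\int^1_0 f^{**}g^*$, and as $f^{**}$ is itself nonincreasing this yields
\begin{align*}
\|g\|_{X'(0,1)}\le S(g):=\sup\Big\{\int^1_0 h(t)g^*(t)\,dt:\ 0\le h\downarrow,\ \|vh\|_{L^p(0,1)}\le 1\Big\}.
\end{align*}

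The quantity $S(g)$ is the dual, on the cone of nonincreasing functions, of the weighted space $L^p(v^p\,dt)$, so the duality principle for monotone functions (Sawyer, Stepanov, Sinnamon) gives, with constants depending only on $p$,
\begin{align*}
S(g)\approx\bigg(\int^1_0\bigg(\frac{\int^t_0 g^*(s)\,ds}{\int^t_0 v(s)^p\,ds}\bigg)^{p'}v(t)^p\,dt\bigg)^{1/p'},
\end{align*}
the endpoint contribution of the duality formula being absorbed thanks to (i). The heart of the proof is now to collapse this ``averaged'' expression to $\|wg^*\|_{L^{p'}}$ by means of (iii). Writing $V(t)=1+\int^1_t s^{-p}v(s)^p\,ds$ one has $V'(t)=-t^{-p}v(t)^p$, $V(1)=1$ and, by (ii), $V(0^+)=\infty$; consequently $w(t)^{p'}=\frac{d}{dt}[V(t)^{1-p'}]=(p'-1)\,t^{-p}v(t)^pV(t)^{-p'}$ and $\int^1_0 w(t)^{p'}\,dt=1$. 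Hypothesis (iii) reads exactly $\int^t_0 v^p\lesssim t^pV(t)$; combined with the trivial bound $\int^t_0 g^*\ge t\,g^*(t)$ it shows that the integrand above dominates a multiple of $t^{-p}v(t)^pV(t)^{-p'}g^*(t)^{p'}=w(t)^{p'}g^*(t)^{p'}/(p'-1)$, so that $S(g)\gtrsim\|wg^*\|_{L^{p'}}$. For the reverse estimate $S(g)\lesssim\|wg^*\|_{L^{p'}}$ one splits $(0,1)$ along a mesh on which $V$ doubles (this is possible since $V(0^+)=\infty$ and $V(1)=1$) and uses (iii) on each block to replace the average $t^{-p}\int^t_0 v^p$ by $V(t)$ there, summing the resulting geometric series; (i) provides the needed finiteness. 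Together with the first displayed inequality this gives $\|g\|_{X'(0,1)}\lesssim\|wg^*\|_{L^{p'}(0,1)}$.

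It remains to prove $\|g\|_{X'(0,1)}\gtrsim\|wg^*\|_{L^{p'}(0,1)}$, and here the passage through $S(g)$ cannot simply be reversed, since the near-maximisers of $S(g)$ need not be of the form $f^{**}$; instead one tests the associate norm directly. Taking $f=\chi_{(0,r)}$, one computes $f^{**}(t)=\min(1,r/t)$ and, using (iii), $\|vf^{**}\|_{L^p(0,1)}\lesssim r\,V(r)^{1/p}$, which already gives $\|g\|_{X'(0,1)}\gtrsim g^{**}(r)\,V(r)^{-1/p}$ for every $r\in(0,1)$. More generally, testing against superpositions $f=\sum_k c_k\chi_{(0,r_k)}$ (which are again nonincreasing), whose $f^{**}$ is explicit and whose $X$-norm is controlled by $\big(\sum_k c_k^p r_k^p V(r_k)\big)^{1/p}$ — again by (iii), via the almost-disjointness of the pieces on a geometric mesh — and choosing the $r_k$ dyadically with respect to the level sets of $V$ and the $c_k$ accordingly, one recovers $\|wg^*\|_{L^{p'}(0,1)}$ after optimising in the $c_k$. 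I expect the main obstacle to be precisely this interplay with (iii): it is the hypothesis that makes the averaged weight $t\mapsto t^{-p}\int^t_0 v^p$ comparable to $V(t)$ along such meshes, and hence the one that lets the Hardy/average formula for $S(g)$ collapse to the clean weighted $L^{p'}$ expression with weight $w$; the remaining care goes into the bookkeeping of the endpoint terms at $0$ and $1$, where (i) and (ii) are used.
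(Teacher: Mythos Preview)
The paper does not prove this theorem at all: it is quoted verbatim from \cite{Edmunds-Kerman-Pick} and used only as a black box to identify the optimal mixed-norm range in the limiting case $p=n$ (Corollary~\ref{corollary: W1Ln->R(X,Loo) rango}). There is therefore no proof in the paper to compare your proposal against.

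That said, your outline is the standard route taken in \cite{Edmunds-Kerman-Pick}: pass to the supremum over decreasing functions, bound above by $S(g)$ via $f\le f^{**}$, invoke the Sawyer--Stepanov duality principle on the cone of monotone functions in $L^{p}(v^{p}\,dt)$, and then use hypothesis (iii) to replace the averaged weight $t^{-p}\int_{0}^{t}v^{p}$ by $V(t)=1+\int_{t}^{1}s^{-p}v(s)^{p}\,ds$, which is exactly how the weight $w$ emerges. Your identification of (iii) as the pivot and of (i), (ii) as handling the endpoints is correct. The parts you leave at the level of a sketch --- the dyadic decomposition along level sets of $V$ for the upper bound $S(g)\lesssim\|wg^{*}\|_{L^{p'}}$, and the discretised testing with superpositions of characteristic functions for the lower bound --- are the genuinely technical steps in \cite{Edmunds-Kerman-Pick}; they go through as you indicate, but would need to be written out in full for a self-contained proof.
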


\begin{proof}[Proof of Corollary~\ref{corollary: W1Ln->R(X,Loo) rango}] By Theorem~\ref{teo: W1Z->R(X,Loo) rango}, with $Z(I^{n})=L^{n}(I^{n}),$ we get
\begin{align*}
\|f\|_{X^{\prime}_{W^{1}L^{n},L^{\infty}}(I^{n-1})}&=\|f^{**}(t^{1/n^{\prime}})\|_{L^{n^{\prime}}(I^{n})}.\end{align*} 
Consequently, Theorem~\ref{theorem: associate norm} implies that
\begin{align*}
\|f\|_{X_{W^{1}L^{n},L^{\infty}}(I^{n-1})}&=\big\|t^{-1}\log\big(e/t\big)f^{*}(s)ds\big\|_{L^{n}(I^{n-1})},\end{align*} 
 from which it follows that 
\begin{align*}
\mathcal{R}(X_{W^{1}L^{n},L^{\infty}}L^{\infty})=\mathcal{R}(L^{\infty,n;-1},L^{\infty}),
\end{align*}
as we wanted to prove.
\end{proof}

\subsection{Characterization of the optimal domain} We    now focus on the problem of determining  the largest r.i.\ domain space  satisfying  \eqref{eq: intro W1Z->R(X,Loo)} for a fixed range space $\mathcal{R}(X,L^{\infty}).$
Observe that the equivalences proved in  Theorem~\ref{theorem: reduccion medida finita}  suggest that  in order to solve this problem, we should find  the largest r.i.\ space $Z(I^{n})$  such that 
\begin{align*}
H: Z(I^{n})\rightarrow \overline{X}(0,1)
\end{align*} 
is bounded, where $H$ is the Hardy type operator defined in \eqref{eq: Hardy type operator}. Hence, it is natural to consider a new space, denoted by $Z_{\mathcal{R}(X,L^{\infty})}(I^{n})$, consisting of all $f\in\mathcal{M}(I^{n})$ for which
\begin{align}\label{eq: norma W1Z->R(X,Loo) dominio}
\|f\|_{Z_{\mathcal{R}(X,L^{\infty})}}= \bigg\|\int^{1}_{t^{n^{\prime}}}f^{**}(s)s^{-1/n^{\prime}}ds\bigg\|_{\overline{X}(0,1)}<\infty.
\end{align}
 It is not difficult to verify that  $Z_{\mathcal{R}(X,L^{\infty})}(I^{n})$ is an r.i.\ space equipped with the norm $\|\cdot\|_{Z_{\mathcal{R}(X,L^{\infty})}(I^{n})}.$   The next  lemma will be needed later. Its  proof follows the same arguments used in \cite[Theorem~4.4]{Edmunds-Kerman-Pick}, with small modifications, and hence we will omit it.

\begin{lemma}\label{lema: norma dominio equivalente}  Let $X(I^{n-1})$ be an r.i space, with $\overline{\alpha}_{X}<1.$  Then,
 \begin{align*}
 \|f\|_{Z_{\mathcal{R}(X,L^{\infty})}(I^{n})}\approx \bigg\|\int^{1}_{t^{n^{\prime}}}f^{*}(s)s^{-1/n^{\prime}}ds\bigg\|_{\overline{X}(0,1)},\ \ f\in \mathcal{M}(I^{n}).
 \end{align*}
 \end{lemma}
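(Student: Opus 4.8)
The statement asserts that, under the Boyd index condition $\overline{\alpha}_X < 1$, replacing $f^{**}$ by $f^*$ inside the operator $H$ produces an equivalent norm on $Z_{\mathcal{R}(X,L^\infty)}(I^n)$. One direction is trivial: since $f^*(s) \le f^{**}(s)$ pointwise, monotonicity of the r.i.\ norm $\overline{X}(0,1)$ together with the monotonicity of the inner integral immediately gives
\[
\bigg\|\int_{t^{n'}}^1 f^*(s)s^{-1/n'}\,ds\bigg\|_{\overline{X}(0,1)} \le \bigg\|\int_{t^{n'}}^1 f^{**}(s)s^{-1/n'}\,ds\bigg\|_{\overline{X}(0,1)} = \|f\|_{Z_{\mathcal{R}(X,L^\infty)}(I^n)}.
\]
So the whole content is the reverse inequality, and this is where the hypothesis $\overline{\alpha}_X<1$ must enter.

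First I would split $f^{**}(s) = s^{-1}\int_0^s f^*(y)\,dy$ and swap the order of integration (Fubini) in the double integral $\int_{t^{n'}}^1 s^{-1/n'-1}\big(\int_0^s f^*(y)\,dy\big)\,ds$. Writing the $y$-integral as $\int_0^{t^{n'}} + \int_{t^{n'}}^1$ splits the inner integral into two pieces. The contribution of $\int_{t^{n'}}^1 f^*(y)\big(\int_y^1 s^{-1/n'-1}\,ds\big)\,dy$ is, since $\int_y^1 s^{-1/n'-1}\,ds \approx y^{-1/n'}$ for $0<y<1$ (as $n'>1$), comparable to $\int_{t^{n'}}^1 f^*(y) y^{-1/n'}\,dy$, which is exactly the target quantity; taking $\overline{X}$-norms handles this term with an absolute constant, no Boyd index needed. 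The remaining term is
\[
\int_0^{t^{n'}} f^*(y)\,dy \cdot \int_{t^{n'}}^1 s^{-1/n'-1}\,ds \approx t^{-1} \int_0^{t^{n'}} f^*(y)\,dy = f^{**}(t^{n'}),
\]
so after the change of variable $t \mapsto t^{n'}$ and using boundedness of the dilation operator on $\overline{X}(0,1)$, it remains to show $\|f^{**}(t)\|_{\overline{X}(0,1)} \lesssim \big\|\int_t^1 f^*(s)s^{-1}\,ds\big\|_{\overline{X}(0,1)}$ (up to the harmless dilation), i.e.\ to dominate the Hardy averaging operator $f \mapsto f^{**}$ by the "dual" Hardy operator $f \mapsto \int_t^1 f^*(s)\,s^{-1}\,ds$ in $\overline{X}$-norm. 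This is precisely a classical consequence of $\overline{\alpha}_X<1$: the condition $\overline{\alpha}_X<1$ is equivalent to boundedness of $P\colon f^* \mapsto f^{**}$ on $\overline{X}(0,1)$ (Boyd's theorem, cf.\ \cite[Theorem~III.5.15]{Bennett}), and combining this with the elementary pointwise estimate $f^{**}(t) \lesssim \int_t^1 f^{**}(s)\,s^{-1}\,ds + f^{**}(1)$ (or rather the reiterated inequality $f^{**}(t)\le \int_t^{2t} f^{**}(s)\frac{ds}{s}\cdot(\log 2)^{-1} \cdot \dots$) together with $f^{**}(1)\lesssim\|f\|_{\overline X}$ from axiom (A5) closes the loop. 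Equivalently, one invokes directly the known fact that for $\overline{\alpha}_X<1$ the operators $f^{**}$ and $\int_t^1 f^*(s)s^{-1}ds$ have comparable $\overline X$-norms — this is exactly the computation carried out in \cite[Theorem~4.4]{Edmunds-Kerman-Pick}, which is why the authors cite it.

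The main obstacle, then, is nothing conceptually new but rather the careful bookkeeping: after Fubini one must verify that each resulting single integral is genuinely controlled by the target quantity $\big\|\int_{t^{n'}}^1 f^*(s)s^{-1/n'}\,ds\big\|_{\overline{X}(0,1)}$ and not by something larger, and the only non-elementary ingredient — controlling $f^{**}$ in $\overline X$-norm by the tail-average $\int_t^1 f^*(s)\,s^{-1}\,ds$ — is precisely where $\overline{\alpha}_X<1$ is used, via Boyd's characterization of the boundedness of the Hardy operator on r.i.\ spaces. Since this is structurally identical to \cite[Theorem~4.4]{Edmunds-Kerman-Pick} after the substitution $t\leftrightarrow t^{n'}$ and a dilation, the honest thing is to note the small modifications (the exponent $1/n'$ and the outer power $n'$, both absorbed by dilation boundedness) and refer to that proof, which is what the statement does.
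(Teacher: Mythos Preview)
Your overall strategy --- trivial direction via $f^*\le f^{**}$, then Fubini to split the hard direction into a harmless piece plus a remainder where the Boyd condition enters --- is correct and is indeed how the argument in \cite[Theorem~4.4]{Edmunds-Kerman-Pick} proceeds. But there are two real errors in the key step. First, your identification of the remaining term is off by a crucial power of $t$: one has $t^{-1}\int_0^{t^{n'}}f^*(y)\,dy = t^{n'-1}f^{**}(t^{n'})$, not $f^{**}(t^{n'})$. This is not cosmetic, because the inequality you would then need, $\|f^{**}(t^{n'})\|_{\overline X}\lesssim \|Hf^*\|_{\overline X}$ (with $Hf^*(t)=\int_{t^{n'}}^1 f^*(s)s^{-1/n'}\,ds$), is in general \emph{false} even under $\overline{\alpha}_X<1$: for $X=L^2(I)$, $n=2$, $f^*=\chi_{(0,\epsilon)}$, one computes $\|f^{**}(t^2)\|_{L^2}\approx\epsilon^{1/4}$ but $\|Hf^*\|_{L^2}\approx\epsilon^{3/4}$. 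Second, the substitution $t\mapsto t^{n'}$ is not a dilation and is not absorbed by boundedness of $E_t$ on $\overline X$; r.i.\ norms do not behave well under power changes of the outer variable, so your reduction to $\|f^{**}(t)\|_{\overline X}\lesssim\|\int_t^1 f^*(s)s^{-1}ds\|_{\overline X}$ is unjustified as written.

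The fix is to keep the outer variable $t$ and change variables only \emph{inside} the integral: with $y=u^{n'}$ one obtains $t^{n'-1}f^{**}(t^{n'}) = n'\,Ph(t)$, where $P$ is the Hardy averaging operator $Pg(t)=t^{-1}\int_0^t g$ and $h(u):=u^{n'-1}f^*(u^{n'})$. Now $\overline{\alpha}_X<1$ gives, via Boyd's theorem, $\|Ph\|_{\overline X}\lesssim\|h\|_{\overline X}$; and the pointwise bound $h(t)\le c_n\,Hf^*(2^{-1/n'}t)$ (valid for all $t\in(0,1)$), combined with an actual dilation, yields $\|h\|_{\overline X}\lesssim\|Hf^*\|_{\overline X}$. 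This closes the argument. In short, the ``small modification'' of \cite[Theorem~4.4]{Edmunds-Kerman-Pick} is exactly the bookkeeping of the extra factor $t^{n'-1}$, not a power change of the outer variable.
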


\begin{theorem}\label{teo: W1Z->R(X,Loo) dominio} Let $X(I^{n-1})$ be an r.i space, $\overline{\alpha}_{X}<1,$ and let  $Z_{\mathcal{R}(X, L^{\infty})}(I^{n})$ be the r.i.\ space defined in \eqref{eq: norma W1Z->R(X,Loo) dominio}. Then, the Sobolev embedding
 \begin{align}\label{eq: 1W1Z->R(X,Loo) dominio}
 W^{1}Z_{\mathcal{R}(X,L^{\infty})}(I^{n})\hookrightarrow \mathcal{R}(X,L^{\infty})
 \end{align}
 holds. Moreover, $Z_{\mathcal{R}(X,L^{\infty})}(I^{n})$ is the largest domain space for which \eqref{eq: 1W1Z->R(X,Loo) dominio} holds.
 \end{theorem}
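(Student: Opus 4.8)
The strategy is to exploit Theorem~\ref{theorem: reduccion medida finita}, which reduces the Sobolev embedding \eqref{eq: intro W1Z->R(X,Loo)} to the boundedness of the Hardy type operator, together with Lemma~\ref{lema: norma dominio equivalente}, which tells us that since $\overline{\alpha}_{X}<1$ the norm of $Z_{\mathcal{R}(X,L^{\infty})}(I^{n})$ can be computed using $f^{*}$ rather than $f^{**}$. The argument splits into two parts: first the embedding \eqref{eq: 1W1Z->R(X,Loo) dominio} itself, and then the optimality (maximality) of the domain space.

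For the embedding, I would take any $f\in W^{1}Z_{\mathcal{R}(X,L^{\infty})}(I^{n})$ and verify condition $(ii)$ of Theorem~\ref{theorem: reduccion medida finita}, namely
\begin{align*}
\bigg\|\int^{1}_{t^{n^{\prime}}}f^{*}(s)s^{-1/n^{\prime}}ds\bigg\|_{\overline{X}(0,1)}\lesssim \|f^{*}\|_{\overline{Z}_{\mathcal{R}(X,L^{\infty})}(0,1)}.
\end{align*}
But by the very definition \eqref{eq: norma W1Z->R(X,Loo) dominio} and Lemma~\ref{lema: norma dominio equivalente}, the left-hand side is (equivalent to) $\|f\|_{Z_{\mathcal{R}(X,L^{\infty})}(I^{n})}$, so the inequality holds by construction — indeed it holds with the $W^{1}$-norm on the right, since $\|f\|_{Z_{\mathcal{R}(X,L^{\infty})}}\le\|f\|_{W^{1}Z_{\mathcal{R}(X,L^{\infty})}}$. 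Then Theorem~\ref{theorem: reduccion medida finita} gives \eqref{eq: 1W1Z->R(X,Loo) dominio} directly.

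For the optimality, suppose $Z(I^{n})$ is any r.i.\ space for which $W^{1}Z(I^{n})\hookrightarrow\mathcal{R}(X,L^{\infty})$ holds. I need to show $Z(I^{n})\hookrightarrow Z_{\mathcal{R}(X,L^{\infty})}(I^{n})$. By Theorem~\ref{theorem: reduccion medida finita} applied to this $Z$, condition $(ii)$ holds, i.e.\ for every $f\in Z(I^{n})$,
\begin{align*}
\bigg\|\int^{1}_{t^{n^{\prime}}}f^{*}(s)s^{-1/n^{\prime}}ds\bigg\|_{\overline{X}(0,1)}\lesssim \|f^{*}\|_{\overline{Z}(0,1)}=\|f\|_{Z(I^{n})}.
\end{align*}
By Lemma~\ref{lema: norma dominio equivalente} the left-hand side is equivalent to $\|f\|_{Z_{\mathcal{R}(X,L^{\infty})}(I^{n})}$, so $\|f\|_{Z_{\mathcal{R}(X,L^{\infty})}(I^{n})}\lesssim\|f\|_{Z(I^{n})}$, which is precisely $Z(I^{n})\hookrightarrow Z_{\mathcal{R}(X,L^{\infty})}(I^{n})$. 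This establishes that $Z_{\mathcal{R}(X,L^{\infty})}(I^{n})$ is the largest such domain.

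**Main obstacle.** The delicate point is the use of $f^{*}$ versus $f^{**}$ in the Hardy operator: condition $(ii)$ of Theorem~\ref{theorem: reduccion medida finita} is stated with $f^{*}$ under the inner integral, whereas the domain space $Z_{\mathcal{R}(X,L^{\infty})}$ is defined in \eqref{eq: norma W1Z->R(X,Loo) dominio} with $f^{**}$. Bridging these two requires exactly the hypothesis $\overline{\alpha}_{X}<1$ through Lemma~\ref{lema: norma dominio equivalente}; without it the two quantities need not be comparable, and the theorem would fail. Everything else is a formal consequence of the reduction theorem, so the only real content is ensuring the equivalence of norms provided by that lemma is invoked correctly in both directions. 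One should also briefly note that $\|f\|_{W^{1}Z_{\mathcal{R}(X,L^{\infty})}}\approx\||D^{1}f|^{*}\|_{\overline{Z}_{\mathcal{R}(X,L^{\infty})}(0,1)}$, consistent with how condition $(ii)$ was applied in the converse direction of Theorem~\ref{theorem: reduccion medida finita}, so that the Sobolev norm (involving the full gradient) rather than just the $Z$-norm appears on the right where needed.
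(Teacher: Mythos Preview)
Your proposal is correct and follows essentially the same route as the paper's proof: both parts rest on Theorem~\ref{theorem: reduccion medida finita}, and the optimality argument is identical to the paper's. One small simplification worth noting: for the embedding \eqref{eq: 1W1Z->R(X,Loo) dominio} itself you do not need Lemma~\ref{lema: norma dominio equivalente}, since the pointwise inequality $f^{*}\le f^{**}$ already gives
\[
\bigg\|\int^{1}_{t^{n'}}f^{*}(s)s^{-1/n'}\,ds\bigg\|_{\overline{X}(0,1)}\le \bigg\|\int^{1}_{t^{n'}}f^{**}(s)s^{-1/n'}\,ds\bigg\|_{\overline{X}(0,1)}=\|f\|_{Z_{\mathcal{R}(X,L^{\infty})}(I^{n})},
\]
so condition $(ii)$ holds by definition; the hypothesis $\overline{\alpha}_{X}<1$ (and hence Lemma~\ref{lema: norma dominio equivalente}) is really only needed for the optimality step, exactly as you use it there. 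Also, condition $(ii)$ is a statement about all $f\in Z(I^{n})$, not about Sobolev functions, so there is no need to take $f\in W^{1}Z_{\mathcal{R}(X,L^{\infty})}(I^{n})$ or to track the gradient when verifying it --- the passage to the Sobolev embedding is entirely handled by the equivalence $(i)\Leftrightarrow(ii)$ in Theorem~\ref{theorem: reduccion medida finita}.
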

 
 \begin{proof} 
  Theorem~\ref{theorem: reduccion medida finita}  ensures us that \eqref{eq: 1W1Z->R(X,Loo) dominio} holds. Hence, to complete the proof,  it only remains to show that   $Z_{\mathcal{R}(X,L^{\infty})}(I^{n})$ is the largest r.i.\ space satisfying \eqref{eq: 1W1Z->R(X,Loo) dominio}.  We shall see that if another space, namely $Z(I^{n}),$ verifies 
 \begin{align*}
 W^{1}Z(I^{n})\hookrightarrow \mathcal{R}(X,L^{\infty}),
 \end{align*} 
 then 
 \begin{align*}
 Z(I^{n})\hookrightarrow Z_{\mathcal{R}(X,L^{\infty})}(I^{n}).
 \end{align*} 
 We fix any $f\in Z(I^{n}).$ Then, using again   Theorem~\ref{theorem: reduccion medida finita} , we get
 \begin{align}\label{eq: 2W1Z->R(X,Loo) dominio}
\bigg\|\int^{1}_{t^{n^{\prime}}}f^{*}(s)s^{-1/n^{\prime}}ds\bigg\|_{\overline{X}(0,1)}\lesssim \|f^{*}\|_{\overline{Z}(0,1)}.
 \end{align}
  But, by Lemma~\ref{lema: norma dominio equivalente}, we have
 \begin{align*} 
 \|f\|_{Z_{\mathcal{R}(X,L^{\infty})}(I^{n})}\approx\bigg\|\int^{1}_{t^{n^{\prime}}}s^{-1/n^{\prime}}f^{*}(s)ds\bigg\|_{\overline{X}(0,1)},
 \end{align*}
 and hence, using \eqref{eq: 2W1Z->R(X,Loo) dominio}, the result follows.
  \end{proof}
 
Now, we shall present some applications   of Theorem~\ref{teo: W1Z->R(X,Loo) dominio}. 

\begin{corollary} Let $1<p<n.$ Then, the Lebesgue space $L^{p}(I^{n})$ is the largest r.i.\ space satisfying
\begin{align*}
W^{1}L^{p}(I^{n})\hookrightarrow \mathcal{R}(L^{p(n-1)/(n-p),p},L^{\infty}).
\end{align*}
\end{corollary}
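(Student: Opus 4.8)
The plan is to reduce the statement, via Theorem~\ref{teo: W1Z->R(X,Loo) dominio}, to the identification of the optimal domain space $Z_{\mathcal{R}(X,L^{\infty})}(I^{n})$ associated to $X(I^{n-1})=L^{p(n-1)/(n-p),p}(I^{n-1})$. First I would check that the hypothesis $\overline{\alpha}_{X}<1$ of that theorem is satisfied. Writing $q=p(n-1)/(n-p)$, one has $1<q<\infty$ (since $1<p<n$) and $q\ge p$, so $X$ is a genuine r.i.\ space; moreover $\overline{\alpha}_{X}=1/q=(n-p)/(p(n-1))$, which is $<1$ precisely because $p>1$. Hence Theorem~\ref{teo: W1Z->R(X,Loo) dominio} applies, and it remains only to prove that $Z_{\mathcal{R}(X,L^{\infty})}(I^{n})=L^{p}(I^{n})$ with equivalent norms.

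To evaluate the norm \eqref{eq: norma W1Z->R(X,Loo) dominio}, I would first use Lemma~\ref{lema: norma dominio equivalente} to replace $f^{**}$ by $f^{*}$, and then note that $g(t):=\int_{t^{n'}}^{1}f^{*}(s)s^{-1/n'}\,ds$ is nonnegative and nonincreasing on $(0,1)$, so $g^{*}=g$ and
\[
\|f\|_{Z_{\mathcal{R}(X,L^{\infty})}(I^{n})}\approx\Bigl\|t^{1/q-1/p}g(t)\Bigr\|_{L^{p}(0,1)}=\Bigl(\int_{0}^{1}t^{p/q-1}\Bigl(\int_{t^{n'}}^{1}f^{*}(s)s^{-1/n'}\,ds\Bigr)^{p}dt\Bigr)^{1/p}.
\]
The substitution $t=u^{1/n'}$ converts the exponent $p/q-1$ into $-p/n$ (one uses $p/(qn')=(n-p)/n$ together with $1/n+1/n'=1$), so that $\|f\|_{Z_{\mathcal{R}(X,L^{\infty})}(I^{n})}^{p}\approx\int_{0}^{1}u^{-p/n}\bigl(\int_{u}^{1}f^{*}(s)s^{-1/n'}\,ds\bigr)^{p}du$.

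For the upper bound I would apply the Hardy inequality \cite[Lemma~III.3.9]{Bennett} to the operator $h\mapsto\int_{u}^{1}h(s)\,ds$ with weight $u^{-p/n}=u^{r-1}$, $r=(n-p)/n>0$ (this is exactly where $p<n$ is used), and $h(u)=f^{*}(u)u^{-1/n'}$; the resulting weight on the right-hand side collapses to $1$ thanks to $1/n+1/n'=1$, giving $\int_{0}^{1}u^{-p/n}\bigl(\int_{u}^{1}f^{*}s^{-1/n'}\,ds\bigr)^{p}du\lesssim\int_{0}^{1}f^{*}(u)^{p}\,du=\|f\|_{L^{p}(I^{n})}^{p}$. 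For the reverse bound I would use the monotonicity of $f^{*}$: for $0<u<1/2$, $\int_{u}^{1}f^{*}(s)s^{-1/n'}\,ds\ge f^{*}(2u)\int_{u}^{2u}s^{-1/n'}\,ds\gtrsim f^{*}(2u)\,u^{1/n}$, hence $\int_{0}^{1}u^{-p/n}\bigl(\int_{u}^{1}f^{*}s^{-1/n'}\,ds\bigr)^{p}du\gtrsim\int_{0}^{1/2}f^{*}(2u)^{p}\,du\approx\|f\|_{L^{p}(I^{n})}^{p}$. Combining the two estimates yields $Z_{\mathcal{R}(X,L^{\infty})}(I^{n})=L^{p}(I^{n})$, and Theorem~\ref{teo: W1Z->R(X,Loo) dominio} then gives both the embedding and its optimality. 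The only genuinely delicate points are the bookkeeping of exponents in the change of variables and checking that the Hardy exponent $r=(n-p)/n$ is positive; once these are settled the two-sided estimate is routine, the lower bound resting on monotonicity rather than on Hardy's inequality.
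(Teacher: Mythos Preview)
Your proof is correct and follows essentially the same strategy as the paper's: reduce via Theorem~\ref{teo: W1Z->R(X,Loo) dominio} and Lemma~\ref{lema: norma dominio equivalente} to the two-sided estimate $\int_{0}^{1} u^{-p/n}\bigl(\int_{u}^{1} f^{*}(s)s^{-1/n'}\,ds\bigr)^{p}\,du \approx \|f\|_{L^{p}}^{p}$, and obtain the upper bound from Hardy's inequality. Your lower bound, obtained directly from the monotonicity of $f^{*}$, is slightly more elementary than the paper's, which instead applies a second Hardy inequality and then uses the pointwise estimate $\int_{0}^{t}\int_{v}^{1} f^{*}(s)s^{-1/n'}\,ds\,dv \gtrsim f^{*}(t)\,t^{1+1/n}$.
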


\begin{proof} 
If $X(I^{n-1})=L^{p(n-1)/(n-p),p}(I^{n-1})$ then, by  Theorem~\ref{teo: W1Z->R(X,Loo) dominio}, we obtain that
\begin{align*}
\|f\|_{Z_{\mathcal{R}(L^{p(n-1)/(n-p),p},L^{\infty})}(I^{n})}&\approx\bigg\|\int^{1}_{t^{n^{\prime}}}f^{*}(s)s^{-1/n^{\prime}}ds\bigg\|_{L^{p(n-1)/(n-p),p}(0,1)}\\
&\approx \bigg(\int^{1}_{0}t^{(n-p)/n-1}\bigg(\int^{1}_{t}f^{*}(s)s^{-1/n^{\prime}}ds\bigg)^{p}dt\bigg)^{1/p}.
\end{align*}
Using now Hardy's inequalities  we obtain
\begin{align}\label{eq: 1WLp->R(X,Loo) 1<p<n}
\|f\|_{Z_{\mathcal{R}(L^{p(n-1)/(n-p),p},L^{\infty})}(I^{n})}\lesssim \|f\|_{L^{p}(I^{n})}.
\end{align}
On the other hand, again Hardy's inequalities give us that 
\begin{align*}
\|f\|_{Z_{\mathcal{R}(L^{p(n-1)/(n-p),p},L^{\infty})}(I^{n})}&\gtrsim \bigg(\int^{1}_{0}t^{-(n(p-1)+p)/n-1}\bigg(\int^{t}_{0}\int^{1}_{v}f^{*}(s)s^{-1/n^{\prime}}ds\,dv\bigg)^{p}dt\bigg)^{1/p}\\
&=\bigg(\int^{1}_{0}t^{-p(n-1)/n}\bigg(\int^{t}_{0}\int^{1}_{v}f^{*}(s)s^{-1/n^{\prime}}ds\,dv\bigg)^{p}dt\bigg)^{1/p}.
\end{align*}
But, if $0<t<1$, then
\begin{align*}
\int^{t}_{0}\int^{1}_{v}f^{*}(s)s^{-1/n^{\prime}}ds\,dv&=t\int^{1}_{t}f^{*}(s)s^{-1/n^{\prime}}ds+\int^{t}_{0}f^{*}(v)v^{1/n}dv\gtrsim f^{*}(t)t^{1/n+1}.
\end{align*}
So,
\begin{align}\label{eq: 2WLp->R(X,Loo) 1<p<n}
\|f\|_{Z_{\mathcal{R}(L^{p(n-1)/(n-mp),p},L^{\infty})}(I^{n})}\gtrsim \|f\|_{L^{p}(I^{n})}.
\end{align}
Thus, combining \eqref{eq: 1WLp->R(X,Loo) 1<p<n} and \eqref{eq: 2WLp->R(X,Loo) 1<p<n}, we obtain 
\begin{align*}
L^{p}(I^{n})=Z_{\mathcal{R}(L^{p(n-1)/(n-p),p},L^{\infty})}(I^{n}),
\end{align*} 
as we wanted to show.
\end{proof}

Finally, we shall see that a nontrivial improvement of the domain in \eqref{eq: W1Ln->R(X,Loo) rango} is possible among r.i.\ spaces. For this, we will use the following result~\cite{Bennett-Rudnick}:

\begin{lemma}\label{lema: indice Boyd Lorentz-Zygmund} If $1<p< \infty,$ then $\overline{\alpha}_{L^{\infty,p;-1}}=0.$
\end{lemma}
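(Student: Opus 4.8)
The plan is to bound the norm $h_{L^{\infty,p;-1}}(t)$ of the dilation operator $E_{t}$ on the Luxemburg representation $\overline{L^{\infty,p;-1}}(0,1)$ and to read off the upper Boyd index from that bound, since by definition $\overline{\alpha}_{L^{\infty,p;-1}}=\inf_{1<t<\infty}\log h_{L^{\infty,p;-1}}(t)/\log t$, and recall that $\|g\|_{\overline{L^{\infty,p;-1}}(0,1)}=\bigl\|s^{-1/p}[1+\log(1/s)]^{-1}g^{*}(s)\bigr\|_{L^{p}(0,1)}$. First I would reduce to monotone functions: for $t\ge1$ one checks, by comparing distribution functions, that $(E_{t}f)^{*}\le E_{t}(f^{*})$ pointwise, so it suffices to estimate $\|E_{t}f\|$ for non-increasing $f$; for such $f$ the function $E_{t}f$ is itself non-increasing on $(0,1)$, whence $\|E_{t}f\|^{p}=\int_{0}^{1}s^{-1}[1+\log(1/s)]^{-p}f(s/t)^{p}\,ds$.

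The core step is a one-line estimate. Fixing $t>1$ and non-increasing $f$ and substituting $u=s/t$ gives
\begin{align*}
\|E_{t}f\|^{p}=\int_{0}^{1/t}\frac{f(u)^{p}}{u\,\bigl[1+\log(1/u)-\log t\bigr]^{p}}\,du.
\end{align*}
On $(0,1/t)$ one has $1+\log(1/u)-\log t\ge1$, hence
\begin{align*}
\frac{1+\log(1/u)}{1+\log(1/u)-\log t}=1+\frac{\log t}{1+\log(1/u)-\log t}\le 1+\log t,
\end{align*}
and substituting this bound back in yields $\|E_{t}f\|^{p}\le(1+\log t)^{p}\int_{0}^{1/t}u^{-1}[1+\log(1/u)]^{-p}f(u)^{p}\,du\le(1+\log t)^{p}\|f\|^{p}$. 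Thus $h_{L^{\infty,p;-1}}(t)\le 1+\log t$ for every $t>1$. (Finiteness of $p$ is used both to raise the pointwise inequality to the power $p$ and, implicitly, for $L^{\infty,p;-1}$ to be a genuine r.i.\ space, since for $p=1$ the defining integral for $\varphi_{L^{\infty,p;-1}}$ diverges.)

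To conclude, since $E_{t}\chi_{(0,1)}=\chi_{(0,1)}$ for $t\ge1$ we get $h_{L^{\infty,p;-1}}(t)\ge1$, so $\overline{\alpha}_{L^{\infty,p;-1}}\ge0$; on the other hand the core step gives, for every fixed $T>1$,
\begin{align*}
0\le \overline{\alpha}_{L^{\infty,p;-1}}\le \frac{\log h_{L^{\infty,p;-1}}(T)}{\log T}\le\frac{\log(1+\log T)}{\log T},
\end{align*}
and letting $T\to\infty$ forces $\overline{\alpha}_{L^{\infty,p;-1}}=0$. I do not expect a genuine obstacle here; the only point needing a little care is the reduction to monotone functions, which is what turns the abstract r.i.\ norm into the explicit weighted $L^{p}$ norm on which $E_{t}$ acts by an honest dilation. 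Alternatively, the statement can simply be quoted from the tables of dilation norms and Boyd indices for Lorentz--Zygmund spaces in Bennett and Rudnick~\cite{Bennett-Rudnick}.
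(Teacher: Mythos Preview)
Your proof is correct. The paper itself gives no proof of this lemma: it is stated with attribution to Bennett and Rudnick~\cite{Bennett-Rudnick}, exactly the alternative you mention in your final sentence. Your direct computation of $h_{L^{\infty,p;-1}}(t)\le 1+\log t$ via the substitution $u=s/t$ and the elementary bound on $(1+\log(1/u))/(1+\log(1/u)-\log t)$ is clean and self-contained, so you are in fact supplying an argument the paper does not. The reduction to non-increasing $f$ is handled correctly by $(E_tf)^*\le E_t(f^*)$, and the lower bound $h_{L^{\infty,p;-1}}(t)\ge1$ from $E_t\chi_{(0,1)}=\chi_{(0,1)}$ closes the argument.
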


\begin{corollary}
 The  r.i.\ space $Z_{\mathcal{R}(L^{\infty,n;-1},L^{\infty})}(I^{n}),$ with norm given by
\begin{align*}
\|f\|_{Z_{\mathcal{R}(L^{\infty,n;-1},L^{\infty})}(I^{n})}\approx\bigg\|\int^{1}_{t}s^{-1/n^{\prime}}f^{*}(s)ds\bigg\|_{L^{\infty,n;-1}(I^{n})},
\end{align*}
is the largest r.i.\ domain space that verifies
\begin{align*}
W^{1}Z_{\mathcal{R}(L^{\infty,n;-1},L^{\infty})}(I^{n})\hookrightarrow \mathcal{R}(L^{\infty,n;-1},L^{\infty}).
\end{align*}
\end{corollary}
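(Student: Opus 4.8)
The plan is to recognise the statement as an instance of Theorem~\ref{teo: W1Z->R(X,Loo) dominio} for the particular choice $X(I^{n-1})=L^{\infty,n;-1}(I^{n-1})$. The only hypothesis of that theorem is $\overline{\alpha}_{X}<1$, and for this $X$ it is precisely the content of Lemma~\ref{lema: indice Boyd Lorentz-Zygmund}: since $n\geq 2$ we have $1<n<\infty$, whence $\overline{\alpha}_{L^{\infty,n;-1}}=0<1$. So the first step is to record this verification.

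Once $\overline{\alpha}_{X}<1$ is known, Theorem~\ref{teo: W1Z->R(X,Loo) dominio} applies verbatim and yields both halves of the claim simultaneously: the embedding $W^{1}Z_{\mathcal{R}(L^{\infty,n;-1},L^{\infty})}(I^{n})\hookrightarrow \mathcal{R}(L^{\infty,n;-1},L^{\infty})$ holds and $Z_{\mathcal{R}(L^{\infty,n;-1},L^{\infty})}(I^{n})$ is its largest r.i.\ domain. It then only remains to identify the norm of this space in the displayed form. For this I would apply Lemma~\ref{lema: norma dominio equivalente}, legitimate here because $\overline{\alpha}_{X}<1$, to replace $f^{**}$ by $f^{*}$ inside \eqref{eq: norma W1Z->R(X,Loo) dominio}, obtaining $\|f\|_{Z_{\mathcal{R}(L^{\infty,n;-1},L^{\infty})}(I^{n})}\approx\bigl\|\int_{t^{n^{\prime}}}^{1}s^{-1/n^{\prime}}f^{*}(s)\,ds\bigr\|_{L^{\infty,n;-1}(0,1)}$, and then elementary manipulations (a change of variables in the outer integral, the boundedness of the dilation operator on the r.i.\ space $\overline{L^{\infty,n;-1}}(0,1)$, and a Hardy-type estimate as in Lemma~\ref{lema: operador}) to bring the expression to the stated form.

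I do not anticipate a genuine obstacle: the whole content rests on Theorem~\ref{teo: W1Z->R(X,Loo) dominio} and the two auxiliary lemmas, and the only points that call for attention are the verification $\overline{\alpha}_{L^{\infty,n;-1}}<1$ (needed by both Lemma~\ref{lema: norma dominio equivalente} and the theorem) and the bookkeeping in the norm reduction. It would be natural to append a short remark making the announced improvement explicit, namely that $L^{n}(I^{n})\subsetneq Z_{\mathcal{R}(L^{\infty,n;-1},L^{\infty})}(I^{n})$ strictly: testing the norm on a function with $f^{*}(s)=s^{-1/n}$ shows that the logarithmic factor $[1+\log(1/t)]^{-1}$ in the target keeps its $Z_{\mathcal{R}(L^{\infty,n;-1},L^{\infty})}$-norm finite while its $L^{n}(I^{n})$-norm diverges, so the optimal domain obtained here genuinely enlarges the $W^{1}L^{n}$ occurring in \eqref{eq: W1Ln->R(X,Loo) rango}.
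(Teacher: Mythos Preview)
Your main argument is correct and coincides with the paper's own proof: invoke Lemma~\ref{lema: indice Boyd Lorentz-Zygmund} to obtain $\overline{\alpha}_{L^{\infty,n;-1}}=0<1$, apply Theorem~\ref{teo: W1Z->R(X,Loo) dominio} (which already subsumes Lemma~\ref{lema: norma dominio equivalente}) to get both the embedding and its optimality, and then reduce the lower limit $t^{n'}$ to $t$. For that last reduction the paper appeals to a change of variables together with \cite[Theorem 3.1]{Carro-Pick-Soria-Stepanov}; in fact the substitution $u=t^{n'}$ alone, combined with the elementary equivalence $1+\tfrac{1}{n'}\log(1/u)\approx 1+\log(1/u)$, already gives the two-sided bound, so your mention of the dilation operator and of Lemma~\ref{lema: operador} is superfluous (neither applies to the power change $t\mapsto t^{n'}$), but harmless.

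The suggested remark, however, contains a genuine error. With $f^{*}(s)=s^{-1/n}$ one has
\[
\int_{t}^{1}s^{-1/n'}f^{*}(s)\,ds=\int_{t}^{1}s^{-1}\,ds=\log(1/t),
\]
and therefore
\[
\Big\|\int_{t}^{1}s^{-1/n'}f^{*}(s)\,ds\Big\|_{L^{\infty,n;-1}}^{n}
=\int_{0}^{1}t^{-1}\big[1+\log(1/t)\big]^{-n}\big[\log(1/t)\big]^{n}\,dt=\infty,
\]
so this $f$ lies in neither $L^{n}$ nor $Z_{\mathcal{R}(L^{\infty,n;-1},L^{\infty})}$. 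More generally, for every test function of the form $f^{*}(s)=s^{-1/n}[1+\log(1/s)]^{-\gamma}$ the two norms are finite for exactly the same range of $\gamma$, so no simple power--log profile separates the spaces. The strict inclusion $L^{n}\subsetneq Z_{\mathcal{R}(L^{\infty,n;-1},L^{\infty})}$ does hold---comparing fundamental functions gives $\varphi_{Z}(r)\approx r^{1/n}[\log(e/r)]^{-(n-1)/n}\not\approx r^{1/n}=\varphi_{L^{n}}(r)$, and the open mapping theorem then forces the sets to differ---but an explicit witness requires a lacunary construction (for instance $f^{*}=\sum_{k}e^{2^{k}/n}\chi_{(e^{-2^{k+1}},\,e^{-2^{k}})}$), not the function you propose.
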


\begin{proof}According to Lemma~\ref{lema: indice Boyd Lorentz-Zygmund}, we may apply Theorem~\ref{teo: W1Z->R(X,Loo) dominio} to obtain
\begin{align*}
\|f\|_{Z_{\mathcal{R}(L^{\infty,n;-1},L^{\infty})}(I^{n})}&\approx\bigg\|\int^{1}_{t^{n^{\prime}}}s^{-1/n^{\prime}}f^{*}(s)ds\bigg\|_{L^{\infty,n;-1}(I^{n})}.
\end{align*} 
Then, the result follows using a change of variables  and  \cite[Theorem 3.1]{Carro-Pick-Soria-Stepanov}.
\end{proof}

\section{Comparison with the optimal r.i. space}\label{sec: Comparison with the optimal r.i. space}

As we have mentioned before, Kerman and Pick  \cite{Kerman-Pick} studied the optimal range problem for Sobolev embedding within the class of r.i.\ spaces. Namely, for a fixed r.i.\ domain space $Z(I^{n})$, they determined  the smallest r.i.\ space  $X^{\textnormal{op}}(I^{n}),$ satisfying 
	\begin{align}\label{eq: Lubos optimo}
W^{1}Z(I^{n})\hookrightarrow X^{\textnormal{op}}(I^{n}).
\end{align}

In our setting, we recall that in Theorem \ref{teo: W1Z->R(X,Loo) rango} we have studied an analogous problem in the context of   mixed norm spaces. More precisely,  we have found the smallest space of the form $\mathcal{R}(X, L^{\infty}),$ namely $\mathcal{R}(X_{W^{1}Z,L^{\infty}},L^{\infty}),$ that verifies 
 \begin{align}\label{eq: mixtas optimo}
 W^{1}Z(I^{n})\hookrightarrow \mathcal{R}(X_{W^{1}Z,L^{\infty}},L^{\infty}).
 \end{align}
 
Now, our goal is to compare the optimal r.i.\ range space  with the optimal mixed norm space. We will show in Theorem~\ref{theorem: R(X,Loo)->Lubos} that  the  following chain of embeddings holds:
 \begin{align*}
 W^{1}Z(I^{n})\hookrightarrow \mathcal{R}(X_{W^{1}Z,L^{\infty}},L^{\infty})\hookrightarrow  X^{\textnormal{op}}(I^{n}).
 \end{align*}

To this end, we first need to recall the following result~\cite{Kerman-Pick}.

\begin{theorem}\label{theorem: reduccion Kerman-Pick} Let $Y(I^{n})$ and $Z(I^{n})$ be  r.i.\ spaces. Then, the Sobolev embedding
\begin{align*}
W^{1}Z(I^{n})\hookrightarrow Y(I^{n})
\end{align*}
holds if and only if 
\begin{align*}
 \bigg\|\int^{1}_{t}t^{1/n-1}f(t)dt\bigg\|_{\overline{Y}(0,1)}\lesssim \|f\|_{\overline{Z}(0,1)},\ \ f\in \overline{Z}(0,1).
 \end{align*}
\end{theorem}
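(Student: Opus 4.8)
The plan is to prove the two implications separately, closely following the proof of Theorem~\ref{theorem: reduccion medida finita}. The only structural change is that the target is now an r.i.\ space on $I^n$ rather than a mixed norm space, so no dimension is lost and the Hardy type operator acts directly on the scale $(0,1)$ with kernel $s^{1/n-1}=s^{-1/n'}$, the critical radius being calibrated by $\omega_n$ in place of $\omega_{n-1}^{n'}$.

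For the implication (embedding) $\Rightarrow$ (inequality), I would reproduce the extremal--function construction from the first half of the proof of Theorem~\ref{theorem: reduccion medida finita}. Assuming $I=(-a,b)$ and fixing $0<r<\min(a,b)$, for a nonnegative decreasing $f\in Z(I^n)$ with $\lambda_f(0)\le\omega_n r^n$ put $u(x)=\int_{\omega_n|x|^n}^{\omega_n r^n}s^{1/n-1}f^{*}(s)\,ds$ on $B_n(0,r)\subset I^n$ and $u=0$ elsewhere. A direct computation using \eqref{eq: radial function} gives $|\nabla u|^{*}\approx f^{*}$ and $u^{*}(t)=\int_t^1 s^{1/n-1}f^{*}(s)\,ds$ on $(0,1)$, so Lemma~\ref{lema: operador} (with $\beta=1/n-1>-1$) and the boundedness of the dilation operator yield $\|u\|_{W^1Z(I^n)}\lesssim\|f^{*}\|_{\overline Z(0,1)}$; the assumed embedding then produces exactly the claimed inequality for such $f$. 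The passage to an arbitrary $f\in Z(I^n)$ is carried out with the truncation splitting $f=f_1+f_2$ at the height $f^{*}(\omega_n r^n)$ used in Theorem~\ref{theorem: reduccion medida finita}, the bounded piece being absorbed by H\"older's inequality; and the extension from decreasing to arbitrary $f\in\overline Z(0,1)$ is automatic, since the kernel $s^{1/n-1}\chi_{\{s>t\}}$ is nonincreasing in each variable.

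For the converse I would mimic the second half of the proof of Theorem~\ref{theorem: reduccion medida finita}. Combining the optimal estimates $W^1L^1(I^n)\hookrightarrow L^{n',1}(I^n)$ (see~\eqref{eq: intro W1L1->Xop}) and $W^1L^{n,1}(I^n)\hookrightarrow L^{\infty}(I^n)$ with Theorem~\ref{theorem: interpolation operator} applied to the identity map gives $K(u,t;L^{n',1},L^{\infty})\lesssim K(Cu,Ct;W^1L^1,W^1L^{n,1})$ for $0<t<1$. Evaluating the left side by Holmstedt's formula (Theorem~\ref{holmfor}) and the right side by the Sobolev version \eqref{eq: Holmstedt sobolev} (with $p_0=q_0=1$, $p_1=n$, $q_1=1$, hence $\alpha=n'$), and repeating the change of variables used to pass to \eqref{eq: 13W1Z->R(X,Loo)}, one arrives at $\int_0^t u^{*}(v^{n'})\,dv\lesssim\int_0^t\big(\int_{Cs^{n'}}^1 y^{1/n-1}|D^1u|^{*}(y)\,dy\big)ds$ for all $t\in(0,1)$. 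Both integrands being nonincreasing, the Hardy--Littlewood--P\'olya principle, the boundedness of the dilation operator, and the hypothesis (applied to $f=|D^1u|^{*}$) then give $\|u\|_{Y(I^n)}\lesssim\||D^1u|^{*}\|_{\overline Z(0,1)}\approx\|u\|_{W^1Z(I^n)}$, i.e.\ statement (i).

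The main obstacle is to make this last reduction sharp. A crude combination of the two anchor embeddings is too lossy; one has to use the DeVore--Scherer $K$-functional \eqref{eq: Kfunctional Sobolev} in its precise form, equivalently the standard pointwise symmetrization estimate $u^{*}(t)\lesssim u^{**}(1)+\int_t^1 y^{1/n-1}|\nabla u|^{*}(y)\,dy$ for $u\in W^1L^1(I^n)$, in which the ``level'' term $u^{**}(1)=\|u\|_{L^1(I^n)}$ is absorbed using axioms (A4) and (A5). Crucially the gradient must enter through its decreasing rearrangement $|\nabla u|^{*}$, not $|\nabla u|^{**}$: the condition formulated with the latter is strictly stronger than the one in the statement (it already fails, for instance, for $Z=L^1$ and $Y=L^{n',q}$, $1\le q<\infty$, whereas the stated condition holds there). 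Once this sharp pointwise control is secured, the change of variables, the Hardy--Littlewood--P\'olya comparison and the appeal to the hypothesis are all routine.
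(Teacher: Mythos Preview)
The paper does not give its own proof of this statement: Theorem~\ref{theorem: reduccion Kerman-Pick} is quoted verbatim from Kerman and Pick~\cite{Kerman-Pick} and used as a black box in Section~\ref{sec: Comparison with the optimal r.i. space}. So there is no in-paper argument to compare your proposal against.

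Your proposal is essentially the standard proof and is correct, but with one caveat worth making explicit. The implication (embedding) $\Rightarrow$ (inequality) via the radial test function $u(x)=\int_{\omega_n|x|^n}^{\omega_n r^n}s^{1/n-1}f^{*}(s)\,ds$ is exactly the right adaptation of the first half of Theorem~\ref{theorem: reduccion medida finita} and goes through without change.

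For the converse, however, the interpolation route you sketch first does not close as written. After Holmstedt you obtain
\[
\int_0^t u^{*}(v^{n'})\,dv \;\lesssim\; \int_0^t\Big(\int_{Cs^{n'}}^{1} y^{1/n-1}|D^1u|^{*}(y)\,dy\Big)ds,
\]
and the Hardy--Littlewood--P\'olya principle in $\overline{Y}(0,1)$ then controls $\|u^{*}(\cdot^{\,n'})\|_{\overline{Y}}$, not $\|u^{*}\|_{\overline{Y}}=\|u\|_{Y}$; simultaneously, the right-hand side carries the lower limit $s^{n'}$, whereas the hypothesis is stated with lower limit $t$. Neither side matches the target inequality, and a power change of variables is not a dilation, so the appeal to boundedness of $E_t$ does not bridge the gap. (This is precisely the place where the mixed-norm proof of Theorem~\ref{theorem: reduccion medida finita} and the r.i.\ proof diverge: in Theorem~\ref{theorem: reduccion medida finita} the target norm already lives on the $(n{-}1)$-dimensional scale, so the $s^{n'}$ in the Hardy operator is the correct calibration; here it is not.)

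You recognise this yourself in the last paragraph and supply the correct remedy: the pointwise symmetrization inequality
\[
u^{*}(t)\;\lesssim\; u^{**}(1)+\int_t^1 s^{1/n-1}|\nabla u|^{*}(s)\,ds,\qquad 0<t<1,
\]
for $u\in W^{1}L^{1}(I^n)$, followed by axioms (A4)--(A5) to absorb the constant term and the hypothesis applied to $f=|\nabla u|^{*}$. This is exactly how Kerman and Pick proceed, and once you commit to it the argument is complete. I would therefore drop the $(L^{n',1},L^\infty)$ interpolation detour altogether and present the pointwise estimate as the main step, rather than as a fallback.
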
 

\begin{remark}\label{remark: Kerman-Pick}\textnormal{We would also like to emphasize that Theorem~\ref{theorem: reduccion Kerman-Pick} was  used  in  \cite{Kerman-Pick}  to prove Sobolev estimates as well as to give the following characterization  of the optimal range space when the domain space is given:
\begin{align}\label{eq: optimal range Kerman-Pick}
(X^{\textnormal{op}})^{\prime}(I^{n})=\Big\{f\in\mathcal{M}(I^{n}): \|f\|_{(X^{\textnormal{op}})^{\prime}(I^{n})}=\big\|t^{1/n}f^{**}(t)\big\|_{\overline{Z}^{\prime}(0,1)}<\infty\Big\}.
\end{align}
As a consequence, for instance, they recovered   the classical estimates by  Poornima~\cite{Poornima}, O'Neil~\cite{Oneil-1963} and Peetre~\cite{Peetre}
 \begin{align}\label{eq: intro WLp->Xop}
W^{1}L^{p}(I^{n})\hookrightarrow L^{np/(n-1p),p}(I^{n}),
\end{align}
and  the so-called limiting or critical case of Sobolev embedding due to  Hansson~\cite{Hansson},  Brezis and Wainger~\cite{Brezis-Wainger} and Maz'ya~\cite{Mazya}
\begin{align}\label{eq: intro WLn->Xop}
W^{1}L^{n}(I^{n})\hookrightarrow L^{\infty,n;-1}(I^{n}).
\end{align}
Furthermore, as a new contribution, the authors showed that the range spaces $L^{np/(n-p),p}(I^{n})$ and $L^{\infty,n;-1}(I^{n})$ in \eqref{eq: intro WLp->Xop} and \eqref{eq: intro WLn->Xop} respectively, are the best possible among r.i.\ spaces. We now see that we can further improve these results.}
\end{remark}

\begin{theorem}\label{theorem: R(X,Loo)->Lubos} Let $Z(I^{n})$ be an r.i.\ space, let $X^{\textnormal{op}}(I^{n})$ be the optimal r.i.\   space in \eqref{eq: Lubos optimo} and let $\mathcal{R}(X_{W^{1}Z,L^{\infty}},L^{\infty})$ be the smallest  space of the form $\mathcal{R}(X,L^{\infty})$ that verifies \eqref{eq: mixtas optimo}. Then,
		\begin{align*}
W^{1}Z(I^{n})\hookrightarrow \mathcal{R}(X_{W^{1}Z,L^{\infty}},L^{\infty}) \hookrightarrow X^{\textnormal{op}}(I^{n}).
	\end{align*}
	Moreover, $X^{\textnormal{op}}(I^{n})$ is the smallest r.i.\ space that verifies
	\begin{align*}
	 \mathcal{R}(X_{W^{1}Z,L^{\infty}},L^{\infty})\hookrightarrow  X^{\textnormal{op}}(I^{n}).
	\end{align*}
\end{theorem}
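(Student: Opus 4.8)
The plan is to prove the two embeddings separately, and then the optimality of $X^{\textnormal{op}}(I^{n})$ as the smallest r.i.\ space containing the mixed norm space, using the Hardy-operator characterizations established earlier.

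\textbf{First embedding.} The embedding $W^{1}Z(I^{n})\hookrightarrow \mathcal{R}(X_{W^{1}Z,L^{\infty}},L^{\infty})$ is exactly the content of Theorem~\ref{teo: W1Z->R(X,Loo) rango}, so nothing new is needed here; I would simply cite it.

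\textbf{Second embedding.} For $\mathcal{R}(X_{W^{1}Z,L^{\infty}},L^{\infty})\hookrightarrow X^{\textnormal{op}}(I^{n})$, I would invoke Theorem~\ref{teo: rango R(X,Loo)->Z}: the smallest r.i.\ space containing $\mathcal{R}(X_{W^{1}Z,L^{\infty}},L^{\infty})$ is $Z_{\mathcal{R}(X_{W^{1}Z,L^{\infty}},L^{\infty})}(I^{n})$, whose norm is $\|f^{*}(t^{n^{\prime}})\|_{\overline{X}_{W^{1}Z,L^{\infty}}(0,1)}$. Hence it suffices to show $Z_{\mathcal{R}(X_{W^{1}Z,L^{\infty}},L^{\infty})}(I^{n})\hookrightarrow X^{\textnormal{op}}(I^{n})$, which by associate-space duality is equivalent to $(X^{\textnormal{op}})^{\prime}(I^{n})\hookrightarrow (Z_{\mathcal{R}(X_{W^{1}Z,L^{\infty}},L^{\infty})})^{\prime}(I^{n})$. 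Using the description \eqref{eq: optimal range Kerman-Pick} of $(X^{\textnormal{op}})^{\prime}$ via the functional $\|t^{1/n}f^{**}(t)\|_{\overline{Z}^{\prime}(0,1)}$, and the description of the associate norm of $Z_{\mathcal{R}(X,L^{\infty})}$ (which, by a change of variables $t\mapsto t^{1/n^{\prime}}$, is governed by $\|f^{**}(t^{1/n^{\prime}})\|_{\overline{X}_{W^{1}Z,L^{\infty}}^{\prime}(0,1)}$, i.e.\ precisely the defining norm of $Y(I^{n-1})=X^{\prime}_{W^{1}Z,L^{\infty}}(I^{n-1})$ from \eqref{eq: 1W1Z->R(X,Loo) norm}), the required inequality reduces to a pointwise/operator comparison. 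Concretely, I would fix $f$ with $\|t^{1/n}f^{**}(t)\|_{\overline{Z}^{\prime}(0,1)}\le 1$, and estimate $\|f^{**}(t^{1/n^{\prime}})\|_{\overline{Z}^{\prime}(0,1)}$ by writing $f^{**}(t^{1/n^{\prime}})\le \big(t^{1/n^{\prime}}\big)^{-1}\int_{0}^{t^{1/n^{\prime}}}f^{*}$ and invoking the boundedness of the dilation operator together with a Hardy-type estimate (Lemma~\ref{lema: operador} or the Hardy inequality), exploiting that the weight $t^{1/n}$ dominates $t^{1/n^{\prime}}/t = t^{-1/n}$ appropriately after the substitution. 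Alternatively, and perhaps more cleanly, one can combine Theorem~\ref{theorem: reduccion medida finita} with Theorem~\ref{theorem: reduccion Kerman-Pick}: the former, applied with $X=X_{W^{1}Z,L^{\infty}}$, gives boundedness of $f^{*}\mapsto \int_{t^{n^{\prime}}}^{1}s^{-1/n^{\prime}}f^{*}(s)\,ds$ from $\overline{Z}(0,1)$ to $\overline{X}_{W^{1}Z,L^{\infty}}(0,1)$, while the latter characterizes $W^{1}Z\hookrightarrow X^{\textnormal{op}}$ by boundedness of $f\mapsto \int_{t}^{1}s^{1/n-1}f(s)\,ds$ on the corresponding pair. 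One shows the second operator factors through (is dominated by) the first composed with the embedding $Z_{\mathcal{R}(X_{W^{1}Z,L^{\infty}},L^{\infty})}\hookrightarrow$ (something), using the identity $\int_{t}^{1}s^{1/n-1}g(s)\,ds \approx \int_{t^{n^{\prime}}}^{1}u^{-1/n^{\prime}}g(u^{1/n^{\prime}})\,du$ after the change of variable $s=u^{1/n^{\prime}}$.

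\textbf{Optimality.} For the final claim that $X^{\textnormal{op}}(I^{n})$ is the smallest r.i.\ space with $\mathcal{R}(X_{W^{1}Z,L^{\infty}},L^{\infty})\hookrightarrow X^{\textnormal{op}}(I^{n})$: suppose $Y(I^{n})$ is an r.i.\ space with $\mathcal{R}(X_{W^{1}Z,L^{\infty}},L^{\infty})\hookrightarrow Y(I^{n})$. Composing with the already-established first embedding of the theorem gives $W^{1}Z(I^{n})\hookrightarrow Y(I^{n})$. But $X^{\textnormal{op}}(I^{n})$ is by definition (Kerman--Pick, see \eqref{eq: Lubos optimo}) the smallest r.i.\ space satisfying $W^{1}Z(I^{n})\hookrightarrow X^{\textnormal{op}}(I^{n})$, so $X^{\textnormal{op}}(I^{n})\hookrightarrow Y(I^{n})$, which is exactly the asserted minimality.

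\textbf{Main obstacle.} The delicate point is the second embedding: making the change-of-variables/Hardy-operator comparison rigorous at the level of associate norms, i.e.\ verifying that $\|f^{**}(t^{1/n^{\prime}})\|_{\overline{Z}^{\prime}(0,1)}\lesssim \|t^{1/n}f^{**}(t)\|_{\overline{Z}^{\prime}(0,1)}$ (equivalently the dual operator inequality on $\overline{Z}(0,1)$), which encodes the genuine improvement that the mixed norm space is strictly inside $X^{\textnormal{op}}$. I expect this to follow from the boundedness of dilations on $\overline{Z}^{\prime}(0,1)$ together with the elementary pointwise bound relating $f^{**}(t^{1/n^{\prime}})$ and an averaged version of $t^{1/n}f^{**}(t)$, but some care with the exponents $1/n$ versus $1/n^{\prime}$ and the direction of the resulting Hardy inequality is required.
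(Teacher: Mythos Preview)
Your first embedding and your optimality argument are both correct and match the paper's reasoning. The problem lies in your approach to the second embedding $\mathcal{R}(X_{W^{1}Z,L^{\infty}},L^{\infty})\hookrightarrow X^{\textnormal{op}}(I^{n})$.

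The associate-norm description you give for $Z_{\mathcal{R}(X_{W^{1}Z,L^{\infty}},L^{\infty})}(I^{n})$ is not correct: its associate norm is \emph{not} $\|f^{**}(t^{1/n'})\|_{\overline{X}'_{W^{1}Z,L^{\infty}}(0,1)}$, nor does it reduce to the $X'_{W^{1}Z,L^{\infty}}$-norm $\|f^{**}(t^{1/n'})\|_{\overline{Z}'(0,1)}$. More seriously, the inequality you isolate as the ``main obstacle'',
\[
\|f^{**}(t^{1/n'})\|_{\overline{Z}'(0,1)}\lesssim \|t^{1/n}f^{**}(t)\|_{\overline{Z}'(0,1)},
\]
is \emph{false} in general. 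Take $Z=L^{p}$ with $1<p<n$ and test against $f^{**}(t)=t^{-\alpha}$: the right-hand side is finite for $\alpha<1/n+1/p'$, while the left-hand side requires $\alpha<n'/p'$, and one checks that $n'/p'<1/n+1/p'$ whenever $p<n$. So there are functions making the right side finite and the left side infinite. Your reduction therefore cannot work; the difficulty is not one of ``care with exponents'' but a wrong target inequality, stemming from the incorrect associate-space identification.

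The paper proceeds differently. It estimates $\|f\|_{X^{\textnormal{op}}}$ directly by duality, inserting the pointwise bound $g^{*}(t)\le t^{-1/n}\sup_{t<s<1}s^{1/n}g^{*}(s)$, performing the change of variable $t\mapsto t^{n'}$, and applying H\"older's inequality in the pair $(\overline{X}_{W^{1}Z,L^{\infty}},\overline{X}'_{W^{1}Z,L^{\infty}})$. This produces the factor $\|f^{*}(t^{n'})\|_{\overline{X}_{W^{1}Z,L^{\infty}}}$, which is exactly the $Z_{\mathcal{R}(X_{W^{1}Z,L^{\infty}},L^{\infty})}$-norm from Theorem~\ref{teo: rango R(X,Loo)->Z}. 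The remaining factor is $\bigl\|\sup_{t^{n'}<s<1}s^{1/n}g^{*}(s)\bigr\|_{\overline{X}'_{W^{1}Z,L^{\infty}}}$, which, after unwinding the definition of $X'_{W^{1}Z,L^{\infty}}$, becomes $\|t^{-1/n}\sup_{t<s<1}s^{1/n}g^{*}(s)\|_{\overline{(X^{\textnormal{op}})'}}$. The key technical input you are missing is then a result from Kerman--Pick (their Remark~3.11) asserting that this last quantity is controlled by $\|g\|_{(X^{\textnormal{op}})'}$; this is what closes the argument and is not recoverable from dilation boundedness or a simple Hardy inequality.
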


\begin{proof} We fix $\mathcal{R}(X_{W^{1}Z,L^{\infty}},L^{\infty}).$ Using now Theorem \ref{teo: rango R(X,Loo)->Z}, we construct the smallest r.i.\ space, denoted by $Y_{\mathcal{R}(X_{W^{1}Z,L^{\infty}},L^{\infty})}(I^{n}),$  that verifies 
  	\begin{align}\label{eq: 2R(X,Loo)->Lubos}
\mathcal{R}(X_{W^{1}Z,L^{\infty}},L^{\infty}) \hookrightarrow Y_{\mathcal{R}(X_{W^{1}Z,L^{\infty}},L^{\infty})}(I^{n}).
\end{align}
Then, by \eqref{eq: mixtas optimo}, it follows that
\begin{align*}
W^{1}Z(I^{n})\hookrightarrow Y_{\mathcal{R}(X_{W^{1}Z,L^{\infty}},L^{\infty})}(I^{n}),
\end{align*}
and hence, our assumption on $X^{\textnormal{op}}(I^{n})$ implies that
\begin{align}\label{eq: 3R(X,Loo)->Lubos}
  X^{\textnormal{op}}(I^{n})\hookrightarrow Y_{\mathcal{R}(X_{W^{1}Z,L^{\infty}},L^{\infty})}(I^{n}).
\end{align}

On the other hand, if $f\in  X^{\textnormal{op}}(I^{n}),$ then, using a change of variables, we get
\begin{align*}
\|f\|_{X^{\textnormal{op}}(I^{n})}&=\sup_{\|g\|_{(X^{\textnormal{op}})^{\prime}(I^{n})}\leq 1}\int^{1}_{0}f^{*}(t)g^{*}(t)dt\\
&\leq \sup_{\|g\|_{(X^{\textnormal{op}})^{\prime}(I^{n})}\leq 1}\int^{1}_{0}f^{*}(t)\,t^{-1/n}\sup_{t<s<1}s^{1/n}g^{*}(s)dt\\
&\approx \sup_{\|g\|_{(X^{\textnormal{op}})^{\prime}(I^{n})}\leq 1} \int^{1}_{0}f^{*}(t^{n^{\prime}})\sup_{t^{n^{\prime}}<s<1}s^{1/n}g^{*}(s)dt
\end{align*}
and hence,  by  H\"older's inequality, we obtain that 
\begin{align}\label{eq: 4R(X,Loo)->Lubos}
\|f\|_{X^{\textnormal{op}}(I^{n})}\lesssim  \sup_{\|g\|_{(X^{\textnormal{op}})^{\prime}(I^{n})}\leq 1}&\|f^{*}(t^{n^{\prime}})\|_{\overline{X}_{W^{1}Z,L^{\infty}}(0,1)}\bigg\|\sup_{t^{n^{\prime}}<s<1}s^{1/n}g^{*}(s)\bigg\|_{\overline{X}^{\prime}_{W^{1}Z,L^{\infty}}(0,1)}.\end{align}
But, combining Theorem~\ref{teo: W1Z->R(X,Loo) rango} and \eqref{eq: optimal range Kerman-Pick}, we have that
\begin{align*}
\bigg\|\sup_{t^{n^{\prime}}<s<1}s^{1/n}g^{*}(s)\bigg\|_{\overline{X}^{\prime}_{W^{1}Z,L^{\infty}}(0,1)}&\approx\bigg\|t^{-1/n^{\prime}}\int^{t}_{0}y^{-1/n}\sup_{y<s<1}s^{1/n}g^{*}(s)dy\bigg\|_{\overline{Z}^{\prime}(0,1)}\\
&=\|t^{-1/n}\sup_{t<s<1}s^{1/n}g^{*}(s)\|_{\overline{X^{\textnormal{op}}}^{\prime}(0,1)},
\end{align*}
and so, by \cite[Remark~3.11]{Kerman-Pick}, we deduce that
\begin{align*}
\bigg\|\sup_{t^{n^{\prime}}<s<1}s^{1/n}g^{*}(s)\bigg\|_{\overline{X}^{\prime}_{W^{1}Z,L^{\infty}}(0,1)}\lesssim \|g\|_{(X^{\textnormal{op}})^{\prime}(I^{n})}.
\end{align*}
Therefore, using this fact  and \eqref{eq: 4R(X,Loo)->Lubos}, we get
\begin{align*}
\|f\|_{X^{\textnormal{op}}(I^{n})}\lesssim \|f^{*}(t^{n^{\prime}})\|_{\overline{X}_{W^{1}Z,L^{\infty}}(0,1)}.
\end{align*}
 and hence, by Theorem \ref{teo: rango R(X,Loo)->Z}, we conclude that 
\begin{align*}
\|f\|_{X^{\textnormal{op}}(I^{n})}\lesssim \|f\|_{Y_{\mathcal{R}(X_{W^{1}Z,L^{\infty}},L^{\infty})}(I^{n})}, 
\end{align*}
 from which it follows that
 \begin{align}\label{eq: 5R(X,Loo)->Lubos}
  Y_{\mathcal{R}(X_{W^{1}Z,L^{\infty}},L^{\infty})}(I^{n})\hookrightarrow X^{\textnormal{op}}(I^{n}).
 \end{align}
As a consequence, combining \eqref{eq: 3R(X,Loo)->Lubos} and \eqref{eq: 5R(X,Loo)->Lubos} yields
 \begin{align*}
   X^{\textnormal{op}}(I^{n})=Y_{\mathcal{R}(X_{W^{1}Z,L^{\infty}},L^{\infty})}(I^{n}),
 \end{align*}
 and so,  \eqref{eq: mixtas optimo} and  \eqref{eq: 2R(X,Loo)->Lubos} imply that
 		\begin{align*}
W^{1}Z(I^{n})\hookrightarrow \mathcal{R}(X_{W^{1}Z,L^{\infty}},L^{\infty}) \hookrightarrow X^{\textnormal{op}}(I^{n})=Y_{\mathcal{R}(X_{W^{1}Z,L^{\infty}},L^{\infty})}(I^{n}),
	\end{align*}
as we wanted to show.
 \end{proof}

As a consequence of Theorem~\ref{theorem: R(X,Loo)->Lubos}, we shall see  that  the classical estimates for the standard Sobolev space $W^{1}L^{p}(I^{n})$ by Poornima~\cite{Poornima}, O'Neil~\cite{Oneil-1963} and Peetre~\cite{Peetre} $(1\leq p < n)$, and by Hansson~\cite{Hansson} and Brezis and Wainger~\cite{Brezis-Wainger} and  Maz'ya~\cite{Mazya} ($p=n$) can be improved  considering mixed norms on the target spaces.

\begin{corollary}\label{corollary: W1Lp->R(X,Loo)->Lubos} Let $1\leq p<n.$ Then, 
\begin{align*}
W^{1}L^{p}(I^{n})\hookrightarrow \mathcal{R}(L^{p(n-1)/(n-p),p}, L^{\infty})\underset{\neq}{\hookrightarrow}L^{pn/(n-p),p}(I^{n}).
\end{align*}
\end{corollary}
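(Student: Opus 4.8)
The plan is to combine the optimal mixed-norm range embedding obtained in Corollary~\ref{corollary: W1Lp->R(X,Loo) rango} with the smallest r.i.\ space containing that mixed-norm space, as provided by Theorem~\ref{teo: rango R(X,Loo)->Z}; the strictness is then read off from Theorem~\ref{teo: mixta inclusion Z->R(X,Loo)}. First I would invoke Corollary~\ref{corollary: W1Lp->R(X,Loo) rango}, which gives
\begin{align*}
W^{1}L^{p}(I^{n})\hookrightarrow \mathcal{R}(L^{p(n-1)/(n-p),p},L^{\infty}),
\end{align*}
with $\mathcal{R}(L^{p(n-1)/(n-p),p},L^{\infty})$ being the smallest space of the form $\mathcal{R}(X,L^{\infty})$ with this property. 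Next I would apply Theorem~\ref{teo: rango R(X,Loo)->Z} with $X(I^{n-1})=L^{p(n-1)/(n-p),p}(I^{n-1})$: the smallest r.i.\ space into which this mixed-norm space embeds is $Z_{\mathcal{R}(X,L^{\infty})}(I^{n})$, characterized by the norm $\|f\|=\|f^{*}(t^{n^{\prime}})\|_{\overline{X}(0,1)}$.

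The main computational step is then to identify $Z_{\mathcal{R}(L^{p(n-1)/(n-p),p},L^{\infty})}(I^{n})$ explicitly as $L^{pn/(n-p),p}(I^{n})$. I would do this by writing out
\begin{align*}
\|f^{*}(t^{n^{\prime}})\|_{\overline{X}(0,1)}=\bigg(\int^{1}_{0}\bigl[s^{(n-p)/(p(n-1))-1/p}f^{*}(s^{n^{\prime}})\bigr]^{p}\,ds\bigg)^{1/p},
\end{align*}
and performing the change of variables $u=s^{n^{\prime}}$ (so $ds\approx u^{1/n^{\prime}-1}du$ up to the constant $1/n^{\prime}$), which transforms the exponent into exactly the one defining $\|f\|_{L^{pn/(n-p),p}(0,1)}$; a short bookkeeping check on the exponents $\bigl(\tfrac{n-p}{p(n-1)}-\tfrac1p\bigr)n^{\prime}+\tfrac1{n^{\prime}}-1 = \tfrac{n-p}{pn}-\tfrac1p$ confirms the identification. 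Alternatively, one can note that $Z_{\mathcal{R}(X,L^\infty)}$ is an r.i.\ space whose norm depends only on $f^*$, invoke the Luxemburg representation, and identify it with a Lorentz space directly; but the change-of-variables route is the cleanest. This yields the embedding
\begin{align*}
\mathcal{R}(L^{p(n-1)/(n-p),p},L^{\infty})\hookrightarrow L^{pn/(n-p),p}(I^{n}),
\end{align*}
and combined with Corollary~\ref{corollary: W1Lp->R(X,Loo) rango} gives the full chain.

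For strictness, the hard part conceptually, I would argue that $\mathcal{R}(L^{p(n-1)/(n-p),p},L^{\infty})$ is not an r.i.\ space: by the discussion following Theorem~\ref{teo: mixta inclusion Z->R(X,Loo)}, $\mathcal{R}(X,L^{\infty})$ is an r.i.\ space if and only if $X(I^{n-1})=L^{\infty}(I^{n-1})$, which fails here since $p(n-1)/(n-p)<\infty$. Since $L^{pn/(n-p),p}(I^{n})$ \emph{is} an r.i.\ space, the two spaces cannot coincide, so the embedding $\mathcal{R}(L^{p(n-1)/(n-p),p},L^{\infty})\hookrightarrow L^{pn/(n-p),p}(I^{n})$ must be strict. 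I expect the only real subtlety is the exponent bookkeeping in the change of variables (and handling the easy case $p=1$ separately, where $L^{p(n-1)/(n-p),p}=L^{n^{\prime},1}$ and the target is $L^{n^{\prime},1}$, using Fournier's classical estimate directly), everything else being a direct citation of the results established above.
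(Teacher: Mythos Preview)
Your approach is correct but takes a different route from the paper. The paper's proof is a one-line application of the section's main comparison result, Theorem~\ref{theorem: R(X,Loo)->Lubos}: one identifies $X^{\textnormal{op}}(I^{n})=L^{pn/(n-p),p}(I^{n})$ via the Kerman--Pick machinery (Remark~\ref{remark: Kerman-Pick}), identifies the optimal mixed-norm range via Corollary~\ref{corollary: W1Lp->R(X,Loo) rango}, and then Theorem~\ref{theorem: R(X,Loo)->Lubos} immediately yields the chain; strictness comes from Theorem~\ref{teo: mixta inclusion Z->R(X,Loo)}. You instead bypass both Theorem~\ref{theorem: R(X,Loo)->Lubos} and the Kerman--Pick result entirely, computing the smallest r.i.\ space containing $\mathcal{R}(L^{p(n-1)/(n-p),p},L^{\infty})$ directly from Theorem~\ref{teo: rango R(X,Loo)->Z} via a change of variables. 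This is more elementary and self-contained, but it forgoes the conceptual payoff of Section~\ref{sec: Comparison with the optimal r.i. space}, namely that the r.i.\ space you land in is \emph{exactly} the Kerman--Pick optimal range $X^{\textnormal{op}}$. Your strictness argument (the mixed-norm space is not r.i.\ unless $X=L^{\infty}$) is essentially the same as the paper's.

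Two minor slips worth fixing. First, your displayed exponent identity is not quite right: after substituting $u=s^{n'}$ in $\int_{0}^{1}s^{\alpha p}f^{*}(s^{n'})^{p}\,ds$ with $\alpha=\tfrac{n-p}{p(n-1)}-\tfrac{1}{p}$, the correct bookkeeping is $\tfrac{\alpha p}{n'}+\tfrac{1}{n'}-1=-\tfrac{p}{n}$, which matches the $L^{pn/(n-p),p}$ exponent; your formula multiplies by $n'$ instead of dividing. Second, in the $p=1$ parenthetical you have the spaces swapped: for $p=1$ one gets $L^{p(n-1)/(n-p),p}=L^{1,1}=L^{1}$ (it is the \emph{target} that equals $L^{n',1}$). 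In any case the change-of-variables argument works uniformly for $1\le p<n$, so no separate treatment is needed.
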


\begin{proof}If $Z(I^{n})=L^{p}(I^{n}),$ with $1\leq p<n,$ then,  by Remark~\ref{remark: Kerman-Pick} and Corollary~\ref{corollary: W1Lp->R(X,Loo) rango}, we have
\begin{align*}
X^{\textnormal{op}}(I^{n})=L^{pn/(n-p),p}(I^{n})\ \   \textnormal{and}\  \   \mathcal{R}(X_{W^{1}Z,L^{\infty}},L^{\infty})=\mathcal{R}(L^{p(n-1)/(n-p),p}, L^{\infty}).
\end{align*}
Therefore, using Theorem~\ref{theorem: R(X,Loo)->Lubos} and Theorem~\ref{teo: mixta inclusion Z->R(X,Loo)}  , the result follows immediately.
\end{proof}

\begin{corollary} $W^{1}L^{n}(I^{n})\hookrightarrow \mathcal{R}(L^{\infty,n ;-1},L^{\infty})\underset{\neq}{\hookrightarrow}L^{\infty,n ;-1}(I^{n}).$
\end{corollary}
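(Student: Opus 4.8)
The plan is to mimic exactly the argument of Corollary~\ref{corollary: W1Lp->R(X,Loo)->Lubos}, now in the limiting case $p=n$, drawing the two endpoints of the chain from results already established for $Z(I^{n})=L^{n}(I^{n})$. First I would invoke Remark~\ref{remark: Kerman-Pick}: taking $Z(I^{n})=L^{n}(I^{n})$ in \eqref{eq: optimal range Kerman-Pick}, the optimal r.i.\ range space in \eqref{eq: Lubos optimo} is $X^{\textnormal{op}}(I^{n})=L^{\infty,n;-1}(I^{n})$, which is precisely the Hansson--Brezis--Wainger--Maz'ya embedding \eqref{eq: intro WLn->Xop}. Next, Corollary~\ref{corollary: W1Ln->R(X,Loo) rango} identifies the smallest mixed norm space of the form $\mathcal{R}(X,L^{\infty})$ satisfying \eqref{eq: mixtas optimo}, namely $\mathcal{R}(X_{W^{1}L^{n},L^{\infty}},L^{\infty})=\mathcal{R}(L^{\infty,n;-1},L^{\infty})$.

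With both spaces computed, I would feed them into Theorem~\ref{theorem: R(X,Loo)->Lubos}, which immediately yields the chain
\begin{align*}
W^{1}L^{n}(I^{n})\hookrightarrow \mathcal{R}(L^{\infty,n;-1},L^{\infty})\hookrightarrow L^{\infty,n;-1}(I^{n}).
\end{align*}
It then only remains to check that the second embedding is strict, i.e.\ that $\mathcal{R}(L^{\infty,n;-1},L^{\infty})$ and $L^{\infty,n;-1}(I^{n})$ are not the same space (with equivalent norms). This is where Theorem~\ref{teo: mixta inclusion Z->R(X,Loo)} enters: if equality held, then in particular $L^{\infty,n;-1}(I^{n})\hookrightarrow\mathcal{R}(L^{\infty,n;-1},L^{\infty})$, forcing $L^{\infty,n;-1}(I^{n})=L^{\infty}(I^{n})$. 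But $L^{\infty,n;-1}(I^{n})\neq L^{\infty}(I^{n})$, since for $0<\gamma<1-1/n$ the function with $f^{*}(t)\approx[1+\log(1/t)]^{\gamma}$ is unbounded yet lies in $L^{\infty,n;-1}(I^{n})$ (a one-line computation with the substitution $u=\log(1/t)$). This contradiction gives strictness.

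The proof involves essentially no new obstacle; the only point requiring a brief verification is the noncoincidence $L^{\infty,n;-1}(I^{n})\neq L^{\infty}(I^{n})$, and the slightly delicate bookkeeping is making sure that the strictness assertion is deduced from Theorem~\ref{teo: mixta inclusion Z->R(X,Loo)} rather than attempted by exhibiting an explicit separating function (which, while possible, is longer). Everything else is a direct citation of Remark~\ref{remark: Kerman-Pick}, Corollary~\ref{corollary: W1Ln->R(X,Loo) rango}, and Theorem~\ref{theorem: R(X,Loo)->Lubos}.
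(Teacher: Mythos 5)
Your proposal is correct and follows essentially the same route as the paper, which simply says to repeat the argument of Corollary~\ref{corollary: W1Lp->R(X,Loo)->Lubos} with Corollary~\ref{corollary: W1Ln->R(X,Loo) rango} in place of Corollary~\ref{corollary: W1Lp->R(X,Loo) rango}: identify $X^{\textnormal{op}}=L^{\infty,n;-1}(I^{n})$ via Remark~\ref{remark: Kerman-Pick}, identify the optimal mixed norm space, apply Theorem~\ref{theorem: R(X,Loo)->Lubos} for the chain, and deduce strictness from Theorem~\ref{teo: mixta inclusion Z->R(X,Loo)}. Your explicit verification that $L^{\infty,n;-1}(I^{n})\neq L^{\infty}(I^{n})$ is a correct detail the paper leaves implicit.
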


\begin{proof}Use Corollary~\ref{corollary: W1Ln->R(X,Loo) rango} instead of Corollary~\ref{corollary: W1Lp->R(X,Loo) rango} and argue as in the proof of Corollary~\ref{corollary: W1Lp->R(X,Loo)->Lubos}.
\end{proof}

\noindent
{\bf Acknowledgments:} We would like to thank the referee for his/her careful revision which has improved the final version of this work.

\providecommand{\bysame}{\leavevmode\hbox to3em{\hrulefill}\thinspace}
\providecommand{\MR}{\relax\ifhmode\unskip\space\fi MR }
\providecommand{\MRhref}[2]{%
  \href{http://www.ams.org/mathscinet-getitem?mr=#1}{#2}
}
\providecommand{\href}[2]{#2}


\begin{thebibliography}{10}

\bibitem{Adams}
R.~A. Adams and J.~J.~F. Fournier, \emph{Sobolev Spaces}, second ed., Pure and
  Applied Mathematics (Amsterdam), vol. 140, Elsevier/Academic Press,
  Amsterdam, 2003.

\bibitem{Robert-Viktor}
R.~Algervik and V.~I. Kolyada, \emph{On {F}ournier-{G}agliardo mixed norm
  spaces}, Ann. Acad. Sci. Fenn. Math. \textbf{36} (2011), no.~2, 493--508.

\bibitem{Barza-Kaminska-Persson-Soria}
S.~Barza, A.~Kami{\'n}ska, L.~Persson, and J.~Soria, \emph{Mixed norm and
  multidimensional {L}orentz spaces}, Positivity \textbf{10} (2006), no.~3,
  539--554.

\bibitem{Benedek-Panzone}
A.~Benedek and R.~Panzone, \emph{The space {$L^{p}$}, with mixed norm}, Duke
  Math. J. \textbf{28} (1961), 301--324.
  
\bibitem{Bennett-Rudnick} 
C.~Bennett  and K. Rudnick, \emph{On Lorentz-Zygmund spaces}, Dissertationes Math. \textbf{175} (1980), 1--72.

\bibitem{Bennett}
C.~Bennett and R.~Sharpley, \emph{Interpolation of {O}perators}, Pure and
  Applied Mathematics, vol. 129, Academic Press Inc., Boston, MA, 1988.
  
\bibitem{Bergh}
J.~Bergh and J.~L{\"o}fstr{\"o}m, \emph{Interpolation Spaces. {A}n Introduction},
  Springer-Verlag, Berlin, 1976.

\bibitem{Blei-Fournier}
R.~C. Blei and J.~J.~F. Fournier, \emph{Mixed-norm conditions and {L}orentz
  norms}, Commutative harmonic analysis ({C}anton, {NY}, 1987), vol.~91, 1989,
  pp.~57--78.

\bibitem{Blozinski}
A.~P. Blozinski, \emph{Multivariate rearrangements and {B}anach function spaces
  with mixed norms}, Trans. Amer. Math. Soc. \textbf{263} (1981), 149--167.

\bibitem{Boccuto-Bukhvalov-Sambucini}
A.~Boccuto, A.~V. Bukhvalov, and A.~R. Sambucini, \emph{Some inequalities in
  classical spaces with mixed norms}, Positivity \textbf{6} (2002), 393--411.

\bibitem{Brezis}
H.~Brezis, \emph{Analyse Fonctionnelle}, Collection Math\'ematiques Appliqu\'es pour la Ma\^itrise, Masson, Paris, 1983.
  
 \bibitem{Brezis-Wainger}
H.~Brezis and S.~Wainger, \emph{A note on limiting cases of {S}obolev
  embeddings and convolution inequalities}, Comm. Partial Differential
  Equations \textbf{5} (1980), no.~7, 773--789.

\bibitem{Buhvalov}
A.~V. Buhvalov, \emph{Spaces with mixed norm}, Vestnik Leningrad. Univ. (1973),
  no.~19 Mat. Meh. Astronom. Vyp. 4, 5--12, 151.

\bibitem{Carro-Pick-Soria-Stepanov}
M.~J. Carro, L.~Pick, J.~Soria, and V.~D. Stepanov, \emph{On embeddings between
  classical {L}orentz spaces}, Math. Inequal. Appl. \textbf{4} (2001),
  397--428.
  
\bibitem{Cianchi}
A. Cianchi, \emph{Symmetrization and second-order {S}obolev inequalities}, Ann.
  Mat. Pura Appl. (4) \textbf{183} (2004), no.~1, 45--77.

  
 \bibitem{Clavero-Soria}
N.~Clavero and J.~Soria, \emph{Mixed norm spaces and rearrangement invariant estimates}, J. Math. Anal. Appl. \textbf{419} (2014), 878--903.

\bibitem{DeVore-Scherer}
R.~DeVore and K.~Scherer, \emph{Interpolation of linear operators on {S}obolev
  spaces}, Ann. of Math. \textbf{109} (1979), 583--599.

\bibitem{Edmunds-Kerman-Pick}
D.~E. Edmunds, R.~Kerman, and L.~Pick, \emph{Optimal {S}obolev imbeddings
  involving rearrangement-invariant quasinorms}, J. Funct. Anal. \textbf{170}
  (2000), 307--355.

\bibitem{Fournier}
J.~J.~F. Fournier, \emph{Mixed norms and rearrangements: {S}obolev's inequality
  and {L}ittlewood's inequality}, Ann. Mat. Pura Appl. \textbf{148} (1987),
  51--76.

\bibitem{Gagliardo}
E.~Gagliardo, \emph{Propriet\`a di alcune classi di funzioni in pi\`u
  variabili}, Ricerche Mat. \textbf{7} (1958), 102--137.
  
\bibitem{Hansson}
K.~Hansson, \emph{Imbedding theorems of {S}obolev type in potential theory},
  Math. Scand. \textbf{45} (1979), no.~1, 77--102.  

\bibitem{Holmstedt}
T.~Holmstedt, \emph{Interpolation of quasi-normed spaces}, Math. Scand.
  \textbf{26} (1970), 177--199.

\bibitem{Kerman-Pick}
R.~Kerman and L.~Pick, \emph{Optimal {S}obolev imbeddings}, Forum Math.
  \textbf{18} (2006), no.~4, 535--570.

\bibitem{Viktor2012}
V.~I. Kolyada, \emph{Iterated rearrangements and {G}agliardo-{S}obolev type
  inequalities}, J. Math. Anal. Appl. \textbf{387} (2012), 335--348.

\bibitem{Viktor2013}
\bysame, \emph{On {F}ubini type property in {L}orentz spaces}, Recent
  Advances in Harmonic Analysis and Applications \textbf{25} (2013), 171--179.

\bibitem{Mazya}
V.~Maz'ya, \emph{Sobolev Spaces}, Springer Series in Soviet Mathematics, Springer-Verlag, Berlin, 1985.

\bibitem{Milman-kfunctional-mixed-norm}
M. Milman, \emph{Notes on interpolation of mixed norm spaces and applications},
  Quart. J. Math. Oxford Ser. (2) \textbf{42} (1991), no.~167, 325--334.
  
\bibitem{Nirenberg}
L.~Nirenberg, \emph{On elliptic partial differential equations}, Ann. Scuola
  Norm. Sup. Pisa \textbf{13} (1959), 115--162.

\bibitem{Oneil-1963}
R.~O'Neil, \emph{Convolution operators and {$L(p,\,q)$} spaces}, Duke Math. J.
  \textbf{30} (1963), 129--142.

  \bibitem{Peetre}
J.~Peetre, \emph{Espaces d'interpolation et th\'eor\`eme de {S}oboleff}, Ann.
  Inst. Fourier (Grenoble) \textbf{16} (1966), no.~fasc. 1, 279--317.
  
    \bibitem{Poornima}
S.~Poornima, \emph{An embedding theorem for the Sobolev space $W^{1,1}$}, Bull. Sci. Math. (2) \textbf{107} (1983), no. 3, 253--259.

\bibitem{Sobolev}
S.~L. Sobolev, \emph{On a theorem of functional analysis}, Math. Sb.
  \textbf{46} (1938), 471--496, translated in Amer. Math. Soc. Transl.  \textbf{34}
  (1963), 39--68.

\bibitem{Talenti}
G.~Talenti, \emph{Inequalities in rearrangement invariant function spaces},
  Nonlinear analysis, function spaces and applications, {V}ol.\ 5 ({P}rague,
  1994), Prometheus, Prague, 1994, pp.~177--230.
  

\end{thebibliography}
\end{document}